\theoremstyle{plain}
\newtheorem{thrm}{Theorem}[section]
\newtheorem{lemma}[thrm]{Lemma}
\newtheorem{prop}[thrm]{Proposition}
\newtheorem{rmrk}[thrm]{Remark}
\newtheorem{dfn}[thrm]{Definition}
\numberwithin{equation}{section}
\begin{document}
\newcommand{\SL}{\mathcal L^{1,p}( D)}
\newcommand{\Lp}{L^p( Dega)}
\newcommand{\CO}{C^\infty_0( \Omega)}
\newcommand{\Rn}{\mathbb R^n}
\newcommand{\Rm}{\mathbb R^m}
\newcommand{\R}{\mathbb R}
\newcommand{\Om}{\Omega}
\newcommand{\Hn}{\mathbb H^n}
\newcommand{\aB}{\alpha B}
\newcommand{\eps}{\epsilon}
\newcommand{\BVX}{BV_X(\Omega)}
\newcommand{\p}{\partial}
\newcommand{\IO}{\int_\Omega}
\newcommand{\bG}{\mathbb{G}}
\newcommand{\bg}{\mathfrak g}
\newcommand{\bz}{\mathfrak z}
\newcommand{\bv}{\mathfrak v}
\newcommand{\Bux}{\mbox{Box}}
\newcommand{\ep}{\epsilon}
\newcommand{\dep}{\Delta_{H,p}}
\newcommand{\nh}{\nabla_H}
\newcommand{\Gp}{G_{\Om,p}}
\newcommand{\HH}{\mathbb H^1}

\title[Boundary behavior for $p$-harmonic functions, etc.]{Boundary behavior $p$-harmonic functions in  the Heisenberg group}

\author{Nicola Garofalo}
\address{Department of Mathematics\\Purdue University \\
West Lafayette, IN 47907} \email[Nicola
Garofalo]{garofalo@math.purdue.edu}
\thanks{First author supported in part by NSF Grant DMS-0701001}

\author{Nguyen Cong Phuc}
\address{Department of Mathematics \\
Louisiana State University\\
Baton Rouge, LA 70803-4918} \email[Nguyen Cong Phuc]{pcnguyen@math.lsu.edu}
\thanks{Second author supported in part by NSF Grant DMS-0901083}

\date{\today}

%
%
\keywords{}
\subjclass{}

\maketitle

\section{Introduction}\label{S:intro}

\vskip 0.2in

The study of the boundary behavior of nonnegative solutions of elliptic and parabolic pde's is a central subject with a long and rich history, see e.g. \cite{Ke}. Over the past few years the attention of several groups of workers in the field has increasingly focused on boundary value problems for a class of second order partial differential equations which arise from problems in geometry, several complex variables and also, and pre-eminently, in the applied sciences (e.g., robotics, neuroscience, financial mathematics). The relevant pde's, known as \emph{subelliptic equations}, display many challenging new aspects and typically, they fail to be elliptic at every point. 

Some interesting progress in the analysis of the boundary behavior of solutions to these equations has come with the works \cite{NS}, \cite{Citti}, \cite{D}, \cite{CG}, \cite{LU}, \cite{CGN2}, \cite{MM1}, \cite{MM2}, \cite{CGN3}. The prototypical situation is that of a graded nilpotent Lie group $\bG$ with a fixed sub-Laplacian
$\mathcal L = \sum_{i=1}^m X_i^2$ associated to an orthonormal basis
of the bracket generating layer of the Lie algebra. Given an open set
$\Om\subset \bG$, a distribution $u$ is called harmonic if $\mathcal L
u = 0$ in $\mathcal D'(\Om)$. By H\"ormander's hypoellipticity
theorem \cite{H} one has that every harmonic function is in fact
$C^\infty(\Om)$. Similarly to its classical counterpart a central problem is that of understanding the boundary behavior of nonnegative solutions of $\mathcal L u = 0$ in a given bounded open set $\Om \subset \bG$.  

This question poses remarkable new challenges with respect to the classical elliptic theory. On one hand, the vector fields $X_1,...,X_m$ satisfy nontrivial commutation relations and the commutators are, effectively, derivatives of higher order. This is reflected in the fact that the natural geometry attached to $\mathcal L$ is not the Riemannian geometry of the ambient manifold $\bG$, but a much more complicated nonisotropic one in which different directions in the tangent space weight in a different way, according to their order of commutation. A remarkable new aspect is then the interplay between the sub-Riemannian geometry associated with $\mathcal L$ and the nature of the boundary of the domain $\Om$. In this connection, those points of $\p \Om$ at which the vector fields $X_1,...,X_m$ become tangent to $\p \Om$ play a special role. At such points, which are known as \emph{characteristic points}, a hoist of new phenomena can occur and a solution of $\mathcal L u = 0$ can display behaviors quite different from classical harmonic functions.  

The subtle role of characteristic points first became apparent in the pioneering works of Fichera \cite{Fic1}, \cite{Fic2} (who first introduced the notion of characteristic set), Kohn-Nirenberg \cite{KN} and Bony \cite{B}. In his work  \cite{Jer}  on the Dirichlet problem in the Heisenberg group, D. Jerison first constructed an example of a smooth  (in fact, real analytic) domain for which the Dirichlet problem admits a Green function which, in the neighborhood of a characteristic point, is at most H\"older continuous up to the boundary, see also \cite{Jer2}. This is in sharp contrast with the classical elliptic theory, in which smooth data on smooth domains produce solutions which are smooth up to the boundary. 

In the papers \cite{CGN2}, \cite{CGN3} a complete solution of the Dirichlet problem was obtained for the class of the so-called ADP domains, i.e., domains which are \emph{admissible for the Dirichlet problem}. Such domains are defined by imposing that they be NTA (non-tangentially accessible) with respect to the sub-Riemannian distance associated with the vector fields $X_1,...,X_m$, and that furthermore they satisfy a uniform outer tangent ball condition reminiscent of that introduced by Poincar\'e in the classical setting \cite{P}. This second assumption was imposed to rule out D. Jerison's negative phenomenon mentioned above. For an extensive discussion of the various ramifications of these hypothesis we refer the reader to the paper \cite{CGN3}. The reader is also referred to Section \ref{S:nta} for the definitions of NTA, and ADP domains.

In this paper we initiate the study of the boundary behavior of nonnegative $p$-harmonic functions, i.e., weak solutions to the nonlinear equation 
\begin{equation}\label{pharm}
\mathcal L_p u \overset{def}{=} \sum_{i=1}^m X_i(|X u|^{p-2} X_i u) = 0,\ \ \
1<p<\infty.
\end{equation}
Equation \eqref{pharm} arises as the Euler-Lagrange equation of the $p$-energy in the Folland-Stein Sobolev embedding in \cite{Fo} and, similarly to its classical counterpart, the Euclidean $p$-Laplacian, it plays an important role in the analysis of sub-Riemannian spaces. In \eqref{pharm} we have indicated with $|Xu| = (\sum_{i=1}^m (X_iu)^2)^{1/2}$ the length of the sub-gradient of $u$.

The relevant geometric setting of the present paper is the
Heisenberg group $\Hn$, which is the simplest and perhaps most important example of a graded nilpotent Lie group of step two. For a detailed description of such group we refer the reader to Section \ref{S:cg}. There is a reason for which we confine our analysis to $\Hn$, rather then considering an arbitrary nilpotent Lie group, or even
more general settings, as it is done for instance in \cite{CGN3}.
Such reason will become apparent to the reader in the main body of the
paper, and is related to the considerable difficulties
connected with: 1) the nonlinear nature of \eqref{pharm}; 2) the present lacking of those tools which, in the classical setting, play an essential role in the analysis of the $p$-Laplacian. 

We will be primarily interested in the range $1<p\leq Q$, where $Q = 2n + 2$
is the homogeneous dimension of the Lie group $\Hn$, attached to the
non-isotropic dilations \eqref{dilHn} below. The range $p>Q$ is
also of interest in view of possible applications to the horizontal
$\infty$-Laplacian
\[
\mathcal L_\infty u = \sum_{i,j=1}^m u_{,ij} X_iu X_ju,
\]
where $u_{,ij} = \frac{1}{2}(X_iX_j u + X_j X_i u)$. However, in such range the proofs are completely analogous, perhaps a bit easier due to
the non singular nature of the corresponding fundamental solution
(incidentally, in view of the Morrey type theorem in \cite{GN}, for the domains in this paper functions in the
horizontal Sobolev space $W^{1,p}_H$ are automatically H\"older
continuous up to the boundary).

Our main objective is understanding, in the model situation of $\Hn$, the boundary behavior of those nonnegative  weak solutions of
\eqref{pharm} which continuously vanish on a portion of the boundary of a relevant domain $\Om$. In this perspective, our main contributions can be summarized as follows: 
\begin{itemize}
\item[1)] Theorem \ref{T:eaADP}, in which we obtain an estimate from above which says that any such solution should vanish at most linearly like the sub-Riemannian distance from the boundary:
\begin{equation}\label{ea0}
\frac{u(g)}{u(A_r(g_0))} \leq C^{-1}  \frac{d(g,\p \Om)}{r},
\end{equation}
where $A_r(g_0)\in \Om$ is a non-tangential point relative to $g_0\in \p \Om$.
\item[2)] Theorem \ref{T:eb}, in which we establish an estimate from below which states that the order of vanishing is exactly linear, i.e:
\begin{equation}\label{eb0}
\frac{u(g)}{u(A_r(g_0))} \ge C  \frac{d(g,\p \Om)}{r}.
\end{equation}
\end{itemize}
Combining these two results we obtain
\begin{equation}\label{le}
C  \frac{d(g,\p \Om)}{r} \le \frac{u(g)}{u(A_r(g_0))} \leq C^{-1}  \frac{d(g,\p \Om)}{r}.
\end{equation}
Finally, since the constant $C>0$ is independent of the particular $p$-harmonic function $u$, we conclude in Theorem \ref{T:cp} that for any two nonnegative $p$-harmonic functions $u, v$, which continuously vanish on a portion of the boundary, one has
 \begin{equation}\label{cpi}
 C \frac{u(A_r(g_0)}{v(A_r(g_0))} \le \frac{u(g)}{v(g)} \leq C^{-1} \frac{u(A_r(g_0)}{v(A_r(g_0))}.
 \end{equation}
Thus, \emph{all nonnegative $p$-harmonic functions which vanish on a portion of the boundary, must do so at the same rate}.

This description clearly provides an oversimplified picture, since we have not specified under which assumptions, and where, each of the relevant estimates \eqref{ea0}, \eqref{eb0} is valid. In this respect, it is worth observing that, in the Euclidean setting and for $p=2$, although the comparison theorem \eqref{cpi} does hold
for large classes of domains with rough boundaries (for instance, in Lipschitz
or even NTA domains, see \cite{CFMS}, \cite{JK}), the linear decay estimate
\eqref{le} breaks down if the domain fails to satisfy a uniform bound on its
curvatures. 

For instance, if $0<\theta_0 < \pi/2$ and we consider in $\R^2$
the convex circular sector $\Om = \{(r,\theta)\mid 0<r<1 ,
|\theta|<\theta_0\}$, where $\theta$ indicates the angle formed by
the directional vector of the point $(x,y)$ with the positive
direction of the $y$-axis, the function $u(r,\theta) = r^\lambda
\cos (\lambda\theta)$ is a nonnegative harmonic function in $\Om$
vanishing on that portion of $\p \Om$ corresponding to $|\theta| =
\theta_0$ provided that\[ \lambda  = \frac{\pi}{2\theta_0}.
\]
From our choice, we have $\lambda >1$ and therefore this example
shows that for domains without an interior tangent ball the estimate
from below \eqref{eb0} cannot possibly hold in general. Using the
same type of domain and function, but this time with
$\pi/2<\theta_0<\pi$ (a non-convex cone) we see that if the tangent
outer ball condition fails, then there exist harmonic functions
which vanish at the boundary at best with a H\"older rate $<1$.
Therefore, the estimate from above \eqref{ea0} cannot possibly
hold in general.

In the classical setting of $\Rn$, when the domain satisfies a uniform tangent ball condition,  i.e., it possesses at every boundary point a ball tangent from the inside, and one tangent from the outside, with a uniform control on the radii of such balls, the linear decay estimate \eqref{le} does hold, even for solutions to uniformly elliptic or parabolic equations, and its proof can be found in \cite{G}. Recently, this result has been extended to the classical $p$-Laplacian in $\Rn$ in the paper \cite{AKSZ}. Now, it is well-known that the uniform tangent ball condition characterizes $C^{1,1}$ domains, and from the above examples it is clear that this degree of smoothness is essentially optimal if one looks for a linear decay such as that in \eqref{le}. This introduces us to the central theme of the present paper.

In the study of the sub-Riemannian Dirichlet problem the (Euclidean) smoothness of the ground domain has no bearing on the boundary behavior of the relevant harmonic functions. This is due to the presence of characteristic points on the boundary. To illustrate this point, consider the real-analytic domain  $\Om_M = \{(z,t)\in
\Hn\mid t> - M |z|^2\}$, which possesses an isolated characteristic point $e = (0,0)$. Then, in \cite{Jer}, part II, it was proved that there exists $M>0$ such that
$\Om_M$ supports a nonnegative harmonic function (a solution of the
real part of the Kohn-Spencer sub-Laplacian in $\Hn$) which vanishes on the boundary,
and which goes to zero near $e = (0,0)\in \p \Om_M$ at most like
$d((z,t),\p \Om_M)^\lambda$, for some $0<\lambda<1$. Therefore, for this $\Om_M$ an estimate such as \eqref{ea0} fails (the reader should also notice
that this example shows the failure of Schauder type estimates at
characteristic points). We note here that, from the point of view of the sub-Riemannian geometry of $\Hn$ (the dilations of $\Hn$ are the non-isotropic dilations $(z,t)\to (\lambda z,\lambda^2 t)$), near its characteristic point $e$ the paraboloid $\Om_M$ looks like the (Euclidean) non-convex cone discussed above and, in fact, such domain fails to satisfy the intrinsic
tangent outer ball condition at $e$.

But the situation is even worse than this! The domain $\Om
=\{(z,t)\in \Hn\mid |z|^4 + 16 (t-1)^2<1\}$ is a smooth domain (in fact, real analytic) which
satisfies the uniform interior (and
exterior) ball condition at every point of its boundary (for this see \cite{CG}). In such domain for every
$1<p<\infty$ the function
$u(z,t) = t$ is a positive $p$-harmonic function  which vanishes at the (characteristic) boundary point $e =
(0,0)\in \p \Om$. But for the sub-Riemannian distance $d$ of $\Hn$ we have for any $0<t<1$
\[
d((0,t),e)) \cong \sqrt{t},
\]
and therefore $u((0,t)) \cong d((0,t),\p \Om)^2$. Thus, the linear estimate from below \eqref{eb0} fails for this example.

It should be clear from this discussion that: 1) If one hopes for an estimate from above such as \eqref{ea0} a condition such as an intrinsic uniform outer ball must be imposed. The interesting aspect of this geometric assumption is that it does not distinguish between characteristic or non-characteristic points. In the sense that, once it is assumed, the twisted geometry of the sub-Riemannian balls at characteristic points automatically rules out negative examples such as the above domain $\Om_M$.  2) The situation for the estimate from below \eqref{eb0} is quite more difficult. As we have seen, it cannot hold on the characteristic set, but it is not clear for which domains it does hold. 
There is also a third important aspect which pertains both estimates \eqref{ea0}, \eqref{eb0}. One needs to have enough regions of non-tangential approach to the boundary, and for this one needs to assume that the relevant domain be NTA (non-tangentially accessible) with respect to the sub-Riemannian metric.

We are now in the position of presenting the precise statement of \eqref{ea0}.

\begin{thrm}\label{T:eaADP}
Let $\Om\subset \Hn$ be an ADP domain and let $g_0\in \p \Om$,  $0<r<R_0/6$ where $R_0>0$ depends
only on the ADP character of $\Om$. If  $u$ is a nonnegative $p$-harmonic
function in $\Om\cap B(g_0,6r)$ which vanishes continuously on
$\partial\Om\cap B(g_0,6r)$, then there exists $C = C(n,\Om,p)>0$ such that
for every $g\in \Om\cap B(g_0,r)$ one has
\begin{equation}\label{eai}
\frac{u(g)}{u(A_r(g_0))} \leq C  \frac{d(g,\p \Om)}{r}.
\end{equation}
\end{thrm}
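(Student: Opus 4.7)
The plan is a barrier-plus-Carleson argument in the spirit of the classical treatment \cite{AKSZ}, adapted to $\Hn$. The first input is a Carleson-type sup estimate for nonnegative $p$-harmonic functions vanishing on a boundary portion of an NTA sub-Riemannian domain, which I expect to have been established earlier in the paper as a precursor of Theorem \ref{T:eaADP}: it gives
$$
M := \sup_{\Om\cap B(g_0,2r)} u \le C_1\, u(A_r(g_0)).
$$
It then suffices to produce a barrier $\Phi$, $\mathcal L_p$-harmonic in $\Om\cap B(g_0,2r)$ (away from a pole in $\Om^c$), satisfying $\Phi\ge M$ on $\p B(g_0,2r)\cap\Om$, $\Phi\ge 0$ on $\p\Om\cap B(g_0,2r)$, and $\Phi(g)\lesssim M\,d(g,\p\Om)/r$. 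The weak comparison principle for $\mathcal L_p$ in $\Hn$, available by convexity of the horizontal $p$-energy, then yields $u\le\Phi$ in $\Om\cap B(g_0,2r)$, which is \eqref{eai}.

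To build $\Phi$ I exploit the uniform outer tangent ball supplied by the ADP condition. Fix $g\in\Om\cap B(g_0,r)$, set $\delta:=d(g,\p\Om)$ (which we may take $\ll r$, else the conclusion follows from Harnack), and pick $g^*\in\p\Om$ realizing the distance. The ADP hypothesis, combined with the uniform equivalence of the Kaplan gauge $N$ with the sub-Riemannian distance $d$, furnishes $g^\#\in\Om^c$ such that $\{h:N(g^{\#-1}h)<r_0\}\subset\Om^c$ and $g^*\in\{N(g^{\#-1}\cdot)=r_0\}$, with $r_0$ controlled only by the ADP character of $\Om$. Setting $\alpha:=(Q-p)/(p-1)$, the function $h\mapsto N(g^{\#-1}h)^{-\alpha}$ is $\mathcal L_p$-harmonic on $\Hn\setminus\{g^\#\}$---up to a normalization, it is the explicit fundamental solution---so
$$
\Phi(h) := A\bigl[r_0^{-\alpha} - N(g^{\#-1}h)^{-\alpha}\bigr]
$$
is also $\mathcal L_p$-harmonic there; it vanishes on the gauge sphere of radius $r_0$ and is positive outside it, so $\Phi\ge 0$ on $\p\Om\cap B(g_0,2r)$. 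One then fixes $A>0$ so that $\Phi\ge M$ on $\p B(g_0,2r)\cap\Om$. The critical case $p=Q$ calls for the logarithmic substitution $\log N$ in place of $N^{-\alpha}$, but the rest of the argument is unchanged.

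The pointwise bound at $g$ reduces to a first-order expansion. A pseudo-triangle inequality for the gauge yields $N(g^{\#-1}g) - r_0 = N(g^{\#-1}g) - N(g^{\#-1}g^*) \lesssim N((g^*)^{-1}g) \asymp \delta$, while for $h\in\p B(g_0,2r)\cap\Om$ the non-tangential (corkscrew) geometry of ADP domains forces $N(g^{\#-1}h) - r_0 \gtrsim r$; Taylor-expanding $t\mapsto t^{-\alpha}$ at $t=r_0$ then gives $\Phi(g) \lesssim M\,\delta/r$, which is \eqref{eai}. I expect the main obstacle to lie in the fine handling of \emph{characteristic} points of $\p\Om$, where the intrinsic outer gauge ball can osculate $\p\Om$ in twisted non-Euclidean fashion: one must verify quantitatively, uniformly in the location of $g^*$, that the gauge ball genuinely sits in $\Om^c$ in the form needed for the comparison, a check resting on the precise intrinsic nature of the ADP definition and on the uniform equivalence $d\asymp N$. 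A secondary, routine but unavoidable point is the justification of the $\mathcal L_p$ weak comparison principle on the non-smooth region $\Om\cap B(g_0,2r)$, which requires checking that $(u-\Phi)_+$ lies in the appropriate horizontal Sobolev class vanishing on the boundary of that region.
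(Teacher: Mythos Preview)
Your strategy---Carleson estimate to control $\sup u$ near $g_0$, then a barrier built from the explicit gauge fundamental solution with pole at the center of an exterior tangent gauge ball, then weak comparison and a first-order expansion---is precisely the route the paper takes. The execution, however, has a genuine gap in the choice of comparison region. You run the comparison on $\Om\cap B(g_0,2r)$ and assert that for $h\in\partial B(g_0,2r)\cap\Om$ ``corkscrew geometry forces $N((g^\#)^{-1}h)-r_0\gtrsim r$''. This is not true. Points $h$ on $\partial B(g_0,2r)\cap\Om$ may lie arbitrarily close to $\partial\Om$, hence arbitrarily close to the exterior gauge sphere $\partial B(g^\#,r_0)$; already for the Euclidean half-space with $r_0$ fixed one computes $d(g^\#,h)-r_0\sim r^2/r_0$ as $h\to\partial\Om$, not $\sim r$. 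Thus $\inf_{\partial B(g_0,2r)\cap\Om}\Phi=0$, no finite $A$ gives $\Phi\ge M$ there, and since a priori $u$ enjoys only H\"older (not linear) decay at $\partial\Om$, one cannot conclude $u\le\Phi$ on that boundary portion either. The corkscrew condition says nothing about this quantity.

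The repair, which is exactly what the paper does, has two pieces. First, the uniform outer ball condition (Definition \ref{D:uob}) supplies an exterior gauge ball of \emph{every} radius below $R_0$, so one takes the tangent ball of radius comparable to $r$ (not a fixed $r_0$). Second, one centers the comparison annulus at the exterior-ball center $g_1$ rather than at $g_0$: with $B(g_1,r/2)\subset\Hn\setminus\overline\Om$ tangent at $\overline g$, one compares on $\Om\cap B(g_1,3r)$, where the barrier $f$ equals $1$ identically on $\partial B(g_1,3r)$ and therefore does not degenerate near $\partial\Om$. The Carleson bound (from \cite{AS}) on $\Om\cap B(g_0,5r)\supset\Om\cap B(g_1,3r)$ then closes the comparison via Theorem \ref{T:ct}, and the mean-value theorem applied to $s\mapsto s^{(p-Q)/(p-1)}$ yields the linear bound. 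Incidentally, your concerns about characteristic points and about passing between $d_{cc}$ and the gauge are unnecessary here: the outer-ball hypothesis is formulated directly in terms of gauge balls, uniformly over all of $\partial\Om$, and the paper works throughout with $d$.
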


As we have mentioned above, ADP domains are defined by imposing that they be NTA with respect to the sub-Riemannian distance associated with the vector fields $X_1,...,X_m$, and that furthermore they satisfy a uniform outer tangent ball condition, see Definition \ref{D:uob}. It is worth emphasizing here that in $\Hn$ the class of such domains is very rich. For instance, every (Euclidean) convex and $C^{1,1}$ domain is ADP. This follows from the fact that (Euclidean) $C^{1,1}$ domains are NTA, see \cite{MM2} and Theorem \ref{T:MM} below, and that (Euclidean) convex domains possess a uniform gauge ball tangent from the outside, see \cite{LU}. Concerning \eqref{eai} we note that, by the Harnack inequality in \cite{CDG1}, we know that $u>0$ in $\Om$, and thus it makes sense to divide by $u(A_r(g_0))$.

We now turn our attention to the estimate from below \eqref{eb0}. The implementation of the ideas in \cite{G} in the sub-Riemannian setting involves a delicate analysis whose central objective is proving the existence of appropriate uniform families of intrinsic balls which are tangent from the inside to the relevant domain, and whose centers are located along paths which possess a crucial segment, or quasi-segment property with respect to the sub-Riemannian distance. It is remarkable that away from the characteristic set, at every scale, (Euclidean) $C^{1,1}$ domains possess such uniform families of balls. Proving this fact involves a substantial amount of work, and it constitutes the entire content of Sections \ref{S:qs1} and \ref{S:qs2}. 
As a consequence of such work we show that every (Euclidean) $C^{1,1}$ domain $\Om\subset \Hn$ satisfies the uniform ball condition away from its characteristic set, in the sense of Definition \ref{D:moduob} below.  As we have mentioned, on the characteristic set this delicate geometric property fails even if $\p \Om$ is real analytic, but, on the other hand, we have seen that the estimate \eqref{eb0} fails as well.

\begin{thrm}\label{T:eb}
Let $u$ be a nonnegative $p$-harmonic function
in a bounded (Euclidean) $C^{1,1}$ domain $\Om\subset \Hn$. Then, there exists $M>1$ depending only on $\Om$ such that for every
$g_0\in \p \Om\setminus \Sigma_\Om$ and every
$0<r<\frac{d(g_0,\Sigma_\Om)}{M}$, one has for some constant $C =
C(n,\Om,p)>0$
\begin{equation}\label{ebi}
\frac{u(g)}{u(A_r(g_0))} \geq C \frac{d(g,\p \Om)}{r}
\end{equation}
for every $g\in \Om \cap B(g_0,r)$. \end{thrm}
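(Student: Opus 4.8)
The plan is to run, in the sub-Riemannian setting, the barrier argument that in the Euclidean case goes back to \cite{G} (and, for the $p$-Laplacian, to \cite{AKSZ}): bound $u$ from below by a suitably normalized $p$-capacitary function of a gauge-ball condenser whose outer sphere is an \emph{interior} tangent sphere to $\p\Om$ at a boundary point near $g_0$, and whose inner sphere sits at a non-tangential distance from $\p\Om$. First I would dispose of the easy case. Fix $g\in\Om\cap B(g_0,r)$, put $\delta=d(g,\p\Om)$, and let $C_0$ be a large constant to be chosen. If $\delta\ge r/C_0$, then $g$ is a non-tangential point at scale $r$ relative to $g_0$ and can be joined to $A_r(g_0)$ by a Harnack chain whose length is bounded in terms of $C_0$ and the NTA character of $\Om$ (recall $C^{1,1}$ domains are NTA, Theorem \ref{T:MM}), and \eqref{ebi} follows from the Harnack inequality of \cite{CDG1}. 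So assume $\delta<r/C_0$, and let $\hat g\in\p\Om$ realize $d(g,\p\Om)$. Since $d(\hat g,g_0)\le\delta+r<2r$ and $M$ is taken large with $r<d(g_0,\Sigma_\Om)/M$, we get $d(\hat g,\Sigma_\Om)\ge d(g_0,\Sigma_\Om)-2r\gtrsim r$, so $\hat g$ is non-characteristic and quantitatively away from $\Sigma_\Om$.

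The geometric heart is the next step, where I would invoke the output of Sections \ref{S:qs1}--\ref{S:qs2}: away from $\Sigma_\Om$, and at every scale, a $C^{1,1}$ domain in $\Hn$ satisfies the uniform interior ball condition of Definition \ref{D:moduob}, with the centers of the relevant tangent (gauge) balls lying along paths that enjoy the quasi-segment property with respect to $d$. Applying this at $\hat g$ I obtain a gauge ball $\mathcal B_0=\{y:N(x_0^{-1}y)<\rho\}$ with $\rho\asymp r$ (a fixed small fraction of $\min\{r,d(\hat g,\Sigma_\Om)\}\asymp r$) such that $\overline{\mathcal B_0}\subset\overline\Om\cap B(g_0,6r)$, $\p\mathcal B_0$ is tangent to $\p\Om$ at $\hat g$, and — this is where the quasi-segment property enters — the inward approach region at $\hat g$ of size $\asymp r$ is contained in $\mathcal B_0$; in particular, after replacing $g$ by a nearby non-tangential point at scale $\delta$ if necessary (at the cost of a Harnack chain of bounded length, since such a point is within $C\delta$ of $g$ and at distance $\asymp\delta$ from $\p\Om$), I may assume $g\in\overline{\mathcal B_0}$ with $N(x_0^{-1}g)\ge\rho/2$. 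Because $\overline{\mathcal B_0}\subset\overline\Om$ and $\hat g\in\p\mathcal B_0\cap\p\Om$, one checks that $d(g,\p\mathcal B_0)=d(g,\p\Om)\asymp\delta$, and by comparability of $d$ with the Korányi gauge, $\rho-N(x_0^{-1}g)\asymp\delta$.

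Finally I would build and apply the barrier. Let $w$ be the $\mathcal L_p$-harmonic function in the spherical annulus $\mathcal A=\mathcal B_0\setminus\overline{\{N(x_0^{-1}\cdot)\le\rho/2\}}$ with boundary values $1$ on the inner sphere and $0$ on $\p\mathcal B_0$. In $\Hn$ this $w$ is explicit — a function of the gauge $N(x_0^{-1}\cdot)$ alone (a power of it for $p\ne Q$, a logarithm for $p=Q$) — and an elementary one-variable estimate on that profile gives $w(y)\ge c\,(\rho-N(x_0^{-1}y))/\rho$ on $\mathcal A$ with $c=c(p,Q)>0$. On the inner sphere every point is non-tangential at scale $\asymp r$ inside $B(g_0,6r)$, hence joined to $A_r(g_0)$ by a Harnack chain of bounded length, so $u\ge c_1\,u(A_r(g_0))$ there; on $\p\mathcal B_0$ trivially $u\ge0=w$. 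Since $u\ge0$ is $\mathcal L_p$-harmonic in $\mathcal B_0$ and $\mathcal L_p(\lambda w)=\lambda^{p-1}\mathcal L_p w$, the function $c_1 u(A_r(g_0))\,w$ is $\mathcal L_p$-harmonic in $\mathcal A$ and lies below $u$ on $\p\mathcal A$; the weak comparison principle for $\mathcal L_p$ then gives $u\ge c_1 u(A_r(g_0))\,w$ throughout $\mathcal A$. Evaluating at $g\in\mathcal A$ and using the previous step,
\[
u(g)\ \ge\ c_1\,u(A_r(g_0))\,w(g)\ \ge\ c\,u(A_r(g_0))\,\frac{\rho-N(x_0^{-1}g)}{\rho}\ \ge\ C\,u(A_r(g_0))\,\frac{\delta}{r},
\]
which, tracing back the reduction, is \eqref{ebi}. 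I expect the only genuinely hard point to be the geometric step: producing an interior tangent gauge ball of radius comparable to $r$ at the non-characteristic point $\hat g$ that nonetheless captures the far closer point $g$ — exactly the content of the quasi-segment analysis of Sections \ref{S:qs1}--\ref{S:qs2}, and exactly where the characteristic set must be excluded. The remaining ingredients (the explicit gauge-annulus $p$-capacitary function and its linear decay near the outer sphere, the Harnack chains supplied by NTA-ness, and the weak comparison principle for $\mathcal L_p$) are routine once those sections are in place.
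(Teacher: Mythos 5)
Your proposal is correct and follows essentially the same path as the paper's proof: reduce to $\delta = d(g,\p\Om)$ small, pass to the nearest boundary point $\hat g$ (which is quantitatively non-characteristic once $M$ is large), invoke the interior tangent gauge ball $B(g_1,\rho)$ with $\rho\asymp r$ from Sections \ref{S:qs1}--\ref{S:qs2} together with the quasi-segment property to get $\rho - d(g_1,g)\gtrsim\delta$, and then compare $u$ with the explicit radial $p$-capacitary barrier on the annulus and close with Harnack chains (in the paper these appear as the function $f$ on $B(g_1,2r)\setminus \overline B(g_1,\epsilon r)$ and the chain from $g_1$ to $A_r(g_0)$). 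One small imprecision worth noting: the identity $d(g,\p\mathcal B_0)=d(g,\p\Om)$ you assert is not literally forced by $\overline{\mathcal B_0}\subset\overline\Om$ and the common tangency at $\hat g$; what the quasi-segment property (Lemma \ref{L:vp} in Case 1, Lemma \ref{L:edfb} in Case 2) actually delivers — and what the paper uses after the mean value theorem — is the lower bound $d(g_1,\overline g)-d(g_1,g)\ge \tfrac12\,\delta$, which together with $d(g,\p\mathcal B_0)\le d(g,\hat g)=\delta$ gives the comparability $\asymp\delta$ you need.
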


In the above statement we have indicated by $\Sigma_\Om$ the characteristic set of $\p \Om$. This is a compact subset of $\p \Om$. We emphasize that, as far as we are aware of, Theorem \ref{T:eb} is new even in the linear case $p=2$. Finally, by combining Theorems \ref{T:eaADP} and \ref{T:eb} we obtain the
following result.

\begin{thrm}[Boundary comparison principle]\label{T:cp}
Let $\Om\subset \Hn$ be a bounded (Euclidean) $C^{1,1}$ domain.
Given
$g_0\in \p \Om\setminus \Sigma_\Om$,  for
$0<r<\frac{d(g_0,\Sigma_\Om)}{M}$, where $M>1$ depends only on $\Om$,  let $u$, $v$ be two nonnegative $p$-harmonic
functions in $\Om \cap B(g_0,6r)$ vanishing
continuously on $\partial\Om\cap B(g_0,6r)$. There exists $C = C(n,\Om,p)>0$
such that for every $g\in \Om \cap B(g_0,r)$ one has
\begin{equation}\label{ctsub}
C \frac{u(A_r(g_0)}{v(A_r(g_0))} \le \frac{u(g)}{v(g)} \leq C^{-1} \frac{u(A_r(g_0)}{v(A_r(g_0))}.
\end{equation}
\end{thrm}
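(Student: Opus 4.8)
The plan is to deduce \eqref{ctsub} by chaining together the two one-sided estimates of Theorems \ref{T:eaADP} and \ref{T:eb}, applied once to $u$ and once to $v$. The first thing I would do is record the geometric reduction that makes both theorems available in the present setting: although a bounded (Euclidean) $C^{1,1}$ domain need not be globally ADP, near a point $g_0\in\p\Om\setminus\Sigma_\Om$ and at scales $0<r<d(g_0,\Sigma_\Om)/M$ with $M$ large it behaves like one — it is NTA there (Theorem \ref{T:MM}), and by the analysis of Sections \ref{S:qs1}--\ref{S:qs2} it possesses on $\p\Om\cap B(g_0,6r)$ both the uniform intrinsic outer tangent ball of Definition \ref{D:uob} and the uniform interior ball of Definition \ref{D:moduob}. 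Consequently the (localized) hypotheses of Theorem \ref{T:eaADP} and of Theorem \ref{T:eb} are satisfied by \emph{any} nonnegative $p$-harmonic function in $\Om\cap B(g_0,6r)$ vanishing continuously on $\p\Om\cap B(g_0,6r)$, with the same non-tangential point $A_r(g_0)$ and with a constant $C=C(n,\Om,p)$ independent of the particular solution. By the Harnack inequality of \cite{CDG1} both $u$ and $v$ are strictly positive in $\Om\cap B(g_0,6r)$, so all the quotients below make sense.

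Now fix $g\in\Om\cap B(g_0,r)$ and abbreviate $\delta=d(g,\p\Om)\in(0,r)$ (the nearest boundary point lies in $B(g_0,2r)\subset B(g_0,6r)$, so there is no ambiguity in this distance). Applying the lower bound \eqref{ebi} to $u$ and the upper bound \eqref{eai} to $v$ I get $u(g)/u(A_r(g_0))\ge C\,\delta/r$ and $v(g)/v(A_r(g_0))\le C^{-1}\delta/r$. Dividing the first by the second and writing $u(g)/v(g) = \bigl(u(g)/u(A_r(g_0))\bigr)\bigl(v(g)/v(A_r(g_0))\bigr)^{-1}\,u(A_r(g_0))/v(A_r(g_0))$, the factors of $\delta/r$ cancel and I obtain $u(g)/v(g)\ge C^2\,u(A_r(g_0))/v(A_r(g_0))$. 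Interchanging the roles of $u$ and $v$ — this time applying \eqref{ebi} to $v$ and \eqref{eai} to $u$ — the identical computation gives $u(g)/v(g)\le C^{-2}\,u(A_r(g_0))/v(A_r(g_0))$. Renaming $C^{2}$ as a new constant of the form $C(n,\Om,p)>0$ then yields \eqref{ctsub}.

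The combination itself is thus entirely routine; the substance of the theorem lies in the two ingredients on which it rests. The hard part is not here but in Theorems \ref{T:eaADP} and \ref{T:eb}, and within the present argument the only nontrivial point is the geometric reduction of the first paragraph: that a $C^{1,1}$ domain, not assumed ADP, verifies locally — on $\p\Om\cap B(g_0,6r)$, down to scale $r$, away from $\Sigma_\Om$ — both the outer-ball condition needed for the upper estimate and the interior-ball condition needed for the lower one, with constants uniform over all admissible solutions $u,v$. This is exactly what the work of Sections \ref{S:qs1} and \ref{S:qs2}, together with the NTA property of $C^{1,1}$ domains, is designed to supply, so once those sections are in place the proof of Theorem \ref{T:cp} reduces to the short chaining displayed above.
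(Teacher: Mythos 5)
Your proof is correct and takes exactly the same route as the paper, whose own argument for this theorem is the single line ``Combine Theorem \ref{T:eaADP} with Theorem \ref{T:eb}''. The chaining computation you spell out --- applying \eqref{ebi} to one function and \eqref{eai} to the other, dividing so the factors $d(g,\p\Om)/r$ cancel, and then swapping $u$ and $v$ --- is the intended argument, and your constant-bookkeeping is right.

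Your first paragraph is a useful addition rather than a deviation: the paper states Theorem \ref{T:eaADP} for ADP domains, while Theorem \ref{T:cp} only assumes $\Om$ is (Euclidean) $C^{1,1}$ and such a domain need not be globally ADP (the uniform outer ball condition can fail at $\Sigma_\Om$). You are right that the mismatch is resolved because the proof of Theorem \ref{T:eaADP} only needs the Carleson estimate (available since $C^{1,1}\Rightarrow$ NTA, Theorem \ref{T:MM}) plus an outer tangent ball at boundary points $\overline g\in B(g_0,2r)$, and those points stay away from $\Sigma_\Om$ under the scale restriction $r<d(g_0,\Sigma_\Om)/M$, where Sections \ref{S:qs1}--\ref{S:qs2} supply the uniform ball condition. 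The paper leaves this localization implicit; making it explicit, as you do, is the right instinct, and the rest of your argument is sound.
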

 
It is worth mentioning here that, because of the nature of our approach, we obtain \eqref{ctsub} only away from the characteristic set. On the other hand, in \cite{CG} it was proved that, in the linear case $p=2$, a result like Theorem \ref{T:cp} does hold in every NTA domain and for general sub-Laplacians associated with smooth vector fields satisfying H\"ormander's finite rank condition. We also mention the recent paper \cite{LN} in which the authors have established a result such as \eqref{ctsub} for the classical $p$-Laplacian in a Lipschitz domain in $\Rn$. 
Whether in the nonlinear setting of the present paper Theorem \ref{T:cp} can be extended on the characteristic set remains at the moment a very challenging direction of investigation, which we defer to a future study.

In connection with Theorem \ref{T:eaADP} we also have the following result which provides a sharp estimate at the boundary for the Green function associated with the nonlinear operator in \eqref{pharm}. In the linear case $p=2$, such estimate was first obtained for the Heisenberg group $\Hn$ in \cite{LU}, and it was generalized to groups of Heisenberg type in \cite{CGN2}, and to general operators of H\"ormander type in \cite{CGN3}.
We recall that, in such general setting, it was proved in \cite{CDG2} that the fundamental solution $\Gamma_p$ of quasilinear equations modeled on \eqref{pharm} satisfies the following asymptotic estimate near its singularity
\[
C \left(\frac{d(g,g')^p}{|B(g,d(g,g'))|}\right)^{\frac{1}{p-1}} \le \Gamma_p(g,g') \le C^{-1} \left(\frac{d(g,g')^p}{|B(g,d(g,g'))|}\right)^{\frac{1}{p-1}}.
\]
If now $\Om$ is a bounded open set, then from this estimate and the weak maximum principle one immediately derives that the $p$-Green function $G_{\Om,p}$ for $\Om$ must satisfy the same control from above, i.e.,
\[
G_{\Om,p}(g,g') \le C^{-1} \left(\frac{d(g,g')^p}{|B(g,d(g,g'))|}\right)^{\frac{1}{p-1}},
\]
for all points $g, g'\in \Om$ with $g\not=g'$. However, for a domain $\Om \subset \Hn$ which satisfies the uniform outer ball condition, we obtain the following sharp result. 

\begin{thrm}\label{T:pimproved}
Let $\Om\subset \Hn$ be a bounded domain satisfying the uniform
outer ball condition. Given $1<p\leq
Q$, let $G_{\Om,p}$ denote the Green function associated with
\eqref{pharm} and $\Om$.
\begin{itemize}
\item[(i)] If $1<p<Q$ there exists a constant $C = C(n,\Om,p)>0$ such that
\[
G_{\Om,p}(g',g) \leq C
\left(\frac{d(g,g')}{|B(g,d(g,g'))|}\right)^{1/(p-1)} d(g',\p \Om),\ \ g,g'\in \Om,\ g'\not=g.
\]
\item[(ii)] If $1<p<Q$ and $p$ and $\Om$ are such that the $\Gp(g',g) =
\Gp(g,g')$, then one has
\[
G_{\Om,p}(g,g') \leq C
\left(\frac{d(g,g')}{|B(g,d(g,g'))|}\right)^{1/(p-1)} \frac{d(g,\p
\Om) d(g',\p \Om)}{d(g,g')},\ \ g,g'\in \Om,\ g\not= g'.
\]
\item[(iii)] If $p=Q$, then
\[
G_{\Om,p}(g',g) \leq C \log
\left(\frac{diam(\Om)}{d(g,g')}\right) \frac{d(g',\p
\Om)}{d(g,g')},\ \ g,g'\in \Om,\ g'\not=g,
\]
\item[(iv)] When $p=Q$ and $\Gp(g',g) = \Gp(g,g')$ one has
\[
G_{\Om,p}(g',g)\ \leq\ C\ \log
\left(\frac{diam(\Om)}{d(g,g')}\right) \frac{d(g,\p \Om)d(g',\p
\Om)}{d(g,g')^2},\ \ g,g'\in \Om,\ g'\not=g.
\]
\end{itemize}
\end{thrm}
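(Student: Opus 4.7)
The plan is to combine the weak maximum principle---which controls $G_{\Om,p}(\cdot,g)$ from above by the free-space fundamental solution $\Gamma_p(\cdot,g)$ together with the asymptotic bound from \cite{CDG2}---with a Perron-Bony type barrier erected at every boundary point via the uniform outer ball condition. Given $g_0\in\partial\Om$, let $B_e = B(g_0^{\ast},R_0)$ denote the outer tangent ball of uniform radius $R_0$, and introduce
\[
w(h) \ =\ \Gamma_p(g_0^{\ast},g_0)\,-\,\Gamma_p(g_0^{\ast},h).
\]
Since $g_0^{\ast}\notin\Om$, $w$ is $p$-harmonic in $\Om$. The monotonicity of $\Gamma_p(g_0^{\ast},\cdot)$ in the $\mathcal L_p$-gauge gives $w\ge 0$ on $\partial\Om$, and a first-order expansion of the asymptotic from \cite{CDG2} at $g_0$ produces the linear decay
\[
w(h)\ \le\ C\,R_0^{-(Q-p)/(p-1)-1}\,d(h,\partial\Om)
\]
for $h$ in a fixed neighborhood of $g_0$; when $p=Q$ the same linear expansion is extracted from $\log(R_0+s)-\log R_0\approx s/R_0$.

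For part (i), fix $g,g'\in\Om$, set $\rho=d(g,g')$, and let $u(\cdot)=G_{\Om,p}(\cdot,g)$. If $d(g',\partial\Om)\ge c\rho$ the bound $u\le\Gamma_p(\cdot,g)$ together with $\Gamma_p(g,g')\le C(\rho^p/|B(g,\rho)|)^{1/(p-1)}$ already yields (i). Otherwise, let $g_0\in\partial\Om$ realize $d(g',\partial\Om)$ and apply the barrier on $D=\Om\cap B(g_0,r)$, with $r\sim\rho$ chosen so that $g\notin\overline{B(g_0,2r)}$. The maximum principle gives $u\le C(\rho^p/|B(g,\rho)|)^{1/(p-1)}$ on the interior portion $\Om\cap\partial B(g_0,r)$, while the construction of $w$ forces $w\ge c\,r$ there. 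Choosing
\[
A\ \sim\ r^{-1}\bigl(\rho^p/|B(g,\rho)|\bigr)^{1/(p-1)}\ \sim\ \bigl(\rho/|B(g,\rho)|\bigr)^{1/(p-1)}
\]
yields $u\le Aw$ on $\partial D$, and therefore on all of $D$ by the comparison principle for $\mathcal L_p$; evaluating at $g'$ and using $w(g')\le C\,d(g',\partial\Om)$ produces (i).

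Part (ii) is obtained by bootstrapping (i) through the assumed symmetry of $G_{\Om,p}$. It is weaker than (i) when $d(g,\partial\Om)\ge c\rho$; in the opposite regime, pick $\tilde g_0\in\partial\Om$ closest to $g$ and repeat the same barrier scheme applied to $u(h)=G_{\Om,p}(g',h)=G_{\Om,p}(h,g')$ on $\tilde D=\Om\cap B(\tilde g_0,r)$, bounding the supremum of $u$ on $\Om\cap\partial B(\tilde g_0,r)$ via the already-proved (i) rather than via the raw maximum principle. This inserts the extra factor $d(g',\partial\Om)$ into the constant $A$, and comparison with $w(g)\le C\,d(g,\partial\Om)$ produces the additional factor $d(g,\partial\Om)/\rho$ needed in (ii). Items (iii) and (iv) follow from the identical two-step scheme, with $(\rho^p/|B(g,\rho)|)^{1/(p-1)}$ replaced throughout by $C\log(\mathrm{diam}(\Om)/\rho)$. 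The principal technical obstacle is to establish both the positivity and the quantitative linear Taylor expansion of $w$ using only the two-sided asymptotic bounds of \cite{CDG2} rather than an explicit formula for $\Gamma_p$: one must interpret the uniform outer ball condition relative to the gauge distance adapted to $\mathcal L_p$ (where monotonicity of $\Gamma_p(g_0^{\ast},\cdot)$ is natural), and then transfer back to $d$ via their equivalence, after which all constants depend only on $n$, $p$, and $R_0$.
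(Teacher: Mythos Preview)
Your overall strategy---case split on $d(g',\partial\Om)$ versus $\rho$, barrier from the outer tangent ball, comparison principle on a localized subdomain, then bootstrap via symmetry for (ii)---is exactly the paper's approach. Two points, however, deserve correction.

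First, the claim ``the construction of $w$ forces $w\ge c\,r$ on $\Om\cap\partial B(g_0,r)$'' is false as stated. Your barrier $w(h)=\Gamma_p(g_0^{\ast},g_0)-\Gamma_p(g_0^{\ast},h)$ vanishes on $\partial B(g_0^{\ast},R_0)$, and points of $\Om\cap\partial B(g_0,r)$ can lie arbitrarily close to that sphere (think of $\Om$ locally equal to the complement of the outer ball); hence $\inf_{\Om\cap\partial B(g_0,r)}w=0$ and no finite $A$ makes $u\le Aw$ on that portion of $\partial D$. The paper avoids this by localizing on a ball centered at the \emph{center} of the outer tangent ball, not at the boundary point: it uses an outer ball $B(g_1,r)$ of adapted radius $r\sim\rho$ (Definition~\ref{D:uob} supplies one at every scale below $R_0$) and takes $\Om_r=\Om\cap B(g_1,4r)$. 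On $\Om\cap\partial B(g_1,4r)$ one has $d(\cdot,g_1)=4r$, so the barrier
\[
f(g')=\frac{r^{(p-Q)/(p-1)}-d(g',g_1)^{(p-Q)/(p-1)}}{r^{(p-Q)/(p-1)}-(2r)^{(p-Q)/(p-1)}}
\]
is uniformly $\ge 1$ there, and the comparison goes through (this is Proposition~\ref{P:ea} in the paper). Your argument is repaired by this simple recentering.

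Second, the ``principal technical obstacle'' you flag---extracting positivity and a linear Taylor bound for $w$ from only the two-sided asymptotics of \cite{CDG2}---does not arise here. In $\Hn$ the fundamental solution is explicit: $\Gamma_p(g,g')=c_{n,p}\,d(g,g')^{(p-Q)/(p-1)}$ (Theorem~\ref{T:pfs}), a monotone function of the very gauge distance $d$ in which the uniform outer ball condition is formulated. Positivity of $w$ on $\overline\Om$ and the linear estimate $w(h)\le C\,d(h,g_0)/r$ then follow from the mean value theorem applied to $s\mapsto s^{(p-Q)/(p-1)}$, with no transfer between distances needed.
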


\begin{rmrk}\label{R:nonsym} Concerning parts (ii) and (iv) we mention that to the best of our knowledge the
question of symmetry (or non-symmetry) of the Green function is
largely unsettled even in the classical case of $\Rn$ and
of the standard $p$-Laplace equation. Using conformal
invariance, Janfalk \cite{Ja} has shown that the Green function for
the unit ball and the $n$-Laplacian is symmetric when $n>2$. He also
proved that for the same domain given any $x\not= 0$ there exists a
$p_x>n$ such that $\Gp(x,0)\not= \Gp(0,x)$ for all $p>p_x$. We are
not aware of results in either direction when $1<p<n$.
\end{rmrk}

In the linear case $p=2$ treated in \cite{CGN2}, \cite{CGN3} the sharp estimate (i) in Theorem \ref{T:pimproved} was used, in
combination with several basic harmonic analysis results obtained in
\cite{CG}, and with a crucial Ahlfors' type estimate for the
horizontal perimeter, to prove that for a ADP domain
the subelliptic
Poisson kernel satisfies a reverse H\"older inequality. As a
consequence of this fact, it was shown that harmonic measure, the
horizontal perimeter measure, and the standard surface measure are
mutually absolutely continuous. Furthermore, the Dirichlet problem
was solved for boundary data in $L^p$ with respect to the surface
measure. We plan to address some of these questions in the nonlinear setting of this paper in a   future study.

\section{Preliminaries}\label{S:cg}

The simplest and most important example of a non-Abelian Carnot
group of step $r=2$ is the $(2n+1)$-dimensional Heisenberg group
$\Hn$. We recall that a Carnot group of step $r$ is a connected, simply connected Lie
group $\bG$ whose Lie algebra $\bg$ admits a stratification $\bg=
V_1 \oplus \cdots \oplus V_r$ which is $r$-nilpotent, i.e.,
$[V_1,V_j] = V_{j+1},$ $j = 1,...,r-1$, $[V_j,V_r] = \{0\}$, $j =
1,..., r$. A trivial example of (an Abelian) Carnot
group is $\bG = \Rn$, whose Lie algebra admits the trivial
stratification $\bg = V_1 = \Rn$. The prototype \emph{par excellence} of a non-Abelian Carnot group is the Heisenberg group $\Hn$.

To describe such group it will be convenient to identify the generic
point $x+ iy\in \mathbb C^n$ with $z = (x,y)\in \R^{2n}$. With such identification $\mathbb C^n
\times \R$ is identified with $\R^{2n+1}$, and henceforth we denote with $g =
(x,y,t), g'=(x',y',t')$, etc., generic points in $\R^{2n+1}$. For a given
$z=(x,y)\in \R^{2n}$, we will denote $z^\perp = (y,-x)$. Notice that $z^\perp = J z$, where $J$ is the simplectic matrix in $\R^{2n}$ 
\begin{equation}\label{J}
J = \begin{pmatrix} 0 & I \\ - I & 0 \end{pmatrix}.
\end{equation}
The
Heisenberg group $\Hn$ is the Lie group whose underlying manifold
(in real coordinates) is $\R^{2n+1}$ with non-Abelian group
multiplication
\begin{eqnarray}\label{hgl}
g\  g'\ & =&\ (x,y,t)\  (x',y',t')
\\
& =&\ \left(x + x', y + y', t + t' + \frac{1}{2} (<x,y'> -
<x',y>)\right)\nonumber\\
& =&\ \left(z+z',t+t' +\frac{1}{2}<z,(z')^\perp>\right)\ .\nonumber
\end{eqnarray}
We will indicate with $e=(0,0,0)\in \Hn$ the group identity with
respect to \eqref{hgl}. Notice that for a given $g = (x,y,t)$ one has $g^{-1} = (-x,-y,-t)$. We let $L_g(g') = g g'$ denote the operator
of left-translation on $\Hn$, and indicate with $(L_g)_*$ its
differential. The Heisenberg algebra admits the decomposition
$\mathfrak h_n = V_1\oplus V_2$, where $V_1 = \mathbb R^{2n} \times
\{0\}_t$, and $V_2 = \{0\}_{\mathbb R^{2n}}\times \R_t$. Identifying
$\mathfrak h_n$ with the space of left-invariant vector fields on
$\Hn$, one easily recognizes that a basis for $\mathfrak h_n$ is
given by the $2n+1$ vector fields
\begin{equation}\label{vf2}
\begin{cases}
(L_g)_*\left(\frac{\p}{\p x_i}\right)\ \overset{def}{=} X_i(g) =
\frac{\p}{\p x_i} - \frac{y_i}{2}\ \frac{\p}{\p t}, \qquad i=1, \dots, n,
\\
(L_g)_*\left(\frac{\p}{\p y_i}\right) \overset{def}{=} X_{n+i}(g)
= \frac{\p}{\p y_i} + \frac{x_i}{2}\ \frac{\p}{\p t}, \qquad i=1, \dots, n,
\\
(L_g)_*\left(\frac{\p}{\p t}\right) \overset{def}{=} T(g) =
\frac{\p}{\p t},
\end{cases}
\end{equation}
and that the only non-trivial commutation relation is
\begin{equation}\label{commHn}
[X_i,X_{n+j}] = T \delta_{ij}\ ,\quad\quad\quad\quad i , j =
1, \dots, n.
\end{equation}

The relation \eqref{commHn} shows that $[V_1,V_1] =
V_2$. Since, as we have said, $[V_1,V_2] = \{0\}$, the
Heisenberg group is a Carnot group of step $r=2$.

The subspace $V_1$ is called the horizontal layer, whereas $V_2$ is
called the vertical layer of the Heisenberg algebra. It is clear
that $V_2$ constitutes the center of $\mathfrak h_n$ with respect to \eqref{hgl}. Elements of
$V_j$, $j=1,2$, are assigned the formal degree $j$. The associated
non-isotropic dilations of $\Hn$ are given by
\begin{equation}\label{dilHn}
\delta_\lambda(g) = (\lambda x, \lambda y, \lambda^2 t).
\end{equation}
The \emph{homogeneous dimension} of $\Hn$ with respect to
\eqref{dilHn} is the number $Q = 2n + 2$. In the analysis of $\Hn$
such number plays much the same role as that of the Euclidean dimension of
$\Rn$. This is justified by the fact that, given that Lebesgue
measure $dg$ is a left- and right-invariant Haar measure on $\Hn$,
one easily checks that
\[
(d\circ\delta_\lambda)(g) = \lambda^Q dg .
\]

We denote by $d_{cc}(g,g')$ the \emph{CC (or Carnot-Carath\'eodory)
distance} on $\Hn$ associated with the system $X=\{X_1,\dots,
X_{2n}\}$. It is well-known that in the Heisenberg group there is
another distance equivalent to $d_{cc}(g,g')$. Consider in fact the
Kor\'anyi-Folland nonisotropic gauge on $\Hn$\[ N(g) = (|z|^4 + 16
t^2)^{1/4}. \] Then it was proved in \cite{Cy} that
\[
d(g,g') = N(g^{-1} g')
\]
defines a metric on $\Hn$, the so called \emph{gauge distance}. The following formula, which follows from \eqref{hgl} will be often useful in the rest of this paper
\begin{equation}\label{hgl2}
d(g,g') = \left\{|z'-z|^4 + 16 \left(t'-t + \frac{1}{2}<z', z^\perp>\right)^2 \right\}^{1/4}.
\end{equation}
For later use we will need the following lemma.

\begin{lemma}\label{L:inv}
Let $S\in U(n)$ be a unitary matrix. If for $g = (z,t)$ we denote $Sg = (Sz,t)$ the action of $U(n)$ on $\Hn$, then 
\begin{equation}\label{inv}
d(Sg,Sg') = d(g,g'),\ \ \ \ g, g'\in \Hn.
\end{equation}
\end{lemma}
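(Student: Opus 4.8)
The plan is to verify the invariance directly from the explicit gauge formula \eqref{hgl2}, reducing everything to elementary properties of unitary matrices acting on $\R^{2n}$ together with compatibility between the $U(n)$-action and the symplectic form $\langle z, z^\perp\rangle = \langle z, Jz'\rangle$. First I would record the key algebraic fact: a matrix $S\in U(n)$, viewed as an element of $O(2n)$ via the standard identification $\mathbb C^n \cong \R^{2n}$, commutes with the complex structure $J$ (indeed, $J$ corresponds to multiplication by $i$, and $\mathbb C$-linearity of $S$ is exactly $SJ = JS$), and preserves the real inner product, i.e. $S^T S = I$. Consequently $S^T J S = S^{-1} J S = J$, so the skew bilinear form $(z,z')\mapsto \langle z, (z')^\perp\rangle = \langle z, J z'\rangle$ satisfies $\langle Sz, (Sz')^\perp\rangle = \langle Sz, J S z'\rangle = \langle z, S^T J S z'\rangle = \langle z, J z'\rangle = \langle z,(z')^\perp\rangle$.

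With this in hand the computation is immediate. Writing $g = (z,t)$, $g' = (z',t')$, $Sg = (Sz,t)$, $Sg' = (Sz',t')$, I substitute into \eqref{hgl2}:
\begin{align*}
d(Sg,Sg')^4 &= |Sz' - Sz|^4 + 16\left(t' - t + \tfrac{1}{2}\langle Sz', (Sz)^\perp\rangle\right)^2 \\
&= |S(z'-z)|^4 + 16\left(t' - t + \tfrac{1}{2}\langle z', z^\perp\rangle\right)^2 \\
&= |z' - z|^4 + 16\left(t' - t + \tfrac{1}{2}\langle z', z^\perp\rangle\right)^2 = d(g,g')^4,
\end{align*}
where in the second equality I used the symplectic-compatibility identity above (applied with the pair $z', z$ in place of $z, z'$) and in the third equality I used that $S$ is an isometry of $(\R^{2n}, |\cdot|)$, so $|S(z'-z)| = |z'-z|$. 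Taking fourth roots gives \eqref{inv}.

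The only point that requires a moment of care — and the closest thing to an obstacle — is making the identification $U(n)\hookrightarrow O(2n)$ and the relation $SJ = JS$ completely precise with respect to the chosen real coordinates $z = (x,y)$ and the specific form \eqref{J} of $J$; once one fixes the convention $x + iy \leftrightarrow (x,y)$ and checks that $J(x,y) = (y,-x)$ indeed represents multiplication by $i$ (note $z^\perp = (y,-x) = Jz$ as already stated in the preliminaries), the commutation $SJ = JS$ is just the statement that $S$ is $\mathbb C$-linear, and $S\in O(2n)$ because a unitary map preserves $\mathrm{Re}\,\langle\cdot,\cdot\rangle_{\mathbb C}$, which is the Euclidean inner product on $\R^{2n}$. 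Everything else is a one-line substitution into the gauge formula. I would also remark that the same argument shows invariance under left translations (which is built into $d(g,g') = N(g^{-1}g')$) and under the dilations \eqref{dilHn} up to the scaling factor $\lambda$, so Lemma \ref{L:inv} is the genuinely new symmetry of the metric beyond those already used, and it is precisely the rotational symmetry that will be exploited later when constructing the uniform families of intrinsic tangent balls in Sections \ref{S:qs1} and \ref{S:qs2}.
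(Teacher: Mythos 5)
Your proof is correct and follows essentially the same approach as the paper: both reduce the invariance to the algebraic fact that $S\in U(n)$, viewed in $O(2n)$, commutes with $J$ (equivalently $S^{T}JS=J$, which is the identity you use; the paper phrases it as $S z^{\perp}=(Sz)^{\perp}$), and then substitute into \eqref{hgl2}. One tiny point worth noting: with the paper's convention $z^{\perp}=Jz=(y,-x)$ and the identification $x+iy\leftrightarrow(x,y)$, the matrix $J$ represents multiplication by $-i$ rather than $i$, but this has no bearing on the argument since commuting with $J$ and with $-J$ are the same condition, so $\mathbb{C}$-linearity of $S$ still gives $SJ=JS$.
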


\begin{proof}
Note that if $S\in O(2n)$ is an orthogonal matrix such that 
\begin{equation}\label{SJ}
SJ = JS,
\end{equation}
where $J$ is the symplectic matrix in \eqref{J}, then we have
\[
S z^\perp = (Sz)^\perp,
\]
and thus from \eqref{hgl2} we conclude that \eqref{inv} holds. Therefore, the transformations which leave the gauge distance in $\Hn$ invariant are those arising from matrices $S\in O(2n)$ which satisfy \eqref{SJ}.
Now, notice that for \eqref{SJ} to hold one must have
\[
S = \begin{pmatrix}
A & - B
\\
B & A\end{pmatrix}.
\]
The group of these matrices is the symplectic group $Sp_n(\R)$, and it is well-known that
\[
O(2n) \cap Sp_n(\R) = U(n),
\]
the unitary group, see for instance \cite{Be}, p. 24. 

\end{proof}

One
easily verifies that there exists $C = C(n)>0$ such that
\begin{equation}\label{equivalence}
C\ d(g,g')\ \leq\ d_{cc}(g,g')\ \leq\ C^{-1}\ d(g,g'),
\quad\quad\quad\quad g,g'\in \Hn.
\end{equation}

Both $d_{cc}$ and $d$ are left-invariant
\begin{equation}\label{isometry}
d_{cc}(L_g(g'),L_g(g''))\ =\ d_{cc}(g',g'')\ ,\quad\quad\quad\quad
d(L_g(g'),L_g(g''))\ =\ d(g',g'')\ ,
\end{equation}
and homogeneous of degree one
\begin{equation*}
d_{cc}(\delta_\lambda(g'),\delta_\lambda(g''))\ =\ \lambda\ d_{cc}(g',g'')\
,\quad\quad\quad\quad d(\delta_\lambda(g'),\delta_\lambda(g''))\
=\ \lambda\ d(g',g'')\ .
\end{equation*}

In view of \eqref{equivalence} we can use either one of the two distances in all metric questions. Since, unlike $d_{cc}$, the gauge distance is smooth, in this paper we will exclusively work with the latter. If $|E|$ indicates the Lebesgue measure of a set $E\subset \Hn$, then denoting with
\begin{equation*}
B(g,R) = \{g'\in \Hn\mid d(g',g)<R\} ,
\end{equation*}
the gauge ball centered at $g$ with radius
$R$, one easily recognizes that there exist $\alpha_n>0$ such that for every $g\in \Hn$, $R > 0$, 
\begin{equation*}
|B(g,R)|\ =\
\alpha_n R^Q.
\end{equation*}

\section{$p$-Harmonic functions}\label{S:pharm}

Let $\Om\subset \Hn$ be an open
set. For $1\leq p\leq \infty$ we indicate with
$W^{1,p}_H(\Om)$ the Folland-Stein Sobolev space of the functions
$f\in L^p(\Om)$ whose distributional horizontal derivatives $X_if\in
L^p(\Om)$ for $i=1,...,m$, where $m=2n$, and the vector fields $X_1,...,X_{2n}$ are defined by \eqref{vf2}. The spaces $W^{1,p}_{H,loc}(\Om)$ and
${\overset{o}{W}}^{1,p}_H(\Om)$ are defined similarly to the
classical ones. Given $1<p<\infty$ we say that $u\in
W^{1,p}_{H,loc}(\Om)$ is $p$-harmonic in $\Om$ if
\[
\int_\Om |X u|^{p-2}<X u,X \phi> dg = 0,
\]
for every $\phi\in \overset{o}{W}^{1,p}_H(\Om)$ such that
$supp(\phi)\subset \subset \Om$. From the results in \cite{CDG1} it
is known that $p$-harmonic functions can be redefined on a set of
measure zero so that they become $\alpha$-H\"older continuous for
some $\alpha = \alpha(n,p)\in (0,1)$. Furthermore, nonnegative
$p$-harmonic functions satisfy the uniform Harnack inequality, see
\cite{CDG1}. For the following results we refer the reader to \cite{D}.

\begin{thrm}[Existence and uniqueness in the Dirichlet problem]\label{T:diri}
Let $\Om\subset \Hn$. Given any $\phi\in W^{1,p}_H(\Om)$, there
exists a unique $p$-harmonic function $u=H^{\Om,p}_{\phi}\in W^{1,p}_H(\Om)$ such
that $u - \phi \in {\overset{o}{W}}^{1,p}_H(\Om)$.
\end{thrm}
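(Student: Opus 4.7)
The plan is to prove Theorem \ref{T:diri} by the direct method of the calculus of variations, following the standard blueprint for $p$-Dirichlet problems. The natural functional is
\[
\mathcal F(v) \;=\; \int_\Om |Xv|^p\, dg,
\]
considered on the affine convex class
\[
\mathcal A \;=\; \phi + \overset{o}{W}^{1,p}_H(\Om) \;=\; \{v\in W^{1,p}_H(\Om)\,:\, v-\phi \in \overset{o}{W}^{1,p}_H(\Om)\}.
\]
The map $\xi \mapsto |\xi|^p$ on $\R^{2n}$ is strictly convex for $1<p<\infty$, so $\mathcal F$ is strictly convex along lines in $\mathcal A$; this will eventually give uniqueness for free, so the whole game is existence and verification that the minimizer is a weak solution of \eqref{pharm}.

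For existence, I would first observe that $\mathcal A$ is nonempty (it contains $\phi$) and $\mathcal F \ge 0$, so $m = \inf_{\mathcal A} \mathcal F$ is finite. Pick a minimizing sequence $\{u_k\}\subset \mathcal A$ and write $w_k = u_k - \phi \in \overset{o}{W}^{1,p}_H(\Om)$. The subelliptic Poincar\'e inequality on the Heisenberg group, applied to the compactly supported functions $w_k$ (after approximation), yields
\[
\|w_k\|_{L^p(\Om)} \;\le\; C(\Om)\, \|Xw_k\|_{L^p(\Om)} \;\le\; C(\Om)\bigl(\|Xu_k\|_{L^p} + \|X\phi\|_{L^p}\bigr),
\]
and $\|Xu_k\|_{L^p}$ stays bounded because $\mathcal F(u_k)\to m$. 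Hence $\{u_k\}$ is bounded in the reflexive Banach space $W^{1,p}_H(\Om)$, and by Banach--Alaoglu we extract a subsequence converging weakly to some $u\in W^{1,p}_H(\Om)$. Since $\overset{o}{W}^{1,p}_H(\Om)$ is a norm-closed (hence weakly closed) subspace, $u-\phi \in \overset{o}{W}^{1,p}_H(\Om)$, so $u\in \mathcal A$. Weak lower semicontinuity of $v\mapsto \int_\Om |Xv|^p\,dg$, which is immediate from the convexity of $|\cdot|^p$ and Mazur's lemma, then gives $\mathcal F(u)\le \liminf_k \mathcal F(u_k) = m$, so $u$ is a minimizer.

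To see that $u$ is $p$-harmonic, I would run the Euler--Lagrange computation: for $\psi\in \overset{o}{W}^{1,p}_H(\Om)$ with compact support in $\Om$, the function $\epsilon \mapsto \mathcal F(u+\ep\psi)$ is differentiable at $\ep=0$, because $\xi\mapsto |\xi|^p$ is $C^1$. Setting the derivative to zero produces
\[
\int_\Om |Xu|^{p-2} \langle Xu, X\psi\rangle\, dg \;=\; 0,
\]
which is exactly the weak form of $\mathcal L_p u = 0$. For uniqueness, if $u_1, u_2$ were two solutions of the Dirichlet problem, then $w = u_1-u_2 \in \overset{o}{W}^{1,p}_H(\Om)$ is admissible as a test function in the difference of the two weak formulations; a standard monotonicity inequality for the vector field $\xi\mapsto |\xi|^{p-2}\xi$ (the well-known $c_p|\xi-\eta|^p$ lower bound when $p\ge 2$, and its analogue for $1<p<2$) forces $X(u_1-u_2) = 0$ almost everywhere, and then the Poincar\'e inequality for functions in $\overset{o}{W}^{1,p}_H(\Om)$ gives $u_1 = u_2$.

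The main technical ingredient is the Poincar\'e inequality for $\overset{o}{W}^{1,p}_H(\Om)$, which is really the only point where one needs nontrivial structure of $\Om$ (boundedness, or more generally finite measure in a suitable sense) and which underlies both the a priori bound on the minimizing sequence and the uniqueness step. Everything else is soft functional analysis on the reflexive space $W^{1,p}_H(\Om)$ together with the strict convexity of $|\cdot|^p$; the subelliptic nature of $X_1,\dots,X_{2n}$ enters only through the availability of that Poincar\'e inequality and the fact that $\overset{o}{W}^{1,p}_H(\Om)$ is a genuine closed subspace of $W^{1,p}_H(\Om)$.
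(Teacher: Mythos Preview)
Your argument is correct and is the standard variational approach to existence and uniqueness for the $p$-Dirichlet problem. Note, however, that the paper does not actually prove this theorem: it is stated as background and attributed to \cite{D}, so there is no ``paper's own proof'' to compare against. Your direct-method proof (minimizing sequence, Poincar\'e inequality in $\overset{o}{W}^{1,p}_H(\Om)$ for compactness, weak lower semicontinuity from convexity, Euler--Lagrange, and monotonicity of $\xi\mapsto|\xi|^{p-2}\xi$ for uniqueness) is precisely the route one would expect in the cited reference. Your remark that the Poincar\'e step requires $\Om$ to be bounded (or at least of finite measure) is well taken; the theorem as stated omits this hypothesis, but the paper works throughout with bounded domains, so this is an implicit standing assumption rather than a gap in your argument.
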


We also have the following.

\begin{thrm}[Comparison principle]\label{T:ct}
Let  $u\in W^{1,p}_{H,loc}(\Om)$  be a $p$-superharmonic function and $v\in W^{1,p}_{H,loc}(\Om)$ be a $p$-subharmonic
function in $\Om$. If $min\{u-v,0\}\in
{\overset{o}{W}}^{1,p}_H(\Om)$, then $u\geq v$ a.e. in $\Om$.
\end{thrm}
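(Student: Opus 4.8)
The plan is to run the standard monotonicity (Leray--Lions) argument for the $p$-Laplacian, adapted to the horizontal gradient $X=(X_1,\dots,X_m)$. Put $w=\max\{v-u,0\}=-\min\{u-v,0\}$. By hypothesis $w\in{\overset{o}{W}}^{1,p}_H(\Om)$, $w\geq 0$, and, by the usual Stampacchia-type calculus in Folland--Stein Sobolev spaces, $Xw=Xv-Xu$ a.e. on $E:=\{v>u\}$ and $Xw=0$ a.e. on $\Om\setminus E$. The first task is to show that $w$ is an admissible test function in the weak formulations defining the $p$-super/subharmonicity of $u$ and $v$. Since $w$ lies in the closure of $C^\infty_0(\Om)$ for the $W^{1,p}_H$ norm, one first multiplies by spatial cutoffs $\chi_R$ to obtain $\chi_R w\to w$ in $W^{1,p}_H(\Om)$ with $\operatorname{supp}(\chi_R w)\subset\subset\Om$, then mollifies; testing against $\chi_R w\geq 0$ and letting $R\to\infty$ is then legitimate because the fluxes $|Xu|^{p-2}Xu$ and $|Xv|^{p-2}Xv$ pair against $L^p$ horizontal gradients on relatively compact subsets of $\Om$, where $u,v\in W^{1,p}_H$. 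This is exactly where the hypothesis $\min\{u-v,0\}\in{\overset{o}{W}}^{1,p}_H(\Om)$ enters.

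Having justified this, we insert $w$ into the two weak inequalities,
\begin{equation*}
\IO |Xu|^{p-2}\langle Xu, Xw\rangle\, dg \ \geq\ 0, \qquad \IO |Xv|^{p-2}\langle Xv, Xw\rangle\, dg \ \leq\ 0,
\end{equation*}
and subtract the first from the second to get
\begin{equation*}
\int_{E} \big\langle\, |Xv|^{p-2}Xv - |Xu|^{p-2}Xu,\ Xv - Xu \,\big\rangle\, dg \ \leq\ 0 ,
\end{equation*}
using that $Xw$ is supported on $E$ and equals $Xv-Xu$ there.

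The algebraic engine is the strict monotonicity of the vector field $\xi\mapsto|\xi|^{p-2}\xi$ on $\Rm$: for all $\xi,\eta\in\Rm$ one has $\langle\, |\xi|^{p-2}\xi-|\eta|^{p-2}\eta,\ \xi-\eta\,\rangle\geq 0$, with equality if and only if $\xi=\eta$ (indeed $\geq c_p|\xi-\eta|^p$ when $p\geq 2$, and $\geq c_p|\xi-\eta|^2(|\xi|+|\eta|)^{p-2}$ when $1<p<2$). Thus the integrand in the last display is nonnegative while its integral is nonpositive, so the integrand vanishes a.e. on $E$; hence $Xv=Xu$ a.e. on $E$, i.e. $Xw=0$ a.e. in $\Om$. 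Since $w\in{\overset{o}{W}}^{1,p}_H(\Om)$ has vanishing horizontal gradient, $w=0$ a.e. (via the subelliptic Poincar\'e inequality on $\Om$ when $\Om$ is bounded; equivalently, a function in ${\overset{o}{W}}^{1,p}_H(\Om)$ with zero horizontal gradient must vanish). Therefore $\max\{v-u,0\}=0$ a.e., i.e. $u\geq v$ a.e. in $\Om$.

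I expect the only genuinely delicate point to be the admissibility of $w$ as a test function, because $u$ and $v$ are only assumed to lie in $W^{1,p}_{H,loc}(\Om)$ while $w$ is global; the cutoff-plus-mollification passage to the limit described above is what makes this work, and it is the reason the hypothesis is phrased in terms of membership in ${\overset{o}{W}}^{1,p}_H(\Om)$. Everything after that is the purely pointwise monotonicity inequality, which is insensitive to the Heisenberg structure beyond the presence of the inner product on the horizontal layer.
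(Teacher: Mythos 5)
The paper does not prove this theorem itself; it cites it from \cite{D}. Your proposal follows the standard monotonicity argument, and all the essential ingredients are present and correct: setting $w=\max\{v-u,0\}=-\min\{u-v,0\}\in{\overset{o}{W}}^{1,p}_H(\Om)$, inserting it (after suitable approximation) into the two variational inequalities, subtracting to expose the nonnegative monotonicity integrand $\langle |Xv|^{p-2}Xv-|Xu|^{p-2}Xu,\ Xv-Xu\rangle$ over $E=\{v>u\}$, concluding $Xw=0$ a.e., and finishing via Poincar\'e for ${\overset{o}{W}}^{1,p}_H(\Om)$ on a bounded $\Om$. This is the argument one expects from \cite{D}.

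The one place your write-up is thinner than it should be is exactly the step you flag as delicate. You want to pass from testing against compactly supported nonnegative functions to testing against $w$ itself, and you propose cutoffs $\chi_R$ followed by mollification. But $X(\chi_R w)=\chi_R\,Xw+w\,X\chi_R$, and to send $R\to\infty$ (i.e.\ to let the cutoffs exhaust $\Om$) in
\[
\int_\Om |Xu|^{p-2}\langle Xu,\ \chi_R\,Xw\rangle\,dg\ +\ \int_\Om |Xu|^{p-2}\langle Xu,\ w\,X\chi_R\rangle\,dg\ \geq\ 0
\]
you need both that $|Xu|^{p-1}|Xw|\in L^1(\Om)$ globally (to apply dominated convergence to the first term) and that the commutator term with $w\,X\chi_R$ tends to zero. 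Neither is automatic from $u\in W^{1,p}_{H,loc}(\Om)$ together with $w\in{\overset{o}{W}}^{1,p}_H(\Om)$: the flux $|Xu|^{p-2}Xu$ is only locally in $L^{p'}$, so the pairing against the global object $Xw$ may fail to converge absolutely near $\p\Om$. Your sentence asserting that ``letting $R\to\infty$ is then legitimate because the fluxes \dots pair against $L^p$ horizontal gradients on relatively compact subsets'' conflates local pairings (which are fine) with the global pairing one actually needs. The standard fixes are either to upgrade the hypothesis to $u,v\in W^{1,p}_H(\Om)$ (globally), which is how many sources state the result and is how it is applied in this paper, or to replace $w$ by a bounded truncation $\min\{w,k\}$ and exploit that both $u$ and $v$ can be truncated as well, so that the relevant integrands become genuinely integrable before passing to the limit in $k$. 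You should make this part of the argument precise rather than gesture at it; everything from the monotonicity identity onward is sound.

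A small additional remark: approximating $w$ in $W^{1,p}_H(\Om)$ by \emph{nonnegative} admissible test functions also deserves a word. One typically takes $\phi_k\in C^\infty_0(\Om)$ with $\phi_k\to w$ and replaces them with $\phi_k^+$, which remain admissible (Lipschitz, compact support) and still converge to $w^+=w$ in $W^{1,p}_H$; or one mollifies $\chi_R w$ with a nonnegative kernel as you suggest. Either way, state it. With these two points tightened, your proof is a correct account of the standard comparison principle.
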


Given a bounded open set $\Om \subset \mathbb{H}^n$ a point $g_0\in \p \Om$
is called \emph{regular} for the Dirichlet problem if for every
$\phi\in W^{1,p}_H(\Om)\cap C(\overline \Om)$, one has
\[
\underset{g\to g_0}{\lim} H^{\Om,p}_\phi(g)\ =\ \phi(g_0)\ .
\]

If every $g_0\in \p \Om$ is regular, then we say that $\Om$ is
regular for the Dirichlet problem. A basic Wiener type
estimate was proved in \cite{D}. From such result it follows that a sufficient geometric condition for $\Om$ to be
regular is that its exterior have uniform positive density. This means that
there exist $C>0$ and $R_0>0$ such that for every $g_0\in \p \Om$,
and $0<r<R_0$ one has
\begin{equation}\label{EC}
|\Om^c \cap B(g_0,r)| \geq C r^Q. \end{equation}

In fact, from the main result in \cite{D} one can infer that \eqref{EC} actually implies the H\"older continuity up to the boundary (with respect to the distances $d$ or $d_{cc}$ in \eqref{equivalence}) of the solution to the Dirichlet problem. For instance, any non-tangentially accessible (NTA) domain with respect to either one of the distances $d$ or $d_{cc}$ possesses a uniform exterior non-tangential point attached to every boundary point (for the notion of NTA domain see Definition \ref{D:NTA} below). This implies that any such domain satisfies \eqref{EC}, and therefore it is regular for the Dirichlet problem and the solution to such problem is in fact H\"older continuous up to the boundary. For a detailed study of the Dirichlet problem in NTA domains in the linear case $p=2$ we refer the reader to \cite{CG}. For the purpose of this paper the reader should keep in mind that in $\Hn$ every bounded domain whose boundary is Euclidean $C^{1,1}$ is NTA, and therefore satisfies \eqref{EC}. This interesting result was proved (in fact, for every Carnot group of step $r=2$), in \cite{MM2}, see Theorem \ref{T:MM} below.

\section{Singular solutions}\label{S:ss}

Let $1<p<\infty$. A distribution $\Gamma_p(\cdot,g)$ is called a
\emph{fundamental solution} of \eqref{pharm} with singularity at $g\in \mathbb{H}^n$ if: (i)
$\Gamma_p(\cdot,g)\in W^{1,p}_{H,loc}(\Hn\setminus\{g\})$;  (ii) $|X
\Gamma_p(\cdot,g)|^{p-1}\in L^1_{loc}(\Hn)$; and (iii)
\[ \int_{\Hn} |X \Gamma_p(\cdot,g)|^{p-2}<X
\Gamma_p(\cdot,g),X \phi> dg' = \phi(g), \]
 for every $\phi\in C^\infty_0(\Hn)$. We will need the following special case of a basic result from \cite{CDG2} (for the case $p=Q$ see also \cite{HH}).

\begin{thrm}\label{T:pfs}
For every $1<p<\infty$ the
function
\begin{equation}\label{pfs1}
\Gamma_p(g,g') = \Gamma_p(g',g) =
\begin{cases}
\frac{p-1}{Q-p} \sigma_p^{-\frac{1}{p-1}}
d(g,g')^{\frac{p-Q}{p-1}},\ \ p\not= Q, \\
\\
-\ \sigma_p^{-\frac{1}{p-1}} \log\ d(g,g'),\ \ \ p = Q,
\end{cases}
\end{equation}
with $g'\not=g$, is a fundamental solution of \eqref{pharm} with
singularity at $g\in \Hn$.
\end{thrm}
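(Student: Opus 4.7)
The plan is to reduce to showing that $u(g'):=\Gamma_p(g',e)$ is a fundamental solution with pole at $e$, since by the left-invariance of the vector fields \eqref{vf2} (which passes to $\mathcal L_p$) and by \eqref{isometry} for the gauge distance, composition with $L_{g}$ converts the general statement to the one with $g=e$. Moreover, \eqref{pfs1} already gives the symmetry $\Gamma_p(g,g')=\Gamma_p(g',g)$ since it depends only on $d(g,g')=N(g^{-1}g')$. Thus the work is to prove: (a) $u\in W^{1,p}_{H,loc}(\Hn\setminus\{e\})$ with $|Xu|^{p-1}\in L^1_{loc}(\Hn)$; (b) $\mathcal L_p u=0$ pointwise in $\Hn\setminus\{e\}$; (c) for every $\phi\in C^\infty_0(\Hn)$, $\int |Xu|^{p-2}\langle Xu,X\phi\rangle\, dg'=\phi(e)$ with the indicated normalizing constant $\sigma_p$.

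For (b), the backbone is the computation of $XN$. Using $N^4=|z|^4+16 t^2$ together with \eqref{vf2}, direct differentiation gives
\[
X_i N=\frac{|z|^2 x_i-4 y_i t}{N^3},\qquad X_{n+i} N=\frac{|z|^2 y_i+4 x_i t}{N^3},\qquad i=1,\dots,n,
\]
and squaring and summing produces the key identity $|XN|=|z|/N$. Writing $u=c\,N^\alpha$ with $\alpha=(p-Q)/(p-1)$ when $p\neq Q$ (respectively $u=-c\log N$ when $p=Q$), one then has $|Xu|=|c\alpha|\,N^{\alpha-2}|z|$, so
\[
|Xu|^{p-2}X_i u=c'\,N^{(p-1)\alpha-2p+3}\,|z|^{p-2}\,X_i N,
\]
with $(p-1)\alpha-2p+3=-Q$ exactly when $\alpha=(p-Q)/(p-1)$ (and the analogous degree arises from $-\log N$ when $p=Q$). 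Applying $X_i$ once more and summing over $i$ reduces to the single scalar identity
\[
\mathcal L N=\frac{Q-1}{N^3}|z|^2,
\]
which one verifies by a direct second-order computation from the formulas for $X_i N$, $X_{n+i}N$ together with \eqref{commHn}, and then combining with $|XN|^2=|z|^2/N^2$ makes all terms cancel.

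For (c), one argues as in the classical derivation of the fundamental solution. Fix $\phi\in C^\infty_0(\Hn)$ and split $\int=\int_{\{N>\epsilon\}}+\int_{\{N\leq \epsilon\}}$. The second piece tends to $0$ as $\epsilon\to 0^+$ by (a); indeed the estimate $|Xu|\leq C N^{\alpha-1}$ and the $Q$-homogeneity of Lebesgue measure under \eqref{dilHn} give $\int_{\{N\leq \epsilon\}}|Xu|^{p-1}|X\phi|\,dg'\to 0$. On $\{N>\epsilon\}$, pointwise $p$-harmonicity from (b) allows integration by parts to replace the integral by the boundary term on the gauge sphere $\{N=\epsilon\}$; the outward horizontal conormal is $XN/|XN|$, so the integrand reduces to $-c'\,N^{-Q}|z|^{p-2}\,|XN|^2\,\phi=-c'\,N^{-Q-2}|z|^p\,\phi$. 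Parametrizing $\{N=\epsilon\}$ via the dilations \eqref{dilHn} and using the $Q$-homogeneity of $dg'$, this boundary integral equals $\phi(e)$ times an explicit dimensional constant; the constant $\sigma_p$ in \eqref{pfs1} is chosen precisely to normalize that limit to $\phi(e)$.

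The main obstacle is (b): the delicate cancellation leading to $\mathcal L_p(N^\alpha)\equiv 0$ away from the origin is, unlike the Euclidean case, not a one-line consequence of radial symmetry — it relies on the very special algebraic structure of the Kor\'anyi gauge, specifically on the identity $|XN|^2=|z|^2/N^2$ and on $\mathcal L N=(Q-1)|z|^2/N^3$, both of which fail in a general Carnot group and even for distances other than the gauge in $\Hn$. A secondary subtlety is the case $p=Q$, where the exponent $\alpha$ degenerates and one must separately verify that the ansatz $-\log N$ gives pointwise $p$-harmonicity and produces the correct distributional normalization; fortunately the homogeneity degree $-Q$ already appears in $|Xu|^{p-2}X_i u$ and the scheme of (b)–(c) above carries through verbatim.
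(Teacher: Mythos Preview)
The paper does not prove this theorem: it is quoted as a special case of a result from \cite{CDG2} (with \cite{HH} for the conformal case $p=Q$), and no argument is supplied. Your proposal is a direct verification, and it is essentially the computation behind those references, so there is nothing substantive to compare.

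Your architecture is sound: the two identities $|XN|=|z|/N$ and $\mathcal L N=(Q-1)|z|^2/N^3$ are exactly the special features of the Kor\'anyi gauge that make the computation close, and together with the product-rule contribution $\sum_i X_i(|z|^{p-2})\,X_iN=(p-2)|z|^p/N^3$ they force $\mathcal L_p(N^\alpha)=0$ on $\Hn\setminus\{e\}$ precisely when $\alpha=(p-Q)/(p-1)$. One arithmetic slip: the displayed claim ``$(p-1)\alpha-2p+3=-Q$ exactly when $\alpha=(p-Q)/(p-1)$'' is incorrect --- the left side evaluates to $3-p-Q$, not $-Q$ --- and the same error propagates to your boundary integrand in (c). This does not damage the argument: what actually selects $\alpha$ is the requirement that $|Xu|^{p-2}Xu$ be homogeneous of degree $1-Q$ (equivalently, that the three terms in the horizontal divergence cancel), and with the corrected exponent the flux over $\{N=\epsilon\}$ still reduces, via the coarea identity $\int_{\{N<R\}}|XN|^p\,dg=R^Q\omega_p$, to $\phi(e)$ after normalization by $\sigma_p=Q\omega_p$.
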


In \eqref{pfs1} we have let $\sigma_p = Q \omega_p$, where
\[
\omega_p = \int_{B(e,1)} |X d(\cdot, e)|^p dg.
\]

\begin{dfn}\label{D:green}
Given a bounded domain $\Om \subset \Hn$ we say that a distribution
$G_{\Om,p}(\cdot, g) \geq 0$ is a \emph{Green function with singularity at
$g\in \Om$} for \eqref{pharm} if: (i) $G_{\Om,p}(\cdot,g)\in
W^{1,p}_{H,loc}(\Om\setminus\{g\})$; (ii) $|X
\Gp(\cdot,g)|^{p-1}\in L^1(\Om)$; (iii) $\phi\ \Gp(\cdot,g)\in
\overset{o}{W}^{1,p}_H(\Om)$, for any $\phi \in C^\infty_0(\Hn)$
such that $\phi \equiv 0$ in a neighborhood of $g$; and (iv)
\[ \int_{\Om} |X \Gp(\cdot,g)|^{p-2}<X
\Gp(\cdot,g),X \phi> dg' = \phi(g), \]
 for every $\phi\in C^\infty_0(\Om)$.
\end{dfn}

In \cite{DG} it was proved, among other things, that given any
regular bounded open set $\Om\subset \mathbb{H}^n$, there exists a Green
function $\Gp(\cdot,g)$ with singularity at $g\in \Om$. It was also
shown in \cite{DG} that a Green function satisfies an asymptotic
estimate near the singularity similar to that in Theorem \ref{T:pfs}.
From such estimate and the comparison
principle (Theorem \ref{T:ct}) we conclude that there exists a
constant $C(n,\Om,p)>0$ such that for every $g'\in \Om$, $g'\not=
g$,
\begin{equation}\label{gg} \Gp(g',g) \leq \begin{cases}
C \ d(g,g')^{(p-Q)/(p-1)},\ \ 1<p<Q,
\\
-C  \log d(g,g'),\ p=Q.
\end{cases}
\end{equation}

\section{NTA and ADP domains}\label{S:nta}

We now introduce the relevant classes of domains for the results in this paper. We begin with recalling a geometric condition introduced in \cite{LU}, \cite{CGN1}, \cite{CGN2}, \cite{CGN3}, which is reminiscent of the classical outer sphere condition of Poincar\'e \cite{P}. We recall that the notation $B(g,r)$ indicates the non-isotropic gauge ball with respect to the distance \eqref{hgl2}.
 
\begin{dfn}\label{D:uob}
We say that a bounded domain $\Om \subset \Hn$ satisfies the \emph{uniform
outer (interior) ball condition} if there exists $R_0>0$ such that
for every $g_0\in \p \Om$ and every $0<r<R_0$, there exists
$B(g_1,r) \subset \Hn\setminus \overline \Om$ ($B(g_1,r) \subset
\Om$) for which $g_0\in \p B(g_1,r)$. If $\Om$ satisfies both the
uniform outer and interior ball conditions, then we say that $\Om$
satisfies the \emph{uniform ball condition}.
\end{dfn}

We emphasize that, as we have mentioned in Section \ref{S:intro}, in $\Hn$ there exist real analytic domains, such as for instance $\Om = \{(z,t)\in \Hn\mid t > - M |z|^2\}$, for any fixed $M>0$, which fail to satisfy the outer tangent ball condition. In this particular case, the domain $\Om$ does not possess an outer tangent ball at the characteristic point $e = (0,0)$. In fact, in view of the parabolic dilations $\lambda \to (\lambda z,\lambda^2 t)$ of $\Hn$, from the geometric viewpoint of $\Hn$ the domain $\Om$ looks like a non-convex cone.  

For some of the results in this paper the uniform outer ball condition, or the uniform ball condition will not be needed on the whole of the boundary of a given domain, but just away from its characteristic set.  

\begin{dfn}\label{D:moduob}
We say that a bounded domain $\Om \subset \Hn$ satisfies the \emph{uniform
outer (interior) ball condition away from its characteristic set} $\Sigma_\Om$ if there exists $\epsilon>0$ such that
for every $g_0\in \p \Om\setminus \Sigma_\Om$ and every $0<r<\epsilon \, d(g_0,\Sigma_\Om)$, there exists
$B(g_1,r) \subset \Hn\setminus \overline \Om$ ($B(g_1,r) \subset
\Om$) for which $g_0\in \p B(g_1,r)$. If $\Om$ satisfies both the
uniform outer and interior ball conditions away from $\Sigma_\Om$, then we say that $\Om$
satisfies the \emph{uniform ball condition away from its characteristic set}.
\end{dfn}

Next, we recall the class of NTA (non-tangentially accessible) domains. In the Euclidean setting such class was introduced in \cite{JK}. We emphasize that the definition of NTA domain is purely metrical, i.e. it  can
be formulated in an arbitrary metric space. In the framework of metrics associated with a system of smooth vector fields satisfying the finite rank condition, a detailed study of such domains was developed in \cite{CG}, and we refer the reader
to that source for all relevant results. One should also consult the paper \cite{AS} for further generalizations. Here, we will focus on the special
yet basic setting of $\Hn$ with its gauge
metric. First, given a bounded domain $\Om\subset \Hn$, a ball $B(g,r)$ is
called $M${\emph{-nontangential}} in $\Om$ if
\[
\frac{r}M<d(B(g,r),\p\Om)<Mr.
\]
Given two points $g,g'\in\Om$, a sequence of $M$-nontangential balls
in $\Om$, $B_1,\dots,B_k$, will be called a Harnack chain of length
$k$ joining $g$ to $g'$ if $g\in B_1$, $g'\in B_k$, and $B_i\cap
B_{i+1}\not=\phi$ for $i=1,\dots,k-1$. It should be noted that
consecutive balls have comparable radii.

\begin{dfn}\label{D:NTA}
We say that a bounded domain $ \Om\subset \Hn$ is a
\emph{nontangentially accessible domain} (NTA domain, hereafter) if
there exist $M$, $R_0>0$ for which:
\begin{itemize}
\item[(i)] (Interior corkscrew condition) For any $g_0\in\partial \Om$ and $0<r\leq R_0$
there exists $A_r(g_0)\in \Om$ such that
$\frac{r}M<d(A_r(g_0),g_0)\leq r$ and $d(A_r(g_0),\partial
\Om)>\frac{r}M$. (This implies that $B(A_r(g_0),\frac{r}{2M})$ is
$3M$-nontangential.)\\

\item[(ii)] (Exterior corkscrew condition) $ \Om^c$ satisfies
property (i).\\

\item[(iii)] (Harnack chain condition) For any $\epsilon>0$ and $g,g'\in \Om$ such
that $d(g,\partial \Om)>\epsilon$, $d(g',\partial \Om)>\epsilon$,
and $d(g,g')<C\epsilon$, there exists a Harnack chain joining $g$ to
$g'$ whose length depends on $C$ but not on $\epsilon$.
\end{itemize}
\end{dfn}

In \cite{CG} it was proved that in every Carnot group of step $r=2$
the gauge balls are NTA domains. Subsequently, in \cite{MM2} the authors proved the following interesting result.

\begin{thrm}\label{T:MM}
In every Carnot group of step two, and hence in particular in $\Hn$, every 
(Euclidean) $C^{1,1}$ domain is
NTA. Furthermore, such regularity is sharp as there exist
$C^{1,\alpha}$ domains, $0<\alpha<1$,   which are not NTA.
\end{thrm}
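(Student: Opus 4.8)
\textbf{Proof proposal for Theorem \ref{T:MM}.}

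The plan is to reduce the question, via the non-isotropic dilations \eqref{dilHn} and left translations \eqref{isometry}, to a uniform statement about the intrinsic geometry of $\partial\Om$ near a single boundary point, and then to verify the three defining conditions of Definition \ref{D:NTA} at the relevant scale. First I would fix $g_0\in\partial\Om$ and, using a left translation, assume $g_0=e$; since $\partial\Om$ is Euclidean $C^{1,1}$, after an orthogonal rotation of the $z$-variables (which by Lemma \ref{L:inv} preserves the gauge distance, provided the rotation lies in $U(n)$) one can put the Euclidean inner normal in a standard position and write $\partial\Om$ locally as a graph $t=\varphi(z)$ or $x_1=\psi(x_2,\dots,t)$ with a uniform $C^{1,1}$ bound, the precise normalization depending on whether $e$ is a characteristic point. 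The key geometric input is that a $C^{1,1}$ bound on the Euclidean second fundamental form translates, after the anisotropic rescaling $\delta_\lambda$, into a \emph{scale-invariant} flatness estimate for $\partial\Om$ measured in the gauge metric: at scale $r$ the rescaled boundary $\delta_{1/r}(\partial\Om)$ stays within a controlled gauge-distance of a fixed model hypersurface (a vertical hyperplane in the non-characteristic case, and the paraboloid-type profile governed by the quadratic part of $\varphi$ in the characteristic case). This is where the step-two structure is used essentially — the extra $t$-direction has homogeneous degree $2$, so the quadratic Taylor remainder of a $C^{1,1}$ graph is exactly of the right order to be absorbed by $\delta_{1/r}$.

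With this flatness in hand I would construct the interior and exterior corkscrew points explicitly. For the interior corkscrew $A_r(g_0)$ one takes the image under $\delta_r$ (and the left translation) of a fixed point lying well inside the model region; the flatness estimate guarantees that for $r<R_0$ this point lies in $\Om$ with $d(A_r(g_0),\partial\Om)>r/M$ and $d(A_r(g_0),g_0)\le r$, with $M$ depending only on the $C^{1,1}$ character and on $n$. The exterior corkscrew is obtained symmetrically from a point in the complement of the model region; here one uses that both the model vertical hyperplane and the model paraboloid have nonempty complement with a fixed interior ball, so that the exterior corkscrew condition (ii) holds. Note that this already recovers the exterior density condition \eqref{EC}, and hence regularity for the Dirichlet problem. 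The Harnack chain condition (iii) is then proved by a chaining argument: given $g,g'\in\Om$ with $d(g,\partial\Om),d(g',\partial\Om)>\epsilon$ and $d(g,g')<C\epsilon$, one connects them by first moving each point to a corkscrew point at the appropriate scale along a path of non-tangential balls (using the flatness to see that the "shadow" region $\{d(\cdot,\partial\Om)>c\,d(\cdot,g_0)\}$ is a connected sandwich region whose width is comparable to its distance from $g_0$), and then joining the two corkscrew points through the bulk of $\Om$; the number of balls depends only on $C$ and the structural constants, not on $\epsilon$, because every estimate is scale-invariant under $\delta_\lambda$.

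The main obstacle I expect is the behavior near the characteristic set $\Sigma_\Om$. Away from $\Sigma_\Om$ the rescaled boundary converges to a vertical hyperplane and the corkscrew/Harnack constructions are essentially those of a Lipschitz graph; but near a characteristic point the blow-up is the paraboloid profile $t=\frac12\langle Hz,z\rangle + \langle b,z\rangle$ coming from the Hessian $H$ of $\varphi$ and the horizontal gradient, and one must check that this family of model regions — uniformly over all characteristic points and all scales — still admits interior and exterior corkscrews and a Harnack chain with \emph{uniform} constants. The delicate point is that, as the coefficient matrix $H$ degenerates, the paraboloid can become very flat in some horizontal directions; one must verify that the gauge metric, with its degenerate scaling, still "sees" enough room on both sides. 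This is handled by observing that a two-sided Euclidean $C^{1,1}$ bound forces $\|H\|$ to be bounded (but \emph{not} bounded below), and that the relevant non-tangential regions for the paraboloid $t>\frac12\langle Hz,z\rangle+\dots - \eta$ and its complement always contain fixed gauge balls of radius comparable to the distance to $e$, uniformly in $H$, because one can always retreat in the $t$-direction — the purely vertical escape route, of homogeneous degree $2$, is insensitive to the horizontal degeneration of $H$. Carrying out these uniform estimates for the whole family of model paraboloids, and patching the characteristic and non-characteristic regimes together into a single choice of $M$ and $R_0$, is the technical heart of the argument. The sharpness claim — that $C^{1,\alpha}$ with $\alpha<1$ fails — would be shown by exhibiting a graph whose $C^{1,\alpha}$-controlled but unbounded curvature at a characteristic point produces cusps in the intrinsic metric that destroy the corkscrew condition, in the spirit of the paraboloid example $\Om_M$ discussed in the introduction.
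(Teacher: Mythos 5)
Your proposal cannot be measured against the paper's own proof, because the paper does not prove Theorem~\ref{T:MM}. It is quoted verbatim as an imported result of Monti and Morbidelli (``Subsequently, in \cite{MM2} the authors proved the following interesting result''), and the present authors use it as a black box. There is simply no argument in this paper for you to match.

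Taken on its own terms, your sketch is a reasonable high-level blueprint of what a proof would have to accomplish, and the scaling heuristic (the step-two grading gives the quadratic Taylor remainder of a $C^{1,1}$ graph exactly homogeneous degree $2$, so it survives the anisotropic blow-up) is the right structural observation. But the places you flag as ``the technical heart'' are genuine gaps rather than details. First, the corkscrew and Harnack chain constants must be \emph{uniform over the characteristic set}, and your ``retreat in the $t$-direction'' argument for the exterior corkscrew near a characteristic point is where the difficulty lives: the blow-up paraboloid $t=\tfrac12\langle Hz,z\rangle$ need not separate $\Hn$ into two pieces each containing a fixed gauge ball at every scale when $H$ is indefinite or degenerate, and checking this requires a real estimate, not the appeal to the vertical escape route. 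Second, the Harnack chain condition (iii) is not a routine chaining/sandwich-region argument in this setting; the sub-Riemannian metric has degenerate directions at the boundary, and producing a chain of $M$-nontangential gauge balls of length independent of $\epsilon$ near a characteristic point is a delicate connectivity statement that your proposal asserts rather than proves. Third, the sharpness half of the theorem --- a concrete $C^{1,\alpha}$ domain, $\alpha<1$, failing NTA --- is entirely undemonstrated; gesturing at ``cusps in the intrinsic metric'' in the spirit of $\Om_M$ is not a construction, and the example actually has to be built and the failure of corkscrew or Harnack chain verified. In short, the outline is directionally sound but none of the substantive claims are established; since the theorem is cited, not proved, in this paper, the appropriate move here is to reference \cite{MM2} rather than to attempt a self-contained argument.
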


The class of ADP (admissible for the Dirichlet problem) domains was introduced in \cite{CGN1},
\cite{CGN2}, \cite{CGN3}, in connection with the study of the Dirichlet problem in the linear case $p=2$. We now recall the relevant definition.

\begin{dfn}\label{D:adp}
We say that a bounded domain is ADP if it is NTA and it satisfies the uniform outer ball condition.
\end{dfn}  

In the Euclidean setting every $C^{1,1}$ or convex domain is an example of an ADP domain. Thanks to Theorem \ref{T:MM} we have the following result.

\begin{prop}\label{P:adp}
In the Heisenberg group $\Hn$ every (Euclidean) $C^{1,1}$ domain which also satisfies the uniform outer ball condition is ADP.
\end{prop}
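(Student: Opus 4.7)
The plan is essentially a direct combination of the hypothesis with Theorem \ref{T:MM}. Recall that by Definition \ref{D:adp}, to establish that a bounded domain $\Om \subset \Hn$ is ADP, we need to verify two properties: that $\Om$ is NTA (in the sense of Definition \ref{D:NTA}), and that $\Om$ satisfies the uniform outer ball condition (Definition \ref{D:uob}).

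The second property is granted to us as a standing hypothesis in the statement, so there is nothing to do there. For the first property, I would simply invoke Theorem \ref{T:MM}: since $\Hn$ is a Carnot group of step two, and the domain $\Om$ is assumed to be of Euclidean class $C^{1,1}$, that theorem immediately yields that $\Om$ is NTA with respect to the gauge distance. Combining these two facts verifies both clauses of Definition \ref{D:adp}, and therefore $\Om$ is ADP.

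Since both hypotheses needed for ADP are essentially provided (one by assumption, one by the cited Theorem \ref{T:MM}), there is no substantial obstacle to overcome. The only potential subtlety would be checking that the NTA constants produced by Theorem \ref{T:MM} and the outer ball radius $R_0$ from the uniform outer ball hypothesis can both be used simultaneously, but since ADP is defined as the mere conjunction of the two conditions (with possibly different constants), no compatibility issue arises. The proof is therefore essentially a one-line citation.
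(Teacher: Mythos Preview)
Your proposal is correct and matches the paper's own treatment: the proposition is stated immediately after Theorem \ref{T:MM} with the phrase ``Thanks to Theorem \ref{T:MM} we have the following result,'' and no further proof is given. Your unpacking of Definition \ref{D:adp} into its two clauses and verification of each is exactly the intended one-line argument.
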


For instance, every (Euclidean) convex and $C^{1,1}$ domain is ADP. This follows from the fact, proved in \cite{LU}, that Euclidean convexity implies the uniform outer ball condition. The same property holds, more in general, in every Carnot group of step two, see \cite{CGN2}.

\section{Proof of Theorems \ref{T:eaADP} and \ref{T:pimproved}}\label{S:bb}

In this section we provide the proof of Theorems \ref{T:eaADP} and \ref{T:pimproved}. We begin by establishing a result which shows that a nonnegative $p$-harmonic function which vanishes near a boundary point at which there exists a gauge ball tangent from the outside, must vanish at most at a rate which is linear with respect to the  distance. In the linear case $p=2$ predecessors of this result were obtained in \cite{LU},
\cite{CGN2}, \cite{CGN3}. Hereafter, given an open set $\Om\subset\Hn$   we will use the notation
$$\Delta(g_0, R)=\partial\Om\cap B(g_0,R)$$
for a surface ball centered at $g_0\in\partial\Om$ with radius $R>0$.

\begin{proof}[Proof of Theorem \ref{T:eaADP}]
We will consider only the case $1<p<Q$ as the case $p=Q$ can be proceeded similarly.
Let $0<r<R_0/6$, where  $R_0$ is the smallest among the  $R_0$'s appeared in Definitions \ref{D:uob} and \ref{D:NTA}.
By the uniform Harnack inequality in \cite{CDG1} we know that $u>0$
in $\Om\cap B(g_0, 6r)$. Again by the uniform Harnack inequality we conclude that
\[ u(A_{5r}(g_0))\ \leq \ C\ u(A_r(g_0))\ , \] for some $C>0$
depending only on $n$ and $p$. By the Carleson estimate in
\cite{AS} which is valid for uniform domains, and hence for NTA
domains, in Carnot groups, we obtain for a constant $C =
C(n,\Om,p)>0$ such that
\begin{equation}\label{ce} u(g) \leq C u(A_{5r}(g_0)) \leq C u(A_{r}(g_0)) \ \text{for every}\ g\in \Om \cap B(g_0,5r).
\end{equation}
Fix now a point $g\in \Om\cap B(g_0,r)$ and let $\overline g\in \p
\Om$ be such that $d(g,\overline g) = d(g,\p \Om)$. Without loss of
generality we can assume that $d(g,\p \Om)\leq \frac{r}{2}$, otherwise the
conclusion follows immediately from \eqref{ce}. By the assumptions
on $\Om$ there exists an outer ball $B(g_1,\frac{r}{2})$ tangent to $\p \Om$
at $\overline g$. Since $d(g,\p \Om)\leq \frac{r}{2}$,  by the triangle inequality
\[ d(g_1,g_0)  \leq d(g_1,\overline g) + d(\overline g,g_0) \leq \frac{r}{2} + d(\overline g,g) +
d(g,g_0)\leq 2 r. \]
This implies that
\begin{equation}\label{i}
\Om \cap B(g_0,r) \subset \Om \cap B(g_1,3r)\subset \Om \cap
B(g_0,5r).
\end{equation}
By this inclusion and \eqref{ce} we infer that
\begin{equation*}
\frac{u(g')}{u(A_r(g_0))} \leq C,\ \ \text{for every}\ g'\in \Om
\cap B(g_1,3r).
\end{equation*}
On the other hand, if we consider the function
\[
f(g') = \frac{r^{(p-Q)/(p-1)} -
d(g',g_1)^{(p-Q)/(p-1)}}{r^{(p-Q)/(p-1)} - (3r)^{(p-Q)/(p-1)}} =
\frac{d(\overline g,g_1)^{(p-Q)/(p-1)} -
d(g',g_1)^{(p-Q)/(p-1)}}{r^{(p-Q)/(p-1)} - (3r)^{(p-Q)/(p-1)}} \ ,
\]
then by Theorem \ref{T:pfs} $f$ is $p$-harmonic in
$\Hn\setminus\{g_1\}$, $f\geq 0$ in $\overline \Om$, and $f\equiv 1$
on $\p B(g_1,3r) \cap \overline \Om$. Since by assumption $u$ vanishes continuously on $\p \Om \cap B(g_1,3r)$, by \eqref{ce} and the
comparison principle (Theorem \ref{T:ct}) we conclude that
\begin{equation}\label{ce3}
\frac{u(g')}{u(A_r(g_0))} \leq C f(g'),\ \ \text{for every}\
g'\in \Om \cap B(g_1,3r).
\end{equation}
From \eqref{ce3} and \eqref{i} we find in particular
\[
\frac{u(g)}{u(A_r(g_0))} \leq C f(g), \ \ \text{for every}\
g\in \Om \cap B(g_0,r).
\]
To complete the proof it will thus suffice to show that
\[
f(g) \leq C \frac{d(g,\overline g)}{r}.
\]
Applying the mean value theorem to the function $h(s) =
s^{(p-Q)/(p-1)}$, we find
\[
f(g) \leq C(Q,p)\ \frac{|d(g,g_1) - d(\overline g,g_1)|}{r} \leq
C(Q,p) \frac{d(g,\overline g)}{r}.
\]
This yields the desired conclusion.

\end{proof}

Next we prove a result which will be needed in the proof of Theorem \ref{T:pimproved}.

\begin{prop}\label{P:ea}
Let $\Om\subset \Hn$ be a bounded domain,
and let $1<p<\infty$. If for a given $g_0\in \p \Om$ there exists an
outer ball $B(g_1,r)\subset \Hn\setminus \overline \Om$ such that
$g_0\in \p B(g_1,r)$, then there exists $C>0$, depending only on
$n$ and $p$, such that if $\phi\in W^{1,p}_H(\Om) \cap C(\overline \Om)$,
$\phi \equiv 0$ on $\Delta(g_0,2r)$, then  for every $g\in \Om$ one
has
\[
|H^{\Om,p}_\phi(g)| \leq C \frac{d(g,g_0)}{r} \underset{\p
\Om}{max} |\phi|.
\]
\end{prop}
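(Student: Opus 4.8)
The plan is to follow the same barrier argument used in the proof of Theorem \ref{T:eaADP}, but now with a genuine two-sided control since $\phi$ need not be nonnegative. First I would reduce matters to a bound on $\Om \cap B(g_1,3r)$: for $g \in \Om$ with $d(g,g_0) \ge r$ (say) the inequality is trivial because $|H^{\Om,p}_\phi| \le \max_{\p\Om}|\phi|$ by the comparison principle (Theorem \ref{T:ct}), so it suffices to treat $g \in \Om \cap B(g_0,r) \subset \Om \cap B(g_1,3r)$.

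Next I would introduce the comparison function. Let $M_0 = \max_{\p\Om}|\phi|$ and set, as in the proof of Theorem \ref{T:eaADP},
\[
f(g') = \frac{d(g_0,g_1)^{(p-Q)/(p-1)} - d(g',g_1)^{(p-Q)/(p-1)}}{r^{(p-Q)/(p-1)} - (3r)^{(p-Q)/(p-1)}} = \frac{r^{(p-Q)/(p-1)} - d(g',g_1)^{(p-Q)/(p-1)}}{r^{(p-Q)/(p-1)} - (3r)^{(p-Q)/(p-1)}},
\]
which by Theorem \ref{T:pfs} is $p$-harmonic in $\Hn \setminus \{g_1\}$, nonnegative on $\overline\Om$, vanishes on $\p B(g_1,r) \supset \Delta(g_0,2r)$ portion, and equals $1$ on $\p B(g_1,3r)$. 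I would then compare $H^{\Om,p}_\phi$ against $\pm M_0 f$ on the domain $W = \Om \cap B(g_1,3r)$. On $\p W$ we have two kinds of boundary points: those on $\p\Om \cap B(g_1,3r)$, where $\phi \equiv 0$ provided $B(g_1,3r) \cap \p\Om \subset \Delta(g_0,2r)$ — this holds by the triangle inequality since $g_0 \in \p B(g_1,r)$ — so $H^{\Om,p}_\phi = 0 \le M_0 f$ there; and those on $\Om \cap \p B(g_1,3r)$, where $f \equiv 1$ so $H^{\Om,p}_\phi \le M_0 = M_0 f$. Hence $\min\{M_0 f - H^{\Om,p}_\phi, 0\} \in \overset{o}{W}^{1,p}_H(W)$, and the comparison principle gives $H^{\Om,p}_\phi \le M_0 f$ on $W$; applying the same argument to $-\phi$ yields $|H^{\Om,p}_\phi| \le M_0 f$ on $W$.

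Finally I would estimate $f(g)$ linearly. Since $g_0 \in \p B(g_1,r)$ we have $d(g_0,g_1) = r$, so by the mean value theorem applied to $s \mapsto s^{(p-Q)/(p-1)}$ on the interval between $d(g,g_1)$ and $r$ (both comparable to $r$ since $g \in B(g_1,3r)$ and, for the lower bound, $g\in\Om$ forces $d(g,g_1)\ge$ something comparable — or more simply, just use that the denominator is $\cong r^{(p-Q)/(p-1)}$ and the numerator is $\le C r^{(p-Q)/(p-1)-1}|d(g,g_1) - r|$), together with $|d(g,g_1) - r| = |d(g,g_1) - d(g_0,g_1)| \le d(g,g_0)$, one obtains $f(g) \le C\, d(g,g_0)/r$ with $C = C(n,p)$. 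Combining, $|H^{\Om,p}_\phi(g)| \le C\, d(g,g_0)/r \cdot M_0$ for $g \in W$, and the trivial bound handles the rest, completing the proof.

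The main technical point to be careful about is the geometry ensuring $B(g_1,3r) \cap \p\Om \subset \Delta(g_0,2r)$ and the uniform two-sided comparability $d(g,g_1) \cong r$ needed to make the mean value theorem estimate clean; the former follows from $d(g_0,g_1)=r$ and the triangle inequality, while for the latter one only truly needs the upper bound $d(g,g_1) \le 3r$ plus the observation that the numerator of $f$ is a difference of values of a $C^1$ function of $s$ with derivative bounded by $C r^{(p-Q)/(p-1)-1}$ on $[r,3r]$, which is all that is required. No genuinely new ideas beyond those in the proof of Theorem \ref{T:eaADP} are needed; the only novelty is invoking the comparison principle on both sides to drop the sign assumption on $\phi$.
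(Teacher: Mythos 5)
The overall barrier strategy you employ is the right one, but the execution diverges from the paper's in a way that introduces a genuine error, and the paper's route is actually cleaner. The paper does \emph{not} localize to the ring $W = \Om \cap B(g_1,3r)$: it works on all of $\Om$ at once, with the comparison function
\[
f(g') = \frac{r^{(p-Q)/(p-1)} - d(g',g_1)^{(p-Q)/(p-1)}}{r^{(p-Q)/(p-1)} - (2r)^{(p-Q)/(p-1)}}\,,
\]
and uses the key observation that $f\geq 1$ on $\overline\Om\cap B(g_1,2r)^c$. Because of this, the comparison $|H^{\Om,p}_\phi|\leq f$ can be read off directly on $\p\Om$: on the surface ball where $\phi$ vanishes one has $|\phi|=0\leq f$, and elsewhere one has $|\phi|\leq \max_{\p\Om}|\phi|=1\leq f$. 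No ``lid'' boundary $\Om\cap\p B(g_1,3r)$, no reduction to $g\in\Om\cap B(g_0,r)$, and no requirement that $\p\Om\cap B(g_1,3r)$ be contained in the surface ball are needed. Your localization imports the structure from the proof of Theorem \ref{T:eaADP}, where it is forced by the merely local Carleson bound; here the global bound $|H^{\Om,p}_\phi|\leq \max_{\p\Om}|\phi|$ makes it superfluous.

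The concrete error is your claim that ``$B(g_1,3r)\cap\p\Om\subset\Delta(g_0,2r)$, which holds by the triangle inequality since $g_0\in\p B(g_1,r)$.'' It does not. For $g'\in\p\Om$ with $d(g',g_1)<3r$, the triangle inequality only gives
\[
d(g',g_0)\leq d(g',g_1)+d(g_1,g_0) < 3r + r = 4r\,,
\]
so the inclusion you actually get is $B(g_1,3r)\cap\p\Om\subset\Delta(g_0,4r)$, not $\Delta(g_0,2r)$. A boundary point on the far side of the outer ball from $g_0$ can sit at gauge distance close to $4r$ from $g_0$ while still lying in $B(g_1,3r)$. Thus the step where you assert $\phi\equiv 0$ on $\p\Om\cap B(g_1,3r)$ is not justified by the stated hypothesis, and the comparison on $\p W$ breaks down there. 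Using the paper's $f$ (outer radius $2r$ rather than $3r$) and comparing on all of $\Om$ avoids introducing the spurious ``lid'' and, more importantly, shifts the burden from an inclusion of boundary balls (which fails) to the inequality $f\geq 1\geq|\phi|$ wherever $\phi$ is not known to vanish, which is exactly what the barrier is built to give. The mean-value-theorem estimate of $f(g)$ at the end of your argument is fine and matches the paper's.
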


\begin{proof}
We can assume that $\underset{\p \Om}{max}\ |\phi|
>0$, otherwise there is nothing to prove. Since if $u$ is a weak solution of \eqref{pharm}, then for $\lambda >0$ one has $\mathcal L_p(\lambda u) = \lambda^{p-1} \mathcal L_p u = 0$, and so also $\lambda u$ is a weak solution, by considering $\psi = \phi/\underset{\p \Om}{max}\ |\phi|$, we
can also assume that $\underset{\p \Om}{max}\ |\phi| = 1$. By the
comparison principle (Theorem \ref{T:ct}) we obtain $|H^{\Om,p}_\phi|\leq 1$ in $\Om$. We
only discuss the case $1<p<Q$, since the cases $p=Q$ and $p>Q$ can
be treated in a completely analogous fashion. Consider the function
\[
f(g) = \frac{r^{(p-Q)/(p-1)} -
d(g,g_1)^{(p-Q)/(p-1)}}{r^{(p-Q)/(p-1)} - (2r)^{(p-Q)/(p-1)}},\ \
\ g\in \Om.
\]
Clearly, $f\geq 0$ in $\overline \Om$, $f(g_0) = 0$, $f\equiv 1$ on $\overline \Om
\cap \p B(g_1,2r)$, whereas $f\geq 1$ in $\overline \Om \cap
B(g_1,2r)^c$. Thanks to Theorem \ref{T:pfs}, $f$ is $p$-harmonic in
$\Om$. By Theorem \ref{T:ct} we obtain $|H^{\Om,p}_\phi|\leq f$ in
$\Om$. To finish the proof it thus suffices to show that
\[
f(g) \leq C  \frac{d(g,g_0)}{r},\ \ \ g \in \Om.
\]
Since $g_0\in \p  B(g_1,r)$ we have for every $g\in \Om$
\[
f(g) = \frac{d(g_0,g_1)^{(p-Q)/(p-1)} -
d(g,g_1)^{(p-Q)/(p-1)}}{r^{(p-Q)/(p-1)} - (2r)^{(p-Q)/(p-1)}}.
\]
From this observation, the sought for conclusion follows from a
standard application of the mean value theorem to the function $h(t)
= t^{(p-Q)/(p-1)}$ once we keep into account that $d(g,g_1)\geq r$
for $g\in \Om$.

\end{proof}

We can now present the proof of Theorem \ref{T:pimproved}.

\begin{proof}[Proof of Theorem \ref{T:pimproved}]
Once again, we only discuss the case $1<p<Q$, leaving
the details of the  case $p=Q$ to the interested reader. Let
$\Om\subset \Hn$ be a bounded domain satisfying the uniform outer
ball condition, and fix $g, g' \in \Om$. Let $\Gp(\cdot,g)$ be its
Green function with singularity at $g$. We begin by proving (i). If
either
\[
d(g',\p \Om) \geq \frac{d(g,g')}{4},\ \ \text{or}\ \ d(g',\p \Om)
\geq R_0,
\]
then from \eqref{gg} we obtain for some constant $C^*(n,\Om,p)>0$
\begin{equation*}
\Gp(g',g) \leq C^* d(g',\p \Om) d(g,g')^{(1-Q)/(p-1)},
\end{equation*}
and we are done.
We thus assume \[ d(g',\p \Om) < \frac{d(g,g')}{4},\ \
\text{and}\ \ d(g',\p \Om) < R_0, \] and set \[
r =
min\left(\frac{d(g,g')}{8},\frac{R_0}{2}\right).
\]
Notice that
$d(g',\p \Om) < 2r$.
Let $g_0\in \partial\Om$ be such that $d(g',\p \Om) = d(g',g_0)$. By the
assumption that we have made on $\Om$ there exists a ball
$B(g_1,r)\subset \Om^c$ such that $g_0\in \p B(g_1,r)$. We consider
the bounded open set $\Om_r = \Om \cap B(g_1,4r)$ and pick a
function $\phi\in W^{1,p}_H(\Om_r)\cap C(\overline{\Om_r})$, such
that $0\leq \phi \leq 1$, $\phi = 1$ on $\p B(g_1,4r) \cap \Om$ and
$\phi = 0$ on $\p \Om \cap B(g_1,2r)$. Let $H^{\Om_r,p}_\phi$ be the
solution to the Dirichlet problem for \eqref{pharm} with boundary
datum $\phi$. Since $\Om_r$ has a  outer tangent ball at $g_0$,
by Proposition \ref{P:ea} we obtain for every $g''\in \Om_r$
\[
|H^{\Om_r,p}_\phi(g'')| \leq C \frac{d(g'',g_0)}{r}.
\]

We now notice that the point $g'$ belongs to $\Om_r$. One has in
fact \[ d(g',g_1) \leq d(g',g_0) + d(g_0,g_1) = d(g',\p \Om) + r <
2r + r = 3r.
\]

We thus have
\begin{equation}\label{step2}
|H^{\Om_r,p}_\phi(g')| \leq C \frac{d(g',g_0)}{r} = C
\frac{d(g',\p \Om)}{r}.
\end{equation}

From the triangle inequality and $d(g',g_1) < 3r$ we find
\[
d(g,g_1) \geq d(g,g') - 3r \geq d(g,g') - \frac{3}{8} d(g,g') =
\frac{5}{8} d(g,g') \geq 5r,
\]
or equivalently
\[
g \in \Hn \setminus B(g_1,5r).
\]

For $g^*\in \Om_r$ we now define
\[
w(g^*) = C^{-1}\left(\frac{d(g,g')}{8}\right)^{(Q-p)/(p-1)}
\Gp(g^*,g),
\]
where $C>0$ is the constant in \eqref{gg}. Since $g\not\in \Om_r$, from Theorem \ref{T:pfs} we
see that $w$ is $p$-harmonic in $\Om_r$. Moreover, when $g^* \in \p
B(g_1,4r)$ we have \[ d(g^*,g) \geq d(g,g_1) - d(g^*,g_1) \geq
5r - 4r \geq  \frac{d(g,g')}{8}.
\]

We thus obtain in view of \eqref{gg} for any $g^*\in \p \Om_r \cap
\Om$
\[
w(g^*) \leq C^{-1}\left(\frac{d(g,g')}{8}\right)^{(Q-p)/(p-1)} C
d(g^*,g)^{(p-Q)/(p-1)} \leq 1.
\]

On the other hand $w = 0$ on $\Om_r \cap \p \Om$. If instead we look
at $H^{\Om_r,p}_\phi$, then we have $H^{\Om_r,p}_\phi = 1$ on $\p
\Om_r \cap \Om$, whereas $H^{\Om_r,p}_\phi \geq 0$ on $\Om_r \cap \p
\Om$. By Theorem \ref{T:ct} we conclude that  $w\leq
H^{\Om_r,p}_\phi $ in $\Om_r$. In particular, we must have $w(g')
\leq H^{\Om_r,p}_\phi(g')$. Combining this with \eqref{step2} we
finally obtain
\[
C^{-1}\left(\frac{d(g,g')}{8}\right)^{(Q-p)/(p-1)} \Gp(g',g) \leq C
\frac{d(g',\p \Om)}{r}.
\]

To reach the desired conclusion it now suffices to observe that
$$\frac{d(g,g')}{r} \leq max\left(8,\frac{2 diam(\Om)}{R_0}\right) =
C(\Om).$$

This proves part (i) of the theorem.
Next, we prove part (ii). Suppose that $\Gp$ is symmetric, i.e.,
$\Gp(g',g) = \Gp(g,g')$. But then from part (i) we obtain for every
$g,g'\in \Om$
\begin{equation}\label{step3}
\Gp(g',g) = \Gp(g,g') \leq C \frac{d(g,\p
\Om)}{d(g,g')^{(Q-1)/(p-1)}}.
\end{equation}

We now argue exactly as in the proof of part (i) except that we
define \[ w(g^*) = C^{-1} d(g,\p \Om)^{-1} d(g,g')^{(Q-1)/(p-1)}
\Gp(g^*,g),\ \ g^*\in \Om_r.
\]

Using \eqref{step3} instead of \eqref{gg} we reach the desired
conclusion.

The proofs of (iii) and (iv) are left to the reader.

\end{proof}

\section{Non-characteristic segments}\label{S:qs1}

The remainder of this paper is devoted to proving Theorems \ref{T:eb} and \ref{T:cp}.  Before we can do so, however, we need to develop some
preliminary delicate analysis aimed at constructing, away from the characteristic set of any $C^{1,1}$ domain $\Om\subset \Hn$, a suitable family of
paths connecting a given non-characteristic point $\overline g\in
\p \Om$ to a point $g(\lambda)\in  \Om$ which is the center of an
interior tangent gauge ball at $\overline g$. Among the important
features of these paths are: 1) The fact that for every $\lambda$
the point $\overline g$ realizes the distance of $g(\lambda)$ to
$\p \Om$; 2) A quasi-segment property with respect to the gauge
distance holds along the path itself. These paths will play a
crucial role in the proof of Theorems \ref{T:eb}, \ref{T:cp}.

In what follows, we will use the notation $z = (x,y)$, $z_0 = (x_0,y_0)$ for points of $\R^{2n}$. Given a vector $\omega = (a,b)\in \R^{2n}\setminus \{0\}$, we will indicate with
\[
H_\omega = \{(z,t)\in \Hn\mid <z,\omega> = <a,x> + <b,y>  >0\},
\]
the vertical half-space whose boundary will be denoted by
\[
\Pi_\omega =  \{(z,t)\in \Hn\mid <z,\omega>  = 0\}.
\]
Without restriction we will assume throughout this section that 
\[
|\omega|^2 = |a|^2 + |b|^2 = 1.
\]
We observe explicitly that the vertical hyperplane $\Pi_\omega$ has empty characteristic set.
 
\begin{lemma}\label{L:vp}
Consider the vertical half-space $H_{\omega}$.
Given a point $g = (z_0,t_0)\in H_\omega$,  one has
\[
d(g,\Pi_\omega) = d(g,\overline g)  = <z_0,\omega>,
\]
where  $\overline g = (\overline z,\overline t)$, with
\begin{equation}\label{dvp} \overline z = z_0 - <z_0,\omega> \omega, \ \ \ \overline t = t_0 + \frac{1}{2} <z_0,\omega><z_0^\perp,\omega>. \end{equation}
Furthermore, one has
\begin{equation}\label{dv5}
B(g,<z_0,\omega>) \subset H_\omega,\ \ \text{and}\ \ \overline g \in \p
B(g,<z_0,\omega>) \cap \Pi_\omega.
\end{equation}
Finally, the straight half-line in $\R^{2n+1}$ originating at $\overline g$ and
parallel to the vector $g - \overline g$
 \begin{equation}\label{dv7}
 g(\lambda) = \left(
 z_0 + (\lambda -1)<z_0,\omega>\omega,t_0 - \frac{\lambda-1}{2}<z_0, \omega><z_0^\perp,\omega>\right),\ \ \lambda \ge 0,
 \end{equation}
possesses the property of being a \emph{segment} with respect to the
gauge distance. By this we mean that for every $\lambda \geq 1$ we have
\begin{equation}\label{dv8}
d(g(\lambda),\overline g) = d(g(\lambda),g) + d(g,\overline g).
\end{equation}

\end{lemma}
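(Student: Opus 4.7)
The plan is to work directly from the explicit gauge-distance formula \eqref{hgl2}, using only three elementary facts: (a) $|\omega|=1$ combined with Cauchy--Schwarz; (b) the identity $\langle u,v^\perp\rangle = -\langle u^\perp,v\rangle$, which is immediate from $J^T=-J$; and (c) $\langle v,v^\perp\rangle=0$ for every $v$. Throughout I abbreviate $c := \langle z_0,\omega\rangle > 0$. One could alternatively reduce to the special case $\omega=e_1$ via Lemma \ref{L:inv} (since $U(n)$ acts transitively on the unit sphere of $\R^{2n}$), which trivializes the bookkeeping, but the direct computation is short enough that such a reduction is inessential.

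First I would establish \eqref{dvp} and $d(g,\Pi_\omega)=c$. For any $g'=(z',t')\in\Pi_\omega$, formula \eqref{hgl2} gives
\[
d(g,g')^4 = |z'-z_0|^4 + 16\bigl(t'-t_0 + \tfrac12\langle z',z_0^\perp\rangle\bigr)^2.
\]
Since $\langle z',\omega\rangle=0$, Cauchy--Schwarz gives $|z'-z_0|\ge|\langle z'-z_0,\omega\rangle|=c$, with equality only when $z'-z_0$ is a scalar multiple of $\omega$, forcing $z'=z_0-c\omega=\overline z$. Both summands being nonnegative, the overall minimum is $c$, and it is attained at the unique $t'$ making the second summand vanish; using (c), this value is precisely the $\overline t$ of \eqref{dvp}. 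The inclusion \eqref{dv5} is then essentially automatic: any $g''=(z'',t'')$ with $d(g,g'')<c$ satisfies $|z''-z_0|<c$, hence $\langle z'',\omega\rangle\ge c-|\langle z''-z_0,\omega\rangle|>0$, so $g''\in H_\omega$; and $\overline g\in\partial B(g,c)\cap\Pi_\omega$ by the previous step.

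The substantive part is the segment property \eqref{dv8}. I would compute $d(g(\lambda),\overline g)$ and $d(g(\lambda),g)$ separately from \eqref{hgl2}, and show that in both cases the time-correction term in the second summand vanishes identically. Write $z_\lambda, t_\lambda$ for the components of $g(\lambda)$; the $z$-differences are $z_\lambda-\overline z=\lambda c\,\omega$ and $z_\lambda-z_0=(\lambda-1)c\,\omega$, contributing $(\lambda c)^4$ and $((\lambda-1)c)^4$ respectively. Expanding $\langle\overline z,z_\lambda^\perp\rangle$ and $\langle z_0,z_\lambda^\perp\rangle$ via (b) together with the orthogonalities $\langle z_0,z_0^\perp\rangle = \langle\omega,\omega^\perp\rangle=0$ collapses these to $-\lambda c\langle z_0^\perp,\omega\rangle$ and $-(\lambda-1)c\langle z_0^\perp,\omega\rangle$. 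These cancel precisely against the $t$-differences $\overline t-t_\lambda=\tfrac{\lambda}{2}c\langle z_0^\perp,\omega\rangle$ and $t_0-t_\lambda=\tfrac{\lambda-1}{2}c\langle z_0^\perp,\omega\rangle$ coming out of the definitions of $\overline t$ and $t_\lambda$. Hence $d(g(\lambda),\overline g)=\lambda c$, and for $\lambda\ge 1$ also $d(g(\lambda),g)=(\lambda-1)c$, so their sum equals $\lambda c=d(g(\lambda),\overline g)$, which is \eqref{dv8}.

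The only delicate point is the sign bookkeeping in the $J$-antisymmetry expansion in the last step; otherwise the argument is purely mechanical, and the non-trivial phenomenon (the vanishing of the time-correction terms simultaneously for both sub-distances) is what makes the straight line \eqref{dv7} a genuine metric segment despite the non-commutativity of the group law.
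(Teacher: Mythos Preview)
Your proof is correct, and for the segment property \eqref{dv8} it coincides with the paper's computation. The first two parts, however, follow a more elementary route than the paper. For the identification of $\overline g$ and the value $d(g,\Pi_\omega)=c$, the paper minimizes $f(z,t)=d(g,(z,t))^4$ on $\Pi_\omega$ via Lagrange multipliers, whereas your Cauchy--Schwarz argument $|z'-z_0|\ge|\langle z'-z_0,\omega\rangle|=c$ reaches the same conclusion more directly and transparently identifies the equality case. For the inclusion $B(g,c)\subset H_\omega$, the paper argues geometrically: gauge balls are Euclidean convex (being left-translates of a convex set), and $\partial B(g,c)$ is tangent to $\Pi_\omega$ at $\overline g$, the latter verified by computing the Euclidean gradient of $F(z,t)=d(g,(z,t))^4-c^4$ at $\overline g$ and checking it is parallel to $\omega$. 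Your one-line argument $|z''-z_0|<c\Rightarrow\langle z'',\omega\rangle>0$ bypasses both convexity and the gradient computation. The paper's approach, on the other hand, yields the tangency of $\partial B(g,c)$ to $\Pi_\omega$ as an explicit byproduct, which is conceptually useful later (e.g.\ in Theorem \ref{L:vptb}); your argument gives the set-theoretic inclusion but not this differential-geometric statement.
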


\begin{proof}
Given a point $g = (z_0,t_0)\in H_\omega$ consider the fourth
power of the gauge distance of $g$ from a generic point
$(z,t)\in \Pi_\omega$. Denoting by $f(z,t)$ such function, we obtain from \eqref{hgl2}
\begin{equation*}
f(z,t)  =  d((z_0,t_0), (z,t))^4 
= |z-z_0|^4 + 16 \left(t-t_0 +\frac{1}{2}<z_0^\perp,z>\right)^2.
\end{equation*}
Since we want to minimize $f$ subject to the constraint that $(z,t)\in \Pi_\omega$, by the method of Lagrange multipliers we see that the critical points of $f$, subject to the constraint $<z,\omega> = 0$, are the solutions of
\begin{equation}\label{dv2}
\begin{cases}
4 |z-z_0|^2(z-z_0) = \lambda \omega,
\\
t - t_0 + \frac{1}{2}<z_0^\perp,z> = 0.
\end{cases}
\end{equation}
Taking the inner product of the first equation in \eqref{dv2} with $\omega$, we easily recognize that it must be
\[
\lambda = - 4 |z-z_0|^2 <z_0,\omega>.
\]
The value $\lambda = 0$ must be discarded, as in view of \eqref{dv2} it gives $z = z_0$, $t = t_0$, and therefore we would conclude that $f$ has its minimum value $(=0)$ at such point. Now, for $\lambda \not= 0$ we obtain $z\not= z_0$ from \eqref{dv2}, and therefore we conclude that the point  $\overline g = (\overline z, \overline t) \in \Pi_\omega$, at which $f$ attains its maximum value, has coordinates 
\[
\overline z = z_0 - <z_0,\omega> \omega,\ \ \ \ \overline t = t_0 + \frac{1}{2} <z_0,\omega><z_0^\perp,\omega>,
\]
which proves \eqref{dvp}. With this information in hands, a simple computation shows that
\[
d(g,\overline g) = d(g,\Pi_\omega) = <z_0,\omega>.
\]
From the latter equation and from \eqref{dvp} it follows immediately that
\begin{equation*}
\overline g \in \p B(g,<z_0,\omega>) \cap \Pi_\omega,
\end{equation*}
thus proving the second part of \eqref{dv5}.
For the first part of \eqref{dv5}, we need to show that
\[
 B(g,<z_0,\omega>) \subset
H_\omega.
\]
To see this inclusion it suffices to observe that: 1) Every gauge ball
is convex (in the Euclidean sense); 2) The manifold $\p B(g,<z_0,\omega>)$ is
tangent to the vertical plane $ \Pi_\omega$ at $\overline g$. Now 1)
follows from the fact that any gauge ball centered at the origin is
obviously (Euclidean) convex, and the left-translations generated by \eqref{hgl}, being affine maps,
preserve convex sets. To prove 2) it suffices to show that the
Euclidean unit normal to $\p B(g,<z_0,\omega>)$ at $\overline g$ is
parallel to $\omega\in \mathbb{R}^{2n+1}$. Now with
$$F(z,t)=d((z_0,t_0), (z,t))^4 - <z_0,\omega>^4,$$
a computation using \eqref{dvp} shows that such a Euclidean normal
is given by
\[
\nabla F(\overline g)  = - 4<z_0,\omega>^3 \omega,
\]
and so we are done.

Finally, we want to show that $g(\lambda)$
defined in \eqref{dv7} possesses the segment property \eqref{dv8}
with respect to the gauge distance. This is equivalent to showing that for any $\lambda \ge 1$ one has
\begin{equation}\label{dv10}
d(g(\lambda),\overline g) - d(g(\lambda),g) = d(g,\overline g).
\end{equation}
Using \eqref{hgl2} we have for any $\lambda \ge 1$,
\[
d(g(\lambda),g) = (\lambda - 1)<z_0,\omega> = (\lambda - 1) d(\overline g,g).
\]
By a similar computation we find
\[
d(g(\lambda),\overline g) = \lambda <z_0,\omega> = \lambda  d(\overline g,g).
\]
The desired conclusion \eqref{dv10} thus follows. 

\end{proof}

\begin{figure}[h]
\begin{center}
\includegraphics[width=50mm]{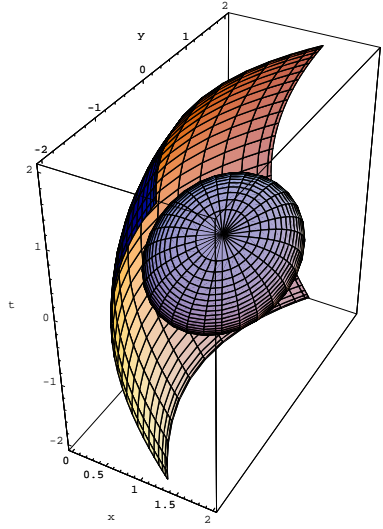}
\caption{The tangent ball at a point of the vertical plane $x=0$ in $\mathbb{H}^1$.}
\end{center}
\end{figure}

\begin{rmrk}\label{R:hp}
We stress that given $g_0\in H_\omega$, the point $\overline g_0$ in
\eqref{dvp} which realizes the gauge distance of $g_0$ to the
boundary $\Pi_\omega$ belongs to the \emph{horizontal plane}
through $g_0$. We recall that the horizontal plane through a point
$g_0 = (z_0,t_0)\in \Hn$ is given by
\[
\left\{(z,t) \in \Hn\mid t = t_0 - \frac{1}{2} <z_0^{\perp},z>\right\},
\]
see for instance \cite{DGN}. We also notice that from the proof of Lemma \ref{L:vp} it follows that
\[
d(g,\Pi_\omega) = d_e(g,\Pi_\omega),
\]
where we have indicated with $d_e(g,\Pi_\omega)$ the Euclidean distance in $\R^{2n+1}$ from $g$ to the vertical hyperplane $\Pi_\omega$.
\end{rmrk}

\begin{thrm}\label{L:vptb}
Let $\Om\subset \Hn$ be a (Euclidean) $C^{1,1}$ domain. Suppose
that at a given point $\overline g\in \p \Om$ the tangent
hyperplane to $\p\Om$  is the vertical hyperplane $\Pi_\omega$, and that $<\nu(\overline g), \omega>=1$, where $\nu(\overline g)$ is the unit inward normal
to $\p \Om$ at $\overline g$. This means, in particular, that $\overline g =
(\overline z,\overline t)$, with $<\overline z,\omega>=0$. Consider the straight
half-line segment whose points are given by
\begin{equation*}
g(\lambda) = \left(\overline z + \lambda \omega,\overline t -
\frac{\lambda}{2} <\overline z^\perp,\omega>\right),\ \ \ \ \lambda > 0.
\end{equation*}
Then for every $\lambda>0$ the gauge ball $B(g(\lambda),\lambda)$ is tangent to $\Pi_\omega$
at $\overline g$, and there exists $\lambda_0>0$ depending only on the
$C^{1,1}$ character of $\Om$ such that for every
$0<\lambda<\lambda_0$ one has
\begin{equation}\label{cont}
B(g(\lambda),\lambda) \subset \ \Om.
\end{equation}
\end{thrm}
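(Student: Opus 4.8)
The plan is to reduce the statement, via left-translation and the $U(n)$-invariance of the gauge distance (Lemma \ref{L:inv}), to a normalized situation in which $\overline g = e = (0,0)$, $\omega = e_1$ (the first coordinate vector in $\R^{2n}$), and the tangent hyperplane $\Pi_\omega$ is $\{x_1 = 0\}$ with inward normal pointing in the positive $x_1$-direction. Indeed, left-translation by $\overline g^{-1}$ is a gauge isometry by \eqref{isometry} and maps $\overline g$ to $e$; it maps the vertical hyperplane $\Pi_\omega$ to another vertical hyperplane (since left-translations are affine and preserve the "vertical" direction), and it maps the half-line $g(\lambda)$ to the corresponding normalized half-line because of the explicit form in Lemma \ref{L:vp}, formula \eqref{dv7}, applied with $t_0$ chosen so the base point is $e$. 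Then a rotation $S\in U(n)$ (which by Lemma \ref{L:inv} preserves the gauge distance and clearly preserves the class of $C^{1,1}$ domains and their $C^{1,1}$ characters, being linear) sends $\omega$ to $e_1$. The tangency of $B(g(\lambda),\lambda)$ to $\Pi_\omega$ at $\overline g$ for every $\lambda>0$ is then exactly the content of \eqref{dv5} in Lemma \ref{L:vp} (with $z_0 = \lambda\omega$, so $<z_0,\omega> = \lambda$), so that part requires no new argument.

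The substance is the containment \eqref{cont} for small $\lambda$. In the normalized frame, write a point of $\Hn$ as $(x_1, \zeta, t)$ with $\zeta \in \R^{2n-1}$ collecting the remaining horizontal coordinates, so that $\Pi_\omega = \{x_1 = 0\}$ and, near $e$, the boundary $\p\Om$ is a $C^{1,1}$ graph $x_1 = \phi(\zeta, t)$ with $\phi(0,0) = 0$, $\nabla_{(\zeta,t)}\phi(0,0) = 0$, and $\Om$ locally equal to $\{x_1 > \phi(\zeta,t)\}$; the $C^{1,1}$ hypothesis gives a uniform bound $|\phi(\zeta,t)| \le K(|\zeta|^2 + |t|^2)$ near the origin, i.e. $\p\Om$ lies between two paraboloids. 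The center is $g(\lambda) = (\lambda, 0, 0)$. I would estimate, for $(x_1,\zeta,t) \in B(g(\lambda),\lambda)$, how large $|\zeta|$ and $|t|$ can be, and how small $x_1$ can be: from \eqref{hgl2} the ball $B((\lambda,0,0),\lambda)$ is $\{ |z - \lambda e_1|^4 + 16(t + \tfrac12\langle z, (\lambda e_1)^\perp\rangle)^2 < \lambda^4 \}$. Inside this ball one reads off $|z - \lambda e_1| < \lambda$, hence $x_1 > 0$ always and in fact $x_1 > \lambda - |z-\lambda e_1|$, together with $|\zeta|^2 \le |z - \lambda e_1|^2 \le 2\lambda x_1 - x_1^2 \le 2\lambda x_1$, and from the second term a bound $|t| \lesssim \lambda^2 + \lambda |\zeta| \lesssim \lambda^2 + \lambda\sqrt{\lambda x_1}$; I would also need a matching lower bound $x_1 \gtrsim |\zeta|^2/\lambda$ and $x_1 \gtrsim |t|^2/\lambda^3$ type control, which is precisely where the convexity/paraboloid shape of the gauge ball enters. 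The goal is to show $x_1 > \phi(\zeta, t)$, i.e. $x_1 > K(|\zeta|^2 + t^2)$: substituting the ball estimates, $|\zeta|^2 + t^2 \lesssim \lambda x_1 + (\lambda^2 + \lambda\sqrt{\lambda x_1})^2 \lesssim \lambda x_1 + \lambda^4 + \lambda^3 x_1$, so $K(|\zeta|^2+t^2) \le C K(\lambda + \lambda^3) x_1 + CK\lambda^4$, and one wants this $< x_1$; the term $CK\lambda^4$ must be absorbed using a genuine \emph{lower} bound $x_1 \gtrsim$ (something)$\cdot\lambda^{-3}(|\zeta|^2 + t^2)$ available on the part of the sphere where $|\zeta|^2 + t^2$ is comparable to $\lambda^4$, forcing $x_1 \gtrsim \lambda$. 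Making all of this uniform and choosing $\lambda_0$ in terms of $K$ (equivalently of the $C^{1,1}$ character of $\Om$) is the computational core.

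The main obstacle I expect is exactly this interplay between the two scales in the gauge ball: the horizontal directions $\zeta$ scale like $\sqrt{\lambda x_1}$ while the vertical direction $t$ scales like $\lambda^2$ (the anisotropy $\delta_\lambda(z,t) = (\lambda z,\lambda^2 t)$), so the naive estimate $|t| \lesssim \lambda^2$ is too weak to beat $K t^2 \lesssim K\lambda^4$ against $x_1$ when $x_1$ is itself only of order $\lambda$ or smaller — one genuinely needs the sharp shape of the sphere $\p B(g(\lambda),\lambda)$ near its tangency point $\overline g$, i.e. the fact (implicit in the proof of Lemma \ref{L:vp}, where $\nabla F(\overline g) = -4<z_0,\omega>^3\omega$) that the sphere is tangent to $\Pi_\omega$ to \emph{second} order with curvature of size $1/\lambda$ in the $\zeta$-directions and of size $1/\lambda^3$ in the $t$-direction. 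Quantifying "$B(g(\lambda),\lambda)$ lies inside the region $\{x_1 > c_1|\zeta|^2/\lambda + c_2 t^2/\lambda^3\}$" for explicit $c_1, c_2 > 0$, and then comparing with the outer paraboloid bound on $\p\Om$, is the key lemma; once it is in place, choosing $\lambda_0 = \lambda_0(K, c_1, c_2)$ so that $K/c_i < 1/\lambda_0^{a_i}$ on the relevant range finishes the proof. A subsidiary but routine point is to verify that the $C^{1,1}$ graph representation and the constant $K$ can be taken uniform over $\p\Om$ by compactness, so that $\lambda_0$ depends only on the $C^{1,1}$ character of $\Om$ as claimed.
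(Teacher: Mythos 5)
Your reduction to the normalized frame (left-translation plus a unitary rotation sending $\omega$ to $e_1$, using \eqref{isometry} and Lemma \ref{L:inv}) is exactly what the paper does, and you have correctly identified the crux: the gauge sphere $\p B(g(\lambda),\lambda)$ touches $\Pi_\omega$ at $\overline g$ to second order with anisotropic curvatures of order $1/\lambda$ in the horizontal directions and $1/\lambda^3$ in the vertical direction, and one must make this quantitative enough to beat the $C^{1,1}$ paraboloid bound on $\p\Om$. Your ``key lemma'' -- that on $B(g(\lambda),\lambda)$ one has $x_1 \gtrsim |\zeta|^2/\lambda$ and $x_1 \gtrsim t^2/\lambda^3$ -- is precisely the estimate around which the paper's proof revolves. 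So this is not a different route; it is the same route.

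The problem is that your proposal stops exactly where the work begins. You state the key lemma, give the heuristic, and write ``once it is in place, choosing $\lambda_0$\,\dots finishes the proof,'' but you never establish it, and it is not routine. Two concrete things are missing. First, the actual ball constraint is $\big((x_1-\lambda)^2 + |\zeta|^2\big)^2 + 16\big(t - \tfrac{\lambda}{2}y_1\big)^2 < \lambda^4$; the second-order tangency estimate lives natively in the variable $\tau = t - \tfrac{\lambda}{2}y_1$, not in $t$. The paper handles this by an explicit change of variables, and one must then recover the bound in $t$ by absorbing the coupling term using $|y_1| \le |\zeta| \lesssim \sqrt{\lambda x_1}$; your sketch only records the naive bound $|t| \lesssim \lambda^2 + \lambda|\zeta|$, which, as you yourself observe, is too weak near the tangency point. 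Second, the sharp lower bound on $x_1$ really does require the quartic structure of the gauge: solving for $x_1$ and rationalizing one gets
\[
x_1 \;=\; \lambda - \sqrt{\sqrt{\lambda^4 - 16\tau^2} - |\zeta|^2}
\;\ge\; \frac{\lambda^2 - \sqrt{\lambda^4 - 16\tau^2} + |\zeta|^2}{2\lambda},
\]
and then
\[
\lambda^2 - \sqrt{\lambda^4 - 16\tau^2} \;=\; \frac{16\tau^2}{\lambda^2 + \sqrt{\lambda^4 - 16\tau^2}} \;\ge\; \frac{8\tau^2}{\lambda^2},
\]
which is your second-order-tangency bound. Without these two lines the proposal is an outline, not a proof; with them, the paper finishes by a single Cauchy-Schwarz applied to $a = (\lambda,1)$ and $b = \big((1-s^4)^{1/2},s^2\big)$, giving $\lambda_0 = \min\{2, 1/(6A)\}$. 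Fill in this computation and the argument is complete.
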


\begin{proof}
Using \eqref{dvp} it is not difficult to verify that for any given
$\lambda>0$ we have
\[
d(g(\lambda), \Pi_\omega) = d(g(\lambda),\overline g) = \lambda.
\]
As we have seen in the proof of Lemma \ref{L:vp}, this shows that the gauge ball $B(g(\lambda),\lambda)$ is fully contained in the vertical half-space $H_\omega$, and tangent to $\Pi_\omega$ at $\overline g$. To prove \eqref{cont} we argue as follows. We let $S\in U(n)$ be a unitary matrix such that $S\omega = e_1 = (1,0,...,0)$. Such transformation sends the vertical hyperplane $\Pi_\omega = \Pi(\overline g)$ into the 
vertical hyperplane 
\[
\Pi_1=\{(x,y,t)\in \Hn\mid x_1=0\}.
\]
Furthermore, thanks to Lemma \ref{L:inv} such transformation preserves the gauge distance, see \eqref{inv}, and therefore it is not restrictive to assume from the start that 
the hyperplane $\Pi(\overline g)$ coincides with $\Pi_1.$ And that the point $\overline g$ at which the domain $\Om$ and the hyperplane $\Pi_1$ touch is a non-characteristic point for $\p \Om$.

Having done this, the boundary of the gauge ball $B(g(\lambda),\lambda)$ is now described
by the equation
\begin{align}\label{bvgb}
&\bigg((x_1-\lambda)^2 + |x'-\overline x'|^2 + |y - \overline
y|^2\bigg)^2 \\
& + \ 16 \left(t - \overline t - \frac{\lambda}{2}(y_1 - \overline
y_1) + \frac{1}{2}\left(<x,\overline y> - <\overline
x',y'>\right)\right)^2\ =\ \lambda^4\ ,\notag
\end{align}
where we have let $(x,y)= (x_1, x', y_1, y')$ and $(\overline x,\overline y)= (\overline x_1, \overline x', \overline y_1, \overline y') \in \R\times \R^{n-1}\times \R\times \R^{n-1}$.

Since we are assuming that $\p \Om$ is tangent to the vertical plane
$x_1 = 0$ at $\overline g$, we can locally describe the boundary of
$\Om$ as a graph in the variables $(x',y,t)$. This means that we can
find $r_0>0$ sufficiently small, and a $C^{1,1}$ function

$$\phi:\left\{(x',y,t)\in \R^{n-1}\times \Rn\times \R \mid |x'-\overline x'|^2
+ |y-\overline y|^2 + (t - \overline t)^2 <r_0^2\right\} \to \R ,$$
such that $\phi(\overline x',\overline y, \overline t) = 0$, $\nabla
\phi(\overline x',\overline y,\overline t) = 0$, and for which  the set
$$\p \Om \cap \left \{(x,y,t)\in \Hn\mid
|x'-\overline x'|^2 + |y-\overline y|^2 + (t - \overline t)^2
<r_0^2, \ |x_1|<  r_0^2\right \}$$
is given by
$$\left \{(x,y,t)\in \Hn \mid x_1 = \phi(x',y,t),\ |x'-\overline x'|^2 + |y-\overline y|^2 + (t - \overline t)^2
<r_0^2, \ |x_1|<  r_0^2\right\},$$
whereas,  since  $<\nu(\overline g), e_1>=1>0$, the set
$$ \Om \cap \left \{(x,y,t)\in \Hn\mid
|x'-\overline x'|^2 + |y-\overline y|^2 + (t - \overline t)^2
<r_0^2, \ |x_1|<  r_0^2\right \}$$
is given by
$$\left \{(x,y,t)\in \Hn \mid x_1 > \phi(x',y,t),\ |x'-\overline x'|^2 + |y-\overline y|^2 + (t - \overline t)^2
<r_0^2, \ |x_1|<  r_0^2\right \}.$$

By the $C^{1,1}$ assumption on $\Om$ we can find $A>0$ such  that
\begin{equation}\label{parver}
|\phi(x', y,t)|\ \leq\ A\left(|x'-\overline x'|^2 + |y-\overline y|^2 + (t -
\overline t)^2\right),
\end{equation}
whenever
$$|x'-\overline x'|^2 + |y-\overline y|^2 +
(t - \overline t)^2 <r_0^2. $$

To prove \eqref{cont} it will thus suffice to show that the gauge ball
$B(g(\lambda),\lambda)$ is entirely contained in the paraboloid with
respect to the variables $(x',y,t)$ in the right-hand side of
\eqref{parver}.

To simplify the situation we left-translate $\Om$ and $\Pi_1$ by the
point $g_0 = (0,- \overline x',-\overline y,-\overline t)\in \Pi_1$.
Such a left-translation leaves $\Pi_1$ unchanged, but has the effect
that now the boundary of the gauge ball \eqref{bvgb}  becomes
\begin{equation}\label{bvgb2}
\bigg((x_1-\lambda)^2 + |x'|^2 + |y|^2\bigg)^2 + 16 \left(t  -
\frac{\lambda}{2} y_1\right)^2 = \lambda^4,
\end{equation}
whereas the paraboloid in the right-hand side of \eqref{parver} is
now given by
\begin{equation}\label{parver2}
x_1 = A (|x'|^2 + |y|^2 + t^2),\ \ \ \ |x'|^2 + |y|^2 + t^2 <
r^2_0.
\end{equation}

The advantage is that we can now easily solve with respect to the
variable $x_1$ the equation of order four \eqref{bvgb2} obtaining
\begin{equation}\label{bvgb3}
x_1 = \lambda\ - \sqrt{\sqrt{\lambda^4 - 16 \left(t -
\frac{\lambda}{2} y\right)^2} - (|x'|^2 + |y|^2)},
\end{equation}

Notice that the variable $x_1$ in \eqref{bvgb2} ranges from $0$ to
$2\lambda$, and that the allowable region of points $(x',y,t)$ is
obtained by projecting onto the $(x',y,t)$-hyperplane the
intersection of \eqref{bvgb2} with the plane $x_1 = \lambda$, which
gives
\begin{equation}\label{ar}
\left(|x'|^2 + |y|^2\right)^2 + 16 \left(t  - \frac{\lambda}{2}
y_1\right)^2\ <\ \lambda^4.
\end{equation}

To further simplify the situation we consider the global
diffeomorphism of $\R^{2n+1}$ onto itself given by
\[
\xi = x,\ \ \eta = y,\ \ \ \ \tau = t - \frac{\lambda}{2}
y_1.
\]

Such diffeomorphism transforms \eqref{bvgb3} and \eqref{parver2}
respectively into
\begin{equation*}
\xi_1 = \lambda\ - \sqrt{\sqrt{\lambda^4 - 16 \tau^2} - (|\xi'|^2
+ |\eta|^2)},
\end{equation*}
and
\begin{equation*}
\xi_1 = A \left(|\xi'|^2 + |\eta|^2 + \left(\tau +
\frac{\lambda}{2} \eta_1\right)^2\right),\ \ \ \ |\xi'|^2 +
|\eta|^2 + \left(\tau + \frac{\lambda}{2} \eta_1\right)^2 < r^2_0,
\end{equation*}
and the region \eqref{ar} into
\begin{equation}\label{ar2}
\left(|\xi'|^2 + |\eta|^2\right)^2 + 16 \tau^2 <  \lambda^4.
\end{equation}

Notice that \eqref{ar2} imposes that $\lambda^4 - 16 \tau^2 >
\left(|\xi'|^2 + |\eta|^2\right)^2 \geq 0$. After these reductions
we are left with proving that if $\lambda$ is sufficiently small
then
\begin{equation}\label{trans}
\lambda - \sqrt{\sqrt{\lambda^4 - 16 \tau^2} - (|\xi'|^2 +
|\eta|^2)}
> A \left(|\xi'|^2 + |\eta|^2 + \left(\tau + \frac{\lambda}{2}
\eta_1\right)^2\right),
\end{equation}
provided that \eqref{ar2} holds. We find
\begin{eqnarray*}
\lambda - \sqrt{\sqrt{\lambda^4 - 16 \tau^2} - (|\xi'|^2 +
|\eta|^2)}&=&
\frac{\lambda^2 - \sqrt{\lambda^4 - 16 \tau^2} + (|\xi'|^2 + |\eta|^2)}{\lambda + \sqrt{\sqrt{\lambda^4 - 16 \tau^2} - (|\xi'|^2 + |\eta|^2)}}\\
&\geq& \frac{\lambda^2 - \sqrt{\lambda^4 - 16 \tau^2} + (|\xi'|^2 +
|\eta|^2)}{2\lambda }.
\end{eqnarray*}

Thus for \eqref{trans} to hold it is enough to find $\lambda_0>0$ so
that for $0<\lambda<\lambda_0$,
\begin{equation*}
\lambda^2 - \sqrt{\lambda^4 - 16 \tau^2} + (|\xi'|^2 + |\eta|^2)
>  2 A \lambda \left(|\xi'|^2 + |\eta|^2 + \left(\tau +
\frac{\lambda}{2} \eta_1\right)^2\right),
\end{equation*}
provided that \eqref{ar2} holds. Note that if $0<\lambda<2$, then
\begin{eqnarray*}
\lambda^2 - \sqrt{\lambda^4 - 16 \tau^2} + |\xi'|^2 + |\eta|^2
&=& \frac{ 16 \tau^2}{\lambda^2 + \sqrt{\lambda^4 - 16 \tau^2}} + |\xi'|^2 + |\eta|^2 \\
&\geq& \frac{ 8 \tau^2}{\lambda^2} + |\xi'|^2 + |\eta|^2 \geq 2
\tau^2 + |\xi'|^2 + |\eta|^2.
\end{eqnarray*}

On the other hand, we easily have \[ 2 A \lambda \left(|\xi'|^2 +
|\eta|^2 + \left(\tau + \frac{\lambda}{2} \eta_1\right)^2\right)
\leq 6 A \lambda (|\xi'|^2 + |\eta|^2 + \tau^2).\]

It thus suffices to show that
\[
2 \tau^2 + |\xi'|^2 + |\eta|^2 \ >\ 6 A \lambda (|\xi'|^2 + |\eta|^2
+ \tau^2) \ .
\]

It is now easy to verify that this latter inequality is valid
provided that $0<\lambda< 1/6A$. We conclude that \eqref{trans}
holds for $0<\lambda<\lambda_0 = \min\{2,1/6A\}$.

\end{proof}

\section{Characteristic quasi-segments}\label{S:qs2}

In this section we study the distance from a characteristic hyperplane away from the
characteristic set.

\begin{lemma}\label{L:dcp}
Consider the half-space $H_0 = \{(z,t)\in \Hn\mid t>0\}$ whose
boundary is the characteristic hyperplane $\Pi_0 = \{(z,0)\in
\Hn\mid z\in \R^{2n}\}$. For any point $(z_0,t_0)\in H_0$, with
$z_0\not= 0$, its gauge distance to $\Pi_0$ is realized by the point $\overline g_0 =(z_0+\lambda z_0^{\perp},0)\in H_0$ and is
given by the formula
\begin{equation}\label{dcp}
d((z_0,t_0),\Pi_0) = \left(\lambda^4 |z_0|^4 + 16\left(t_0 -
\frac{\lambda}{2}|z_0|^2\right)^2\right)^{1/4},
\end{equation}
where $\lambda = \lambda(z_0,t_0)$ is the real root of the cubic equation $\lambda^3 + 2 \lambda \ =\ \frac{4t_0}{|z_0|^2}$.
Equivalently, $\lambda= G\left(\frac{2t_0}{|z_0|^2}\right) >0$ with $G$ being given by the equation
\eqref{G} below. Moreover, one has
\begin{equation}\label{las3}
d((z_0,t_0), \Pi_0) = \frac{2t_0}{|z_0|} (1 + o(1)),\ \ \
\text{as}\ t_0 \to 0^+,
\end{equation}
where $o(1)$ indicates a function which goes to zero as
$t_0/|z_0|\to 0$. Keeping in mind that $d_e((z_0,t_0),H_0) = t_0$,
this gives in particular
\begin{equation*}
d((z_0,t_0), H_0) = \frac{2 d_e((z_0,t_0),H_0) }{|z_0|} (1 +
o(1)),\ \ \ \text{as}\ t_0 \to 0^+.
\end{equation*}
\end{lemma}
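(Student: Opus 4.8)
The plan is to minimize the fourth power of the gauge distance from the fixed point $(z_0,t_0)\in H_0$ to a generic point $(z,0)\in \Pi_0$, exactly as in the proof of Lemma \ref{L:vp}, and then to extract the asymptotics as $t_0\to 0^+$. First I would write, using \eqref{hgl2},
\[
f(z) = d((z_0,t_0),(z,0))^4 = |z-z_0|^4 + 16\left(t_0 - \tfrac12\langle z_0^\perp,z\rangle\right)^2,
\]
and compute $\nabla_z f = 0$. Since $\nabla_z\langle z_0^\perp,z\rangle = z_0^\perp$, the critical point equation reads
\[
4|z-z_0|^2(z-z_0) = 8\left(t_0 - \tfrac12\langle z_0^\perp,z\rangle\right) z_0^\perp,
\]
so $z - z_0$ is parallel to $z_0^\perp$; writing $z = z_0 + \lambda z_0^\perp$ (using $|z_0^\perp| = |z_0|$ and $\langle z_0^\perp, z_0^\perp\rangle = |z_0|^2$, while $\langle z_0^\perp, z_0\rangle = 0$) we get $\langle z_0^\perp, z\rangle = \lambda |z_0|^2$, hence $|z-z_0|^2 = \lambda^2|z_0|^2$ and the equation collapses to the scalar relation $4\lambda^3|z_0|^2 = 8(t_0 - \tfrac{\lambda}{2}|z_0|^2)$, i.e.
\[
\lambda^3 + 2\lambda = \frac{4t_0}{|z_0|^2}.
\]
This cubic in $\lambda$ has exactly one real root, and since its right-hand side is positive that root is positive; calling $G(s)$ the inverse function of $\lambda\mapsto \lambda^3+2\lambda$ (this is the "$G$" of \eqref{G}) gives $\lambda = G(2t_0/|z_0|^2)$. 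Substituting back yields formula \eqref{dcp}, and a second-derivative or convexity check confirms this is the minimum (note $f\to\infty$ as $|z|\to\infty$, so an interior critical point with $z\ne z_0$ is the global minimum, the value $z=z_0$ being a local maximum on the constrained set as in Lemma \ref{L:vp}).

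**Asymptotics.** Next I would analyze $\lambda\to 0$ as $t_0/|z_0|^2\to 0^+$. From $\lambda^3+2\lambda = 4t_0/|z_0|^2$ one reads off $\lambda = \tfrac{2t_0}{|z_0|^2}(1+o(1))$, so $\lambda|z_0|^2 = 2t_0(1+o(1))$ and hence $t_0 - \tfrac{\lambda}{2}|z_0|^2 = t_0 - t_0(1+o(1)) = o(t_0)$, more precisely $t_0 - \tfrac{\lambda}{2}|z_0|^2 = \tfrac{\lambda^3}{4}|z_0|^2 = O(t_0^3/|z_0|^4)$ from the cubic itself. Plugging into \eqref{dcp}:
\[
d((z_0,t_0),\Pi_0)^4 = \lambda^4|z_0|^4 + 16\left(\tfrac{\lambda^3}{4}|z_0|^2\right)^2 = \lambda^4|z_0|^4\left(1 + \lambda^2\right),
\]
so $d((z_0,t_0),\Pi_0) = \lambda|z_0|(1+\lambda^2)^{1/4} = \tfrac{2t_0}{|z_0|}(1+o(1))$, which is \eqref{las3}. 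Since $d_e((z_0,t_0),H_0) = t_0$ trivially, the last displayed equation of the lemma follows by substitution.

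**Main obstacle.** The computations above are routine; the one point requiring a little care is the passage from the vector critical-point equation to the scalar cubic, where one must justify that $z-z_0$ is genuinely forced to be a multiple of $z_0^\perp$ (the coefficient $4|z-z_0|^2$ could vanish only if $z=z_0$, which is excluded at the minimizer since $z_0\ne 0$ and $t_0>0$ force $f(z_0) = 16t_0^2 > 0$ while the minimum value is strictly smaller — one can exhibit a competitor $z = z_0+\lambda z_0^\perp$ with small $\lambda>0$ lowering $f$). The other mildly delicate point is confirming $\langle z_0^\perp, z_0\rangle = 0$, i.e. that $J$ is skew, which is immediate from \eqref{J}. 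Beyond that, identifying the function $G$ precisely and verifying $G>0$, monotone, and smooth (so that the $o(1)$ terms are legitimate) is elementary since $\lambda\mapsto\lambda^3+2\lambda$ is a smooth increasing bijection of $\mathbb R$ onto $\mathbb R$.
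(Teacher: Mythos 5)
Your approach is essentially identical to the paper's: minimize $f(z)=d((z_0,t_0),(z,0))^4$, show the only critical point is of the form $z_0+\lambda z_0^\perp$, reduce the vector equation to the scalar cubic $\lambda^3+2\lambda=4t_0/|z_0|^2$, and then read off the asymptotics. Your route to \eqref{las3} is in fact slightly cleaner than the paper's: you use the cubic itself to rewrite $t_0-\tfrac{\lambda}{2}|z_0|^2=\tfrac{\lambda^3}{4}|z_0|^2$, which collapses \eqref{dcp} into the exact closed form $d^4=\lambda^4|z_0|^4(1+\lambda^2)$ and then immediately gives $d=\lambda|z_0|(1+\lambda^2)^{1/4}=\tfrac{2t_0}{|z_0|}(1+o(1))$; the paper also obtains $d=|z_0|\lambda(1+\lambda^2)^{1/4}$, but goes through the inverse function theorem for two auxiliary functions $\Psi$ and $\psi$ to extract the same asymptotics.

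That said, there are two arithmetic slips you should fix, even though they do not alter the conclusion. First, the gradient equation should read $4|z-z_0|^2(z-z_0)=16\bigl(t_0-\tfrac12\langle z_0^\perp,z\rangle\bigr)z_0^\perp$ (your $8$ should be $16$); with the $8$ you wrote, the scalar relation $4\lambda^3|z_0|^2=8(t_0-\tfrac{\lambda}{2}|z_0|^2)$ would give $\lambda^3+\lambda=2t_0/|z_0|^2$, \emph{not} $\lambda^3+2\lambda=4t_0/|z_0|^2$, so your ``i.e.'' is internally inconsistent. Second, the paper's $G$ in \eqref{G} is the inverse of $\Psi(\lambda)=\tfrac{\lambda^3}{2}+\lambda$, not of $\lambda\mapsto\lambda^3+2\lambda$ as you state; only with the former normalization is $\lambda=G(2t_0/|z_0|^2)$ correct. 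Also, your parenthetical describing $z=z_0$ as ``a local maximum on the constrained set'' is misplaced here: there is no constraint (you minimize over all $z\in\R^{2n}$), and $z_0$ is not even a critical point since $\nabla f(z_0)=-16 t_0 z_0^\perp\neq 0$. The correct justification, which the paper uses, is precisely this: $z_0$ is excluded as a critical point because $\nabla f(z_0)\neq 0$, and since the cubic with $t_0>0$ has a single real root, the critical point $z_0+\lambda z_0^\perp$ is unique; combined with coercivity of $f$ it must be the global minimum.
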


\begin{proof}
Let $(z_0,t_0)\in H_0$ be such that $z_0 = (x_0,y_0) \not= 0$, and
consider the function \[ f(z) = d((z_0,t_0),(z,0))^4. \] 

From \eqref{hgl2} we find (recall that $z_0^{\perp}=(y_0,-x_0)$)
\begin{equation}\label{f}
f(z) =  |z-z_0|^4 + 16 \left(t_0 - \frac{1}{2}
<z,z_0^\perp>\right )^2.
\end{equation}

 The possible critical
points of $f$ are solutions to the equation
\[
\nabla f(z) = 4 |z-z_0|^2 (z-z_0) - 16 \left(t_0 - \frac{1}{2}
<z,z_0^\perp>\right) z_0^\perp = 0.
\]

Notice that $z=z_0$ cannot possibly be a critical point of $f$ since
$\nabla f(z_0) = - 16 t_0 z_0^\perp \not= 0$. This forces \[ t_0 -
\frac{1}{2} <z,z_0^\perp> \not= 0, \] at a critical point $z$
for otherwise we would have to have $z= z_0$.  Also notice that
$z\in \R^{2n}$ is a critical point of $f$ iff we have for $z$
\begin{equation}\label{cc}
 |z-z_0|^2 (z-z_0)  =  4 \left(t_0 - \frac{1}{2}
<z,z_0^\perp>\right) z_0^\perp.
\end{equation}

This means that $z$ must satisfy the equation
\begin{equation}\label{cp}
z   = z_0 + \lambda\ z_0^\perp,
\end{equation}
for the real number $\lambda$ given by
\begin{equation}\label{lambda}
\lambda = \frac{4 (t_0 -\frac{1}{2}
<z,z_0^\perp>)}{|z-z_0|^2}.
\end{equation}

From what we have observed above, it must be that $|\lambda|>0$. At
a critical point we have from \eqref{cp}
\[
<z,z_0^\perp> = <z_0 + \lambda z_0^\perp, z_0^\perp> =  \lambda
|z_0|^2,\ \ \ \ \ \  |z-z_0|^2 = \lambda^2 |z_0|^2.
\]

Substituting these equations and \eqref{cp} in \eqref{cc}, we find
\begin{equation*}
\lambda^3 |z_0|^2 z_0^\perp = 4 \left(t_0 - \frac{\lambda}{2}
|z_0|^2\right) z_0^\perp.
\end{equation*}

Taking the inner product of both sides with $z_0^\perp$ we obtain
\begin{equation}\label{cc3}
|z_0|^2 \lambda^3  = 4 \left(t_0 - \frac{\lambda}{2}
|z_0|^2\right).
\end{equation}

From \eqref{cc3} we conclude that $\lambda$ must satisfy the cubic
equation
\begin{equation}\label{cubic}
\lambda^3 + 2 \lambda  = \frac{4t_0}{|z_0|^2}.
\end{equation}

If we consider the strictly increasing function on $[0,\infty)$
\begin{equation}\label{Psi}
\Psi(\lambda) =
\frac{\lambda^3}{2} + \lambda,
\end{equation}
then \eqref{cubic} can be written
\begin{equation}\label{psi1}
\Psi(\lambda) = b,\ \ \ \text{with}\ b = \frac{2t_0}{|z_0|^2}.
\end{equation}

Let now $G= \Psi^{-1}:[0,\infty)\to \R$ be the inverse function of
$\Psi$, using the Cardano-Tartaglia formula, see \cite{Ca}, we find
for $b\geq 0$
\begin{equation}\label{G}
G(b) = \left( \left(\frac{8}{27} + b^2 \right)^{1/2} +
b\right)^{1/3} - \left( \left(\frac{8}{27} + b^2\right)^{1/2} -
b\right)^{1/3}.
\end{equation}
It is clear from \eqref{G} that $G(0) = 0$. We conclude that one
real root of \eqref{cubic} is given by
\begin{align}\label{ct}
\lambda & = \lambda(z_0,t_0) = G\left(\frac{2t_0}{|z_0|^2}\right)
\\
& = \left( \left(\frac{8}{27} + \frac{4 t_0^2}{|z_0|^4}
\right)^{1/2} + \frac{2t_0}{|z_0|^2}\right)^{1/3} - \left(
\left(\frac{8}{27} + \frac{4 t_0^2}{|z_0|^4}\right)^{1/2} -
\frac{2t_0}{|z_0|^2}\right)^{1/3} > 0. \notag
\end{align}
Notice that as $t_0\to 0^+$ one has $\lambda(z_0,t_0) \to 0$ for
every $z_0\not= 0$ fixed. We also notice that since $\Psi'(0) = 1$
and $\Psi''(0) = 0$, the inverse function theorem gives
\begin{equation}\label{infn} G'(0) = \frac{1}{\Psi'(0)} = 1,\
\ \ G''(0) = - \frac{\Psi''(0)}{\Psi'(0)^3} = 0.
\end{equation}
We observe that \eqref{psi1} gives
\begin{equation}\label{tpsi} t_0
= \frac{|z_0|^2}{2}\ \Psi(\lambda).
\end{equation}
Since $t_0>0$, the other two roots of \eqref{cubic} are necessarily
complex conjugates, and therefore they are to be discarded since
$\lambda\in \R$, see \eqref{lambda}. Equation \eqref{cp} thus
produces one single critical point $\overline z_0$ with $\lambda$
given by \eqref{ct}. From \eqref{f} we thus conclude that for
$z_0\not= 0$
\begin{equation}\label{fd}
d((z_0,t_0),\Pi_0) = f(\overline z_{0})^{1/4} = \left(\lambda^4
|z_0|^4 + 16\left(t_0 -
\frac{\lambda}{2}|z_0|^2\right)^2\right)^{1/4},
\end{equation}
which gives \eqref{dcp}. If we keep \eqref{cc3} in mind, we can re-write this formula as
follows
\begin{equation*}
d((z_0,t_0), \Pi_0) = |z_0| \lambda (1 + \lambda^2)^{1/4}.
\end{equation*}
Therefore,
\begin{equation}\label{lp}
\lambda = \psi^{-1}\left(\frac{d((z_0,t_0),\Pi_0)}{|z_0|}\right),\ \ \text{where}\ \ \psi(s) \overset{def}{=} s (1 + s^2)^{1/4}.
\end{equation}
We note that $\psi:[0,\infty)\to \R$ is strictly increasing and
that, since $\psi'(0) = 1, \psi''(0) = 0$, we have
\begin{equation*}
(\psi^{-1})'(0) = 1,\ \ \ \ (\psi^{-1})''(0) = 0.
\end{equation*}
Therefore
\begin{equation}\label{psinv2}
\psi^{-1}(s) = s (1 + O(s^2)),\ \ \text{as} \ s\to 0^+.
\end{equation}
This shows that
\begin{equation}\label{las}
\lambda(z_0,t_0) = \frac{d((z_0,t_0),\Pi_0)}{|z_0|} (1 + o(1)),\ \ \text{as}\ t_0 \to 0^+,
\end{equation}
with $o(1)\to 0$ as $t_0/|z_0|\to 0$.  On the other hand,
\eqref{ct} and \eqref{infn} imply that
\begin{equation}\label{las2}
\lambda(z_0,t_0) = \frac{2t_0}{|z_0|^2} (1 + o(1)).
\end{equation}
From \eqref{las}, \eqref{las2} we thus conclude
\[
d((z_0,t_0),\Pi_0) = \frac{2t_0}{|z_0|} (1 + o(1)),\ \ \
\text{as}\ t_0 \to 0^+,
\]
which proves \eqref{las3} and completes the proof of the lemma.

\end{proof}

\begin{rmrk}\label{R}
Before proceeding further we observe explicitly that, contrarily to what
happens in the case of a vertical plane, in the present situation
the point $\overline g_0$ which realizes the gauge distance of $g_0$
to $\Pi_0$ does not belong to the horizontal plane through $g_0$.
To see this let us
recall that, given a point $g_0=(z_0,t_0)$, then the equation of such plane is
given by
\begin{equation}\label{hp*}
t = t_0 - \frac{1}{2} <z_0^\perp,z>.
\end{equation}
Using $\overline g_0=(z_0+\lambda z_0^{\perp},0)$ (by Lemma \ref{L:dcp}) in \eqref{hp*} we find that  the condition $\overline g_0$ belongs to the horizontal plane through $g_0$ is equivalent to
saying that
\[
t_0 - \frac{\lambda}{2} |z_0|^2 = 0.
\]
But this is impossible by \eqref{cc3} and by the fact that
$\lambda= G\left(\frac{2t_0}{|z_0|^2}\right) >0$.
\end{rmrk}

The next result shows that the ball centered at $g_0\in H_0$ and
with radius $d(g_0,\overline g_0)$ is tangent to $\Pi_0$ at
$\overline g_0$.

\begin{lemma}\label{L:tangball}
Let $g_0 = (z_0,t_0)\in \Hn$ with $z_0\not= 0$ and $t_0>0$, then the
gauge ball $B(g_0,d(g_0,\overline g_0))$ is tangent to $\Pi_0$ at
$\overline g_0$, and
\begin{equation}\label{abc}
B(g_0,d(g_0,\overline g_0)) \subset H_0,\ \ \text{and}\ \ \overline
g_0 \in \Pi_0,
\end{equation}
where $\overline g_0$ is as in Lemma \ref{L:dcp}.
\end{lemma}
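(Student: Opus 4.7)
The plan is to verify the three conclusions separately: $\overline g_0 \in \Pi_0$, then $B(g_0,d(g_0,\overline g_0)) \subset H_0$, and finally the tangency of the sphere to $\Pi_0$ at $\overline g_0$. The first is immediate from the formula $\overline g_0 = (z_0+\lambda z_0^\perp, 0)$ provided by Lemma \ref{L:dcp}, since the $t$-coordinate vanishes.

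For the inclusion $B(g_0,d(g_0,\overline g_0)) \subset H_0$, I would argue by a separation of the ball from $\Pi_0$ using the distance identity $d(g_0,\overline g_0) = d(g_0,\Pi_0)$ from Lemma \ref{L:dcp}. If some $g$ in the open gauge ball belonged to $\Pi_0$, then $d(g,g_0) \geq d(g_0,\Pi_0) = d(g_0,\overline g_0)$, contradicting the strict inequality $d(g,g_0) < d(g_0,\overline g_0)$. Hence the open ball is disjoint from $\Pi_0$, and since gauge balls are connected (in fact Euclidean convex, as noted in the proof of Lemma \ref{L:vp}) and the center $g_0$ lies in $H_0$ because $t_0>0$, the whole ball must lie in $H_0$.

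For the tangency, I would compare Euclidean normals at $\overline g_0$ by differentiating
\[
F(z,t) = d((z,t),g_0)^4 - d(g_0,\overline g_0)^4 = |z-z_0|^4 + 16\left(t-t_0+\tfrac{1}{2}\langle z_0^\perp,z\rangle\right)^2 - d(g_0,\overline g_0)^4,
\]
which vanishes on $\partial B(g_0,d(g_0,\overline g_0))$. The crucial observation is that
\[
\nabla_z F(\overline g_0) = 4|\overline z_0 - z_0|^2(\overline z_0 - z_0) - 16\left(t_0 - \tfrac{1}{2}\langle \overline z_0,z_0^\perp\rangle\right) z_0^\perp
\]
is exactly the expression equated to zero in the critical-point equation \eqref{cc} characterizing $\overline z_0$ in Lemma \ref{L:dcp}. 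Therefore $\nabla_z F(\overline g_0) = 0$ for free. For the $t$-component, evaluating $\partial_t F = 32\bigl(t - t_0 + \tfrac{1}{2}\langle z_0^\perp, z\rangle\bigr)$ at $\overline g_0$, using $\langle z_0^\perp,z_0+\lambda z_0^\perp\rangle = \lambda|z_0|^2$ and the cubic identity \eqref{cc3}, yields $\partial_t F(\overline g_0) = 32\bigl(-t_0 + \tfrac{\lambda}{2}|z_0|^2\bigr) = -8\lambda^3|z_0|^2 \neq 0$, since $\lambda>0$ and $z_0\neq 0$. Consequently $\nabla F(\overline g_0)$ is a nonzero multiple of the $t$-axis direction, which is precisely the Euclidean normal to $\Pi_0$, proving that the sphere $\partial B(g_0,d(g_0,\overline g_0))$ and the hyperplane $\Pi_0$ share the same tangent hyperplane at $\overline g_0$.

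There is essentially no obstacle here; the only thing to notice is that the critical-point equation used in Lemma \ref{L:dcp} to pinpoint $\overline g_0$ is precisely the vanishing of the Euclidean horizontal gradient of the distance-to-$g_0$ function restricted to $\Pi_0$, so tangency drops out of that identification at no extra cost.
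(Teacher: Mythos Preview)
Your proof is correct and follows essentially the same route as the paper for the tangency: both compute the Euclidean gradient of $F(z,t)=d((z,t),g_0)^4-R_0^4$ at $\overline g_0$ and use the critical-point relations from Lemma~\ref{L:dcp} to see that the $z$-component vanishes while the $t$-component does not. Your remark that the vanishing of $\nabla_z F(\overline g_0)$ is \emph{literally} the equation \eqref{cc} is a clean way to phrase what the paper verifies by direct substitution.

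The one genuine difference is the inclusion $B(g_0,d(g_0,\overline g_0))\subset H_0$. The paper deduces it \emph{after} tangency, using that gauge balls are Euclidean convex and hence lie on one side of a tangent hyperplane. You instead argue directly from the identity $d(g_0,\overline g_0)=d(g_0,\Pi_0)$: any point of $\Pi_0$ lies at gauge distance at least $d(g_0,\overline g_0)$ from $g_0$, so the open ball misses $\Pi_0$, and connectedness forces it into $H_0$. Your argument is a bit more elementary (it does not need convexity or even the tangency computation), while the paper's has the virtue of making the tangency do double duty. Either way the lemma follows.
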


\begin{proof}
To prove this we set $R_0 = d(g_0,\overline g_0)$ and consider the
function
\begin{equation*}
F(z,t) =\  d((z_0, t_0), (z, t))^4  - R_0^4\  =\ |z-z_0|^4 + 16 \left(t-t_0 +\frac{1}{2} <z_0^\perp,z>\right)^2\
-\ R_0^4\ 
\end{equation*}
by formula \eqref{hgl2}. We have
\[
\nabla F(z,t)\ =\ 4 \left(|z-z_0|^2 (z-z_0) + 4 \left(t -t_0 +
\frac{<z_0^\perp,z>}{2} \right) z_0^\perp,\ 8 \left(t -t_0 +
\frac{<z_0^\perp,z>}{2} \right)\right) \  .
\]

Now with $\overline g_0\ = (\overline z_0, 0)\ =\ (z_0 + \lambda z_0^\perp,0)$,
we find that
\[
\nabla F(\overline g_0)\ =\ 4\left(\lambda^3 |z_0|^2 z_0^\perp - 4
\left(t_0 - \frac{\lambda}{2} |z_0|^2\right) z_0^\perp,- 8 \left(
t_0 - \frac{\lambda}{2} |z_0|^2\right)\right)\ .
\]

Using \eqref{cc3} we conclude that
\[
\nabla F(\overline g_0)\ =\ - 32 \left(0,\left( t_0 -
\frac{\lambda}{2} |z_0|^2\right)\right)\ =\ - 32 \frac{|z_0|^2}{2}
\left(\frac{2 t_0}{|z_0|^2} -
G\left(\frac{2t_0}{|z_0|^2}\right)\right) (0,1)\ .
\]

Thus the gauge ball $B(g_0,R_0)$ is tangent to $\Pi_0$ at $\overline g_0$.
Since $B(g_0,R_0)$ is convex, we see that it must obey the inclusion in \eqref{abc}.

\end{proof}

\begin{figure}[h]
\begin{center}
\includegraphics[width=82mm]{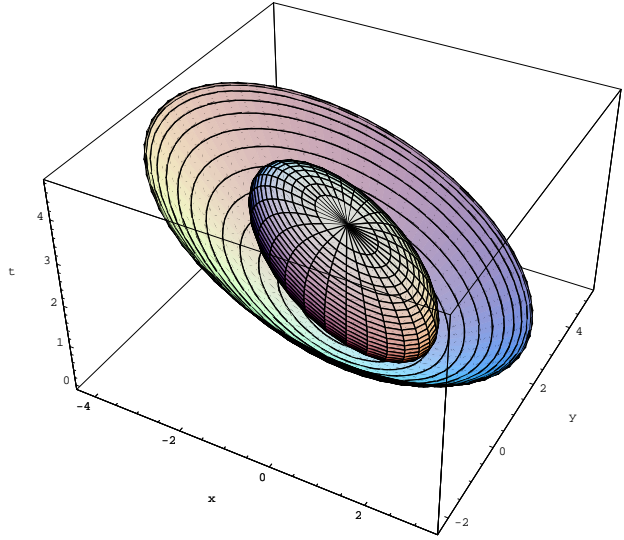}
\caption{The tangent ball at a non-characteristic point of the horizontal plane $t=0$.}
\end{center}
\end{figure}

Suppose now we are given a point $\overline g = (\overline z,0)\in
\Pi_0$. For any given $\lambda \in \R$ we want to find all solutions
$z$ of the equation
\begin{equation}\label{curves}
z + \lambda z^\perp = \overline z.
\end{equation}
The matrix of this system is
\[
A_\lambda = \begin{pmatrix} 1 & \lambda
\\
- \lambda & 1
\end{pmatrix}
\]
whose determinant is $1+\lambda^2>0$, and thus $A_\lambda$ is
invertible, and one has
\[
A_\lambda^{-1}\ =\ \begin{pmatrix} \frac{1}{1+\lambda^2} &
 - \frac{\lambda}{1+\lambda^2}
 \\
\frac{\lambda}{1+\lambda^2} & \frac{1}{1+\lambda^2}
\end{pmatrix}.
\]
From this we easily find that \eqref{curves} admits a unique
solution given by
\begin{equation}\label{curves2}
z(\lambda) = \frac{1}{1+\lambda^2} \overline z -
\frac{\lambda}{1+\lambda^2} \overline z^\perp.
\end{equation}
When $\lambda> 0$ then equation \eqref{tpsi}  allows to find the
$t$-coordinate of the point $g(\lambda) = (z(\lambda),t(\lambda))\in
H_0$ having the property that
\begin{equation*}
d(g(\lambda),\overline g) = d(g(\lambda),\Pi_0).
\end{equation*}
Such $t$-coordinate is given by the equation 
\begin{equation*}
 t(\lambda) = \frac{|z(\lambda)|^2}{2}\ \Psi(\lambda)
= \frac{\Psi(\lambda)}{2(1+\lambda^2)} |\overline z|^2,
\end{equation*}
where $\Psi(\lambda)$ is as defined in \eqref{Psi}.

Summarizing, given $\overline g=(\overline z,0)\in \Pi_0$ and $\lambda>0$ the corresponding point $g(\lambda)$ that
admits $\overline g$ as the point that realizes its gauge distance to $\Pi_0$   is given by
\begin{equation}\label{gl}
g(\lambda) = (z(\lambda),t(\lambda)) =
\left(\frac{1}{1+\lambda^2}\ \overline z -
\frac{\lambda}{1+\lambda^2} \overline z^\perp,
\frac{\Psi(\lambda)}{2(1+\lambda^2)} |\overline z|^2\right).
\end{equation}

\begin{thrm}\label{T:inside-cha}
Let $\Om \subset \Hn$ be a (Euclidean) $C^{1,1}$ domain. Suppose that
the characteristic hyperplane $\Pi_0$ is tangent to $\p\Om$  at a non-characteristic point
$\overline g=(\overline z, 0)\in \partial\Om$ where $\overline z\not=0$ in such a way that $<\nu(\overline g), e_{2n+1}>=1$,
where $\nu(\overline g)$ denotes the unit inward normal to $\p\Om$ at $\overline g$ and $e_{2n+1}=(0,0,\dots,0,1)\in \mathbb{R}^{2n+1}$.
Then there exists a $\lambda_0>0$ depending on
$|\overline z|$ and  the $C^{1,1}$ character
of $\Om$, such that for  every $0<\lambda<\lambda_0$, one has \[
B(g(\lambda),R(\lambda)) \subset  \Om,\ \ \ \p \Om \cap
\overline B(g(\lambda),R(\lambda)) = \{\overline g\}, \] where
$\{g(\lambda)\}_{0<\lambda<\lambda_0}$ is given by \eqref{gl} and $R(\lambda)= d(g(\lambda), \overline g)$.
\end{thrm}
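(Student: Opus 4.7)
The plan is to imitate the strategy of Theorem \ref{L:vptb}, but with the geometry now governed by a horizontal (characteristic) tangent plane rather than a vertical one. First, I will apply Lemma \ref{L:inv} with a unitary $U(n)$-rotation to reduce to the case $\overline z = (r, 0, \ldots, 0) \in \mathbb{R}^{2n}$ with $r = |\overline z|$; this preserves both $\Pi_0$ and the gauge distance. Using the $C^{1,1}$ regularity of $\partial\Omega$ and the hypothesis $\nu(\overline g) = e_{2n+1}$, I represent $\partial\Omega$ locally near $\overline g$ as a graph $t = \phi(z)$ with $\phi(\overline z) = 0$, $\nabla\phi(\overline z) = 0$, and $|\phi(z)| \leq A|z - \overline z|^2$ on $|z - \overline z| < r_0$, with $\Omega$ locally equal to $\{t > \phi(z)\}$. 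Here $A$ and $r_0$ depend only on the $C^{1,1}$ character of $\Omega$.

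By Lemma \ref{L:tangball}, for every $\lambda > 0$ the gauge ball $B(g(\lambda), R(\lambda))$ is tangent to $\Pi_0$ at $\overline g$ and contained in $\overline{H_0}$. I parametrize its lower half (in the $t$-direction) over the Euclidean projection onto $\mathbb{R}^{2n}$ as $t = T_\lambda(z)$; from the formula \eqref{hgl2} this graph is
\[
T_\lambda(z) = t(\lambda) - \tfrac{1}{2}\langle z(\lambda)^\perp, z\rangle - \tfrac{1}{4}\sqrt{R(\lambda)^4 - |z - z(\lambda)|^4}.
\]
The theorem will follow once I establish $T_\lambda(z) > \phi(z)$ for every $z \neq \overline z$ in the Euclidean projection of the ball, for all sufficiently small $\lambda$.

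The heart of the argument is the uniform lower bound
\[
T_\lambda(z) \geq \frac{|z - \overline z|^2}{4\lambda}
\]
throughout the Euclidean projection. To establish it, set $w = z - \overline z$ and $a = \langle w, z(\lambda)^\perp\rangle$, and exploit the identity $\overline z - z(\lambda) = \lambda z(\lambda)^\perp$ (immediate from \eqref{gl}) to obtain $|z - z(\lambda)|^2 = |w|^2 + 2\lambda a + \lambda^2 r^2/(1+\lambda^2)$. After rewriting the graph as $T_\lambda(z) = \alpha - a/2 - \sqrt{\alpha^2 - X}$ with $\alpha = \lambda^3 r^2/[4(1+\lambda^2)]$ and $X$ collecting the resulting quadratic and quartic terms in $w$, I apply the concavity inequality $\sqrt{\alpha^2 - X} \leq \alpha - X/(2\alpha)$, which holds for every $X \leq \alpha^2$. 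A short algebraic simplification produces an exact cancellation of the $a$-terms and leaves $|w|^2/(4\lambda)$ together with a manifestly nonnegative quadratic remainder.

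With this estimate in hand, I choose $\lambda_0 = \min\{1/(4A),\, c r_0/r\}$ for a suitable small constant $c$, ensuring both that $1/(4\lambda) > A$ holds and that the Euclidean projection of $B(g(\lambda), R(\lambda))$ stays inside $\{|z - \overline z| < r_0\}$. Then for every $z \neq \overline z$ in the projection
\[
T_\lambda(z) - \phi(z) \geq |w|^2 \left( \tfrac{1}{4\lambda} - A\right) > 0,
\]
so the lower half of the ball strictly dominates $\phi$; the entire open ball therefore lies in $\Omega$, with its closure meeting $\partial\Omega$ only at $\overline g$. The main obstacle is the global lower bound on $T_\lambda$ above: the Hessian of $T_\lambda$ at $\overline z$ blows up like $1/\lambda$ as $\lambda \to 0$, and this divergence is precisely what permits the comparison against a fixed $C^{1,1}$ quadratic bound. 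The delicate aspect is to upgrade this infinitesimal statement into a sharp inequality valid throughout the entire projection of the ball (where $|w|$ itself can range up to $O(\lambda)$), which forces the careful use of the concavity estimate together with the identity $\overline z - z(\lambda) = \lambda z(\lambda)^\perp$ so that the linear-in-$w$ terms cancel exactly.
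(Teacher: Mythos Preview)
Your argument is correct and takes a genuinely different route from the paper's. Both proofs begin identically: represent $\partial\Omega$ locally as a graph $t=\phi(z)$ with $|\phi(z)|\le A|z-\overline z|^2$, write the lower hemisphere of the gauge ball as a graph $t=T_\lambda(z)$ (the paper calls it $\Phi_\lambda$), and reduce to showing $T_\lambda(z)>A|z-\overline z|^2$ over the Euclidean projection $\{|z-z(\lambda)|<R(\lambda)\}$. From that point on the two arguments diverge. The paper parametrizes the projection via $z=z(\lambda)+sR(\lambda)\omega$ with $0\le s\le1$, $|\omega|=1$, expands everything in $(s,\omega)$, and pushes the desired inequality through a chain of reductions (their (8.9)--(8.11)) using Cauchy--Schwarz; the final step is the elementary inequality $\lambda(1-s^4)^{1/2}+s^2\le(1+\lambda^2)^{1/2}$, yielding $\lambda_0=1/(4A)$. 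Your approach instead exploits the algebraic identity $\overline z-z(\lambda)=\lambda\,z(\lambda)^{\perp}$ and the concavity bound $\sqrt{\alpha^2-X}\le\alpha-X/(2\alpha)$ to obtain, after the exact cancellation of the $a$-terms, the clean global estimate $T_\lambda(z)\ge |z-\overline z|^2/(4\lambda)$ plus the nonnegative remainder $(|w|^2+2\lambda a)^2/(32\alpha)$. This is more direct and gives a quantitative lower bound that makes the comparison with $A|w|^2$ immediate. Two small remarks: the $U(n)$-rotation you perform at the outset is never actually used (your computation only needs $r=|\overline z|$ and the identity above, both rotation-invariant); and the paper's parametrization makes the dependence of $\lambda_0$ on $r_0/r$ implicit, whereas you track it explicitly, which is a plus.
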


\begin{proof}
To prove this lemma we locally describe the boundary of $\Om$ as a
graph over the  hyperplane $H_0$. This means that we can find $r_0>0$
sufficiently small, and a $C^{1,1}$ function $$\phi :\left \{z\in
\R^{2n}\mid |z-\overline z|<r_0 \right \} \to \R,$$
such that $\phi(\overline
z) = 0$, $D\phi(\overline z) = 0$, and for which
$$\p \Om \cap \left\{(z,t)\in \Hn\mid |z-\overline z|<r_0, |t|<r_0^2\right \}$$
is given by
$$\left\{(z,t)\in \Hn \mid t= \phi(z),\ |z-\overline z|<r_0, |t|<r_0^2  \right\}, $$
whereas, by the assumption $<\nu(\overline g), e_{2n+1}>=1>0$, the set
$$\Om \cap \left \{(z,t)\in \Hn\mid |z-\overline z|<r_0, |t|<r_0^2 \right\}$$
is given by
$$\left \{(z,t)\in \Hn \mid t> \phi(z),\ |z-\overline z|<r_0, |t|<r_0^2 \right \}.$$
Notice from \eqref{vf2} that $X (\phi-t)(\overline z,0) = D\phi(\overline z)
+ \overline z^\perp/2 =  \overline z^\perp/2 \not= 0$, thanks to
the assumption $\overline z\not= 0$. Notice also that in view of the
assumption that $\phi\in C^{1,1}$ the graph of $\phi$ is contained
between two paraboloids, i.e., there exists a constant $A>0$ such that
\begin{equation}\label{parabola}
- A |z -\overline z|^2 \leq  \phi(z) \leq\ A |z -\overline z|^2,\ \ \ \text{for every}\  |z-\overline z| < r_0. \end{equation}

Next, consider the  ball $B(g(\lambda),R(\lambda))$ centered at
$g(\lambda)$ and with radius $R(\lambda) = d(g(\lambda),\overline
g)$. We note that
\begin{equation*}
R(\lambda) =
\frac{\psi(\lambda)}{\sqrt{1 + \lambda^2}} |\overline z| =
\frac{\lambda}{(1 + \lambda^2)^{1/4}} |\overline z|.
\end{equation*}

As it was proved before such ball is tangent to $H_0$ at $\overline g$. Its
boundary is described by the equation
\begin{equation}\label{ovaloid}
|z-z(\lambda)|^4 + 16 \left(t - t(\lambda) +
\frac{1}{2}<z(\lambda)^\perp,z>\right)^2 = R(\lambda)^4,
\end{equation}
or equivalently
\[
\left|t - t(\lambda) + \frac{1}{2}<z(\lambda)^\perp,z>\right| =
\frac{1}{4} \sqrt{R(\lambda)^4 - |z-z(\lambda)|^4 }.
\]
Since at $\overline g$ we have
\[
t - t(\lambda) + \frac{1}{2}<z(\lambda)^\perp,z> = - t(\lambda) +
\frac{1}{2}<z(\lambda)^\perp,\overline z> = -
\frac{\lambda^3}{4(1+\lambda^2)} |\overline z|^2 < 0,
\]
a local description of $\p B(g(\lambda),R(\lambda))$ near $\overline
g$ is given by
\[
t  =  \Phi_\lambda(z) = t(\lambda) -
\frac{1}{2}<z(\lambda)^\perp,z> - \frac{1}{4} \sqrt{R(\lambda)^4 -
|z-z(\lambda)|^4 }.
\]
Such representation is valid for all points $(z,t)$ which are below
the horizontal plane $H_{g(\lambda)}$ passing through $g(\lambda)$.
Since the equation of such plane is given by \[ t  = t(\lambda) -
\frac{1}{2}<z(\lambda)^\perp,z>,
\]
it is clear from \eqref{ovaloid} that the projection onto $H_0$ of
the intersection of $B(g(\lambda),R(\lambda))$ with $H_{g(\lambda)}$
is given by the $2n$-dimensional Euclidean ball $B_e(z(\lambda),R(\lambda)) =
\{z \in \R^{2n}\mid |z - z(\lambda)| < R(\lambda)\}$. In view of
\eqref{parabola} it will thus suffice to show that
\begin{equation}\label{ovabpar}
\Phi_\lambda(z)\ >\ A |z - \overline z|^2\ ,\ \ \  \text{for every}\
|z - z(\lambda)|<R(\lambda)\ .
\end{equation}

Now, for every $z$ such that $|z - z(\lambda)|<R(\lambda)$ there
exist $0\leq s \leq 1$ and $\omega$ such that $|\omega|=1$ for which
$z =  z(\lambda) + s R(\lambda) \omega$. We thus have
\[
|z - \overline z|^2 = |z(\lambda) - \overline z|^2 + s^2
R(\lambda)^2 + 2 s R(\lambda) <z(\lambda) - \overline z, \omega>.
\]
From \eqref{gl} we obtain
\begin{equation}\label{zlambdaz}
z(\lambda) - \overline z = - \frac{\lambda}{1+\lambda^2}
(\overline z^\perp + \lambda \overline z),
\end{equation}
and therefore
\begin{equation}\label{zlambdaz2}
|z(\lambda) - \overline z|^2\ =\ \frac{\lambda^2}{1+\lambda^2}
|\overline z|^2\ .
\end{equation}
Substituting \eqref{zlambdaz}, \eqref{zlambdaz2} in the above
equation we find
\begin{align}\label{zzbar}
|z - \overline z|^2 & = \frac{\lambda^2}{1+\lambda^2} |\overline
z|^2 + s^2 \frac{\lambda^2}{(1 + \lambda^2)^{1/2}} |\overline z|^2
\\
& - 2 s \frac{\lambda^2}{(1+\lambda^2)(1 + \lambda^2)^{1/4}}
|\overline z|^2 <\frac{\overline z^\perp}{|\overline z|} + \lambda
\frac{\overline z}{|\overline z|}, \omega>.\notag
\end{align}
Next, we recall that \[ t(\lambda) =
\frac{\Psi(\lambda)}{2(1+\lambda^2)} |\overline z|^2 =
\frac{\lambda (1 + \frac{\lambda^2}{2})}{2(1+\lambda^2)} |\overline
z|^2.
\]
Keeping in mind \eqref{gl} we find
\[
<z(\lambda)^\perp,z> = s R(\lambda) <z(\lambda)^\perp,\omega> =
\frac{s R(\lambda) |\overline z|}{1+\lambda^2} <\frac{\overline
z^\perp}{|\overline z|} + \lambda \frac{\overline z}{|\overline z|},
\omega>.
\]
From the latter two equations and from \eqref{zzbar} we conclude
that proving \eqref{ovabpar} is equivalent to proving that for every
$A>0$ there exists $\lambda(A)>0$ such that for every
$0<\lambda<\lambda(A)$, and for every $0\leq s\leq 1$ and $|\omega|
= 1$,
\begin{align}\label{ki}
&\frac{\lambda (1 + \frac{\lambda^2}{2})|\overline z|^2}{2(1+\lambda^2)}
 - \frac{s \lambda |\overline z|^2}{2(1+\lambda^2)(1
+ \lambda^2)^{1/4}} <\frac{\overline z^\perp}{|\overline z|} +
\lambda \frac{\overline z}{|\overline z|}, \omega> - \frac{\lambda^2
|\overline z|^2 (1 - s^4)^{1/2}}{4 (1 + \lambda^2)^{1/2}} 
\\
&  > A \left\{\frac{\lambda^2 |\overline z|^2}{1+\lambda^2}  +
\frac{s^2 \lambda^2 |\overline z|^2}{(1+\lambda^2)^{1/2}}   -
\frac{2 s \lambda^2 |\overline z|^2}{(1+\lambda^2)(1 +
\lambda^2)^{1/4}} <\frac{\overline z^\perp}{|\overline z|} + \lambda
\frac{\overline z}{|\overline z|}, \omega> \right\}. \notag
\end{align}

Establishing \eqref{ki} is in turn equivalent to proving

\begin{align}\label{ki2}
&2 + \lambda^2 - \frac{2s (1 - 4 A \lambda)}{(1 + \lambda^2)^{1/4}}
<\frac{\overline z^\perp}{|\overline z|} + \lambda \frac{\overline
z}{|\overline z|}, \omega> - \lambda (1 + \lambda^2)^{1/2} (1 -
s^4)^{1/2}\\
&  > 4 A \lambda\ \big[1 + s^2 (1+\lambda^2)^{1/2}\big].\notag
\end{align}

At this point we let $\lambda(A) = \frac{1}{4A}$, then it is clear that
$1-4A\lambda>0$ for every $0<\lambda<\lambda(A)$. Since
$\left|\frac{\overline z^\perp}{|\overline z|} + \lambda
\frac{\overline z}{|\overline z|}\right| = (1 + \lambda^2)^{1/2}$,
by Cauchy-Schwarz inequality it is clear that, provided that
$0<\lambda<\lambda(A)$,  for \eqref{ki2} to hold it suffices to
have
\begin{align*}
 2 + \lambda^2 - 2s (1 - 4 A \lambda)(1 + \lambda^2)^{1/4} -
\lambda (1 + \lambda^2)^{1/2} (1 - s^4)^{1/2}
 > 4 A \lambda\ \big[1 + s^2 (1+\lambda^2)^{1/2}\big].\notag
\end{align*}
or equivalently,
\begin{align}\label{ki3}
& 2 + \lambda^2 - 2s (1 + \lambda^2)^{1/4} - \lambda (1 +
\lambda^2)^{1/2} (1 - s^4)^{1/2} \\
& >\ 4 A \lambda\ \big [1 + s^2 (1+\lambda^2)^{1/2} - 2 s (1 +
\lambda^2)^{1/4}\big ].\notag
\end{align}
Since the quantity in the square brackets in right-hand side of
\eqref{ki3} is positive (it is a square), considering the fact that
$4A\lambda <1$, for \eqref{ki3} to hold it suffices that the
inequality
\[
2 + \lambda^2 - 2s (1 + \lambda^2)^{1/4} - \lambda (1 +
\lambda^2)^{1/2} (1 - s^4)^{1/2}\ \geq\ 1 + s^2 (1+\lambda^2)^{1/2}
- 2 s (1 + \lambda^2)^{1/4},
\] does hold for every $0<\lambda<\lambda(A)$ and every $0\leq s
\leq 1$. This inequality, however, is equivalent to the inequality
\[
\lambda (1 - s^4)^{1/2} + s^2  \leq (1 + \lambda^2)^{1/2}.
\]
The validity of this latter inequality now follows by applying
Cauchy-Schwarz inequality to the vectors $a = (\lambda,1),\  b = ((1 -
s^4)^{1/2},s^2)$, and noting that $|a| = (1 + \lambda^2)^{1/2},\  |b|= 1$.

\end{proof}

In the next lemma we  establish the quasi-segment property with respect to the gauge distance  along the path $g(\lambda)$.

\begin{lemma}\label{L:edfb}
Let $g_0 = (z_0,t_0)\in H_0$ with $z_0\not= 0$, and $\{g(\lambda)\}_{\lambda\geq 0}\ =\ \{(z(\lambda),t(\lambda))\}_{\lambda\geq 0}$
be as in \eqref{gl} with $\overline g=\overline g_0 = (z_0+ \lambda_0 z_0^{\perp},0)$ and $\lambda_0=G\left(\frac{2t_0}{|z_0|^2}\right)$.
Then there exists $\overline \lambda>0$ such that if
$\lambda_0<\lambda_1<\overline \lambda$ then
\[
d(g(\lambda_1),\overline g_0) - d(g(\lambda_1),g_0) \geq \frac{1}{2}
d(g_0,\overline g_0).
\]
\end{lemma}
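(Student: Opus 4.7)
The plan is to combine a monotonicity property of $R(\lambda) := d(g(\lambda),\overline g_0)$ with a simple continuity argument for $d(g(\lambda_1),g_0)$ as $\lambda_1 \to \lambda_0^+$. The key preliminary observation is that $g(\lambda_0) = g_0$, which one verifies directly from \eqref{gl}: expanding $\overline z_0 - \lambda_0 \overline z_0^\perp$ with $\overline z_0 = z_0 + \lambda_0 z_0^\perp$ and $z_0^{\perp\perp} = -z_0$ gives $z(\lambda_0) = z_0$, while $|\overline z_0|^2 = (1+\lambda_0^2)|z_0|^2$ combined with \eqref{tpsi} gives $t(\lambda_0) = t_0$.

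First I would establish the clean closed-form expression
\[
R(\lambda) = d(g(\lambda),\overline g_0) = \frac{|\overline z_0|\lambda}{(1+\lambda^2)^{1/4}}.
\]
This follows at once because, by construction of $g(\lambda)$ in \eqref{gl}, the point $\overline g_0$ realizes the gauge distance from $g(\lambda)$ to the hyperplane $\Pi_0$; together with \eqref{lp} (which reads $d(g,\Pi_0) = |z|\psi(\lambda)$ for $\psi(s) = s(1+s^2)^{1/4}$) and the identity $|z(\lambda)|^2 = |\overline z_0|^2/(1+\lambda^2)$ read off from \eqref{gl}, the formula above is immediate. A direct differentiation shows $R'(\lambda) > 0$ on $(0,\infty)$, so that $R(\lambda_1) \geq R(\lambda_0) = d(g_0,\overline g_0)$ whenever $\lambda_1 \geq \lambda_0$.

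Next, since $\lambda \mapsto g(\lambda)$ is continuous by \eqref{gl} and $g(\lambda_0) = g_0$, the function $\lambda_1 \mapsto d(g(\lambda_1),g_0)$ is continuous at $\lambda_0$ and vanishes there. Consequently I can choose $\overline \lambda > \lambda_0$ so that
\[
d(g(\lambda_1),g_0) \leq \frac{1}{2}\,d(g_0,\overline g_0) \qquad \text{for every } \lambda_1 \in [\lambda_0, \overline\lambda).
\]
For such $\lambda_1$, the monotonicity of $R$ then yields
\[
d(g(\lambda_1),\overline g_0) - d(g(\lambda_1),g_0) = R(\lambda_1) - d(g(\lambda_1),g_0) \geq R(\lambda_0) - \frac{1}{2}\,d(g_0,\overline g_0) = \frac{1}{2}\,d(g_0,\overline g_0),
\]
which is the desired inequality.

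The argument is essentially soft once the formula for $R(\lambda)$ is in hand. If a fully explicit $\overline\lambda$ were wanted, the main computational step would be to evaluate $d(g(\lambda_1),g(\lambda_0))$ from \eqref{hgl2} using the identities $\langle \overline z_0, \overline z_0^\perp\rangle = 0$ and $|\overline z_0^\perp| = |\overline z_0|$; a short factorization then produces the explicit identity $|z(\lambda_1) - z(\lambda_0)|^2 = |\overline z_0|^2(\lambda_1-\lambda_0)^2/[(1+\lambda_1^2)(1+\lambda_0^2)]$ with an analogous polynomial formula for the vertical component. I do not anticipate a real obstacle; the only point to be careful about is checking the monotonicity of $R$ (elementary) and ensuring the continuity step holds on a right neighborhood of $\lambda_0$ (the interval $(\lambda_0,\overline\lambda)$ must be nonempty).
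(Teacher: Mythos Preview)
Your argument is correct for the lemma exactly as stated, and it is considerably shorter than the paper's proof. You exploit two clean facts: the closed form $R(\lambda)=|\overline z_0|\lambda(1+\lambda^2)^{-1/4}$ (hence $R$ is increasing), and the continuity of $\lambda\mapsto d(g(\lambda),g_0)$ at $\lambda_0$ with value $0$. Combining $R(\lambda_1)\ge R(\lambda_0)$ with $d(g(\lambda_1),g_0)\le \tfrac12 R(\lambda_0)$ on a right neighborhood of $\lambda_0$ gives the inequality at once.

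The paper takes a very different route: it writes $d(g(\lambda_1),\overline g_0)-d(g(\lambda_1),g_0)$ as $\lambda_0\,\phi'_{\lambda_1}(\lambda^*)$ via the mean value theorem, and then shows through an explicit (and lengthy) computation that $\Phi'_{\lambda_1}(0)\le -\tfrac12|z_0|$ whenever $0\le\lambda_1<\overline\lambda$, with $\overline\lambda$ an absolute constant. The payoff of this heavier approach is \emph{uniformity}: the paper obtains a single $\overline\lambda>0$ that works for \emph{all} $0<\lambda_0<\lambda_1<\overline\lambda$, independently of $g_0$. Your continuity step, by contrast, produces an $\overline\lambda$ of size roughly $\tfrac32\lambda_0$ (since to leading order $d(g(\lambda_1),g(\lambda_0))\sim|\overline z_0|(\lambda_1-\lambda_0)$ while $\tfrac12 R(\lambda_0)\sim\tfrac12|\overline z_0|\lambda_0$), and this $\overline\lambda$ collapses to $0$ as $\lambda_0\to 0$.

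This matters downstream. In the proof of Theorem~\ref{T:eb} one must choose $\lambda_1$ with $R(\lambda_1)=2r$, where $r$ is fixed by the boundary geometry and $R(\lambda_0)=d(g,\overline g)$ can be arbitrarily small compared to $r$. Thus $\lambda_1$ may be far from $\lambda_0$, and the quasi-segment inequality is needed throughout the whole range $(\lambda_0,\overline\lambda)$ for a $\overline\lambda$ that does not shrink with $\lambda_0$. Your soft argument does not deliver this; bounding $R(\lambda_1)$ and $d(g(\lambda_1),g_0)$ separately cannot, because both are of order $|\overline z_0|\lambda_1$ and their difference is what carries the factor $\lambda_0$. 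To recover the uniform statement you would have to analyze $R(\lambda_1)-d(g(\lambda_1),g(\lambda_0))$ directly as a function of $(\lambda_0,\lambda_1)$ --- which is essentially what the paper's mean-value computation accomplishes.
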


\begin{proof}
From Lemma \ref{L:dcp} we see  that $\overline g$ is the point that  realizes the distance of $g_0
= (z_0,t_0)$ to $\Pi_0$. Moreover, by the hypothesis we have
\begin{equation}\label{connect}
\lambda_0 = G\left(\frac{2t_0}{|z_0|^2}\right),\ \ \text{or
equivalently}\ \  t_0 = \frac{|z_0|^2}{2}\ \Psi(\lambda_0).
\end{equation}

Replacing $\overline z$ with $ z_0\ +\ \lambda_0\ z^\perp_0\ $ in \eqref{curves2} we find for the
corresponding $g(\lambda) = (z(\lambda),t(\lambda))$
\begin{equation}\label{curves3}
\begin{cases}
z(\lambda) = \alpha(\lambda) z_0  + \beta(\lambda) z_0^\perp,
\\
t(\lambda) = \gamma(\lambda) |z_0|^2,
\end{cases}
\end{equation}
where
\begin{equation}\label{ab}
\begin{cases}
\alpha(\lambda)\ =\ \frac{1+ \lambda_0 \lambda}{1+\lambda^2}\ ,\ \ \
\beta(\lambda)\ =\ \frac{\lambda_0 - \lambda}{1 + \lambda^2}\ ,
\\
\gamma(\lambda)\ =\ \frac{1}{2}\ \frac{1+\lambda_0^2}{1+\lambda^2}\
\Psi(\lambda)\ .
\end{cases}
\end{equation}

Notice that
\[
\alpha(\lambda_0)\ =\ 1\ ,\ \ \beta(\lambda_0)\ =\ 0\ ,\ \
\gamma(\lambda_0)\ =\ \frac{1}{2}\ \Psi(\lambda_0)\ ,
\]
and so \eqref{curves3} and \eqref{connect} give
\begin{equation}\label{weareok}
g({\lambda_0})\ =\ (z({\lambda_0}),t({\lambda_0}))\ =\ (z_0,t_0)\ =\
g_0\ .
\end{equation}

Also notice that when $\lambda \to 0^+$, we have (recall that
$\Psi(\lambda)\to 0$ as $\lambda \to 0^+$)
\[
\alpha(\lambda)\ \to\ 1\ ,\ \ \beta(\lambda)\ \to\ \lambda_0\ ,\ \
\gamma(\lambda)\ \to\ 0\ ,\  \text{as}\ \lambda \to 0^+\ .
\]

We thus  have \begin{equation}\label{gzero} g(0)\ =\ \overline
g_0\ =\ (z_0 +\lambda_0 z_0^\perp,0)\ .
\end{equation}

 Furthermore,
\begin{equation*}
|z(\lambda)|\ =\ \sqrt{\alpha(\lambda)^2 + \beta(\lambda)^2}\ |z_0|\ ,
\end{equation*}
and that when $0< \lambda_0 <\lambda$, one has
\begin{equation*}
\alpha(\lambda)^2 + \beta(\lambda)^2\ =\ \frac{1 + \lambda_0^2
\lambda^2 + \lambda_0^2 + \lambda^2}{(1 + \lambda^2)^2}\ <\ 1\ .
\end{equation*}

We set henceforth
\[
\rho\ =\ d(g_0,\overline g_0)\ = \ d(g_0,\Pi_0) =\ d((z_0,t_0),\Pi_0)\ .
\]

 Notice that from \eqref{fd} we obtain
\begin{equation}\label{fd2}
\rho\  =\ \left(\lambda_0^4 |z_0|^4 +
16\left(t_0 - \frac{\lambda_0}{2}|z_0|^2\right)^2\right)^{1/4}\ .
\end{equation}

On the other hand, using \eqref{cc3}  we find \[ 16 \left(t_0 -
\frac{\lambda_0}{2} |z_0|^2\right)^2\ =\ |z_0|^4 \lambda_0^6\ ,
\]
and so \eqref{fd2} gives
\begin{equation}\label{fd3}
\rho\ =\ |z_0| \ \lambda_0 (1 + \lambda_0^2)^{1/4}\ .
\end{equation}

If we introduce the strictly increasing function
\[
\psi(s)\ =\ s (1 + s^2)^{1/4}\ ,\ \ \ s\geq 0
\]
as in \eqref{lp}, then it is clear from \eqref{fd3} that
\begin{equation}\label{fd4}
\lambda_0\ =\ \psi^{-1}\left(\frac{\rho}{|z_0|}\right)\ .
\end{equation}

At this point we fix a real number $\lambda_1 >\lambda_0$ and
call $g_1 = g({\lambda_1})$. Clearly, by the way we have constructed the path $\lambda \to
g(\lambda)$, the point $\overline g_1 = \overline g(\lambda_1)$
which realizes the distance of $g_1$ to $\Pi_0$ coincides with
$\overline g_0$. We have in fact
\[
\begin{cases}
\alpha(\lambda) - \lambda \beta(\lambda)\ =\ 1
\\
\lambda \alpha(\lambda) + \beta(\lambda)\ =\ \lambda_0\ ,
\end{cases}
\]
and this gives
\begin{equation*} 
z(\lambda) + \lambda
z(\lambda)^\perp\ =\ z_0 + \lambda_0 z_0^\perp\ ,\ \ \ \text{for
every}\ \lambda \geq 0\ .
\end{equation*}

We  now set
\[
\tilde g(\lambda)\ =\ g({\lambda_1 - \lambda})\ ,\ \ \ \ 0\leq
\lambda \leq \lambda_1\ ,
\]
and define a function $\phi_{\lambda_1}:[0,\lambda_1] \to [0,\infty)$ by letting
\begin{equation*}
\phi_{\lambda_1}(\lambda)\ =\ d(\tilde g(\lambda),g_1)\ =\
N(g_1^{-1} \tilde g(\lambda))\ .
\end{equation*}

Notice that if $\tilde \lambda_0 = \lambda_1 - \lambda_0$, then we
have
\[
\phi_{\lambda_1}(\tilde \lambda_0)\ =\ d(g({\lambda_0}),g_1)\ =\
d(g_0,g_1)\ ,
\]
where in the last equality we have used \eqref{weareok}. On the
other hand, from \eqref{gzero} we have
\[
\phi_{\lambda_1}(\lambda_1)\ =\ d(\tilde g({\lambda_1}),g_1)\ =\
d(\overline g_0,g_1)\ .
\]

From the mean value theorem we thus obtain for some $\tilde
\lambda_0 < \lambda^* < \lambda_1$
\begin{eqnarray*}
d(g_1,\overline g_0) - d(g_1,g_0)\ &=& \ \phi_{\lambda_1}(\lambda_1) -
\phi_{\lambda_1}(\tilde \lambda_0)\ =\ (\lambda_1 - \tilde
\lambda_0) \phi'_{\lambda_1}(\lambda^*) \\
\ &=& \ \lambda_0
\phi'_{\lambda_1}(\lambda^*)\ =\
\psi^{-1}\left(\frac{\rho}{|z_0|}\right)
\phi'_{\lambda_1}(\lambda^*)\ ,
\end{eqnarray*}
where in the last equality we have used \eqref{fd4}. Our goal is to
show that there exists $\overline \lambda>0$ sufficiently small, and
$C>0$, such that for all $0<\lambda_0<\lambda_1<\overline \lambda$ we have
\begin{equation}\label{ebb}
\psi^{-1}\left(\frac{\rho}{|z_0|}\right)  \phi'(\lambda^*)\ \geq\
\frac{1}{2}\ \rho\ .
\end{equation}

Since from \eqref{psinv2} we have
\[
\psi^{-1}\left(\frac{\rho}{|z_0|}\right)\ =\  \frac{\rho}{|z_0|} (1
+ o(1))\ ,\ \ \ \text{as}\ \frac{\rho}{|z_0|} \to 0^+\ ,
\]
we see from \eqref{fd3} that there exists $\overline r>0$ such that
\begin{equation}\label{psi}
\psi^{-1}\left(\frac{\rho}{|z_0|}\right)\ \geq\ \frac{1}{2}\
\frac{\rho}{|z_0|}\ ,\ \ \ \text{provided that}\ \lambda_0 < \overline r\ . \end{equation}

To establish \eqref{ebb} it will thus be enough to show that there
exists $\overline \lambda>0$ such that
\begin{equation}\label{phi'}
\phi'_{\lambda_1}(\lambda^*)\ \geq\ |z_0|\ >\ 0\ ,\ \ \text{for
every}\ \lambda_1 - \lambda_0 < \lambda^* < \lambda_1 \leq \overline
\lambda\ .
\end{equation}

Since from $\phi_{\lambda_1}(\lambda) =  N(g_1^{-1} g(\lambda_1 -
\lambda))$, and \[ \phi'_{\lambda_1}(\lambda^*)\ =\ -\
\frac{d}{d\lambda} N(g_1^{-1} g(\lambda)) \big|_{\lambda = \lambda_1
- \lambda^*}\ , \] it is clear that it will suffice to show that
there exist $\overline \lambda>0$ such that, if we set
$\Phi_{\lambda_1}(\lambda) = N(g_1^{-1} g(\lambda))$,
\[
\Phi'_{\lambda_1}(\lambda)\ \leq\ -\ |z_0|\ ,\ \ \text{for every} \
0\leq \lambda \leq \lambda_0 < \lambda_1 \leq \overline \lambda\ .
\]

Now we have from \eqref{curves3}
\begin{align}\label{comp}
g_1^{-1} g(\lambda) & = (- z_1, - t_1) (\alpha(\lambda) z_0 +
\beta(\lambda) z_0^\perp , \gamma(\lambda)|z_0|^2)
\\
& =\ \big((\alpha(\lambda) - \alpha(\lambda_1)) z_0 +
((\beta(\lambda) - \beta(\lambda_1)) z_0^\perp, (\gamma(\lambda) -
\gamma(\lambda_1))|z_0|^2  \notag\\
& +\ \frac{1}{2}<\alpha(\lambda) z_0 + \beta(\lambda) z_0^\perp,
z_1^\perp>\big) \notag
\\
& =\ \big((\alpha(\lambda) - \alpha(\lambda_1)) z_0 +
((\beta(\lambda) - \beta(\lambda_1)) z_0^\perp, \big(\gamma(\lambda)
-
\gamma(\lambda_1))  \notag\\
& +\ \frac{1}{2}\big(\beta(\lambda) \alpha(\lambda_1) -
\alpha(\lambda) \beta(\lambda_1)\big)\big) |z_0|^2\big) \notag
\end{align}

Using \eqref{comp} we find
\begin{align*}
\Phi_{\lambda_1}(\lambda)\ =\ \phi(\lambda_1 - \lambda) & =\
\bigg\{\bigg((\alpha(\lambda) - \alpha(\lambda_1))^2 +
(\beta(\lambda) - \beta(\lambda_1))^2
\bigg)^2  \\
& +\ 16\ \bigg( \gamma(\lambda) - \gamma(\lambda_1) +\
\frac{1}{2}\big(\beta(\lambda) \alpha(\lambda_1) - \alpha(\lambda)
\beta(\lambda_1)\big)\big)\bigg)^2 \bigg\}^{1/4}\ |z_0|\ , \notag
\end{align*}
and thus
\begin{align}\label{two}
\Phi'_{\lambda_1}(\lambda)\ & =\ |z_0|\
\bigg\{\bigg((\alpha(\lambda) - \alpha(\lambda_1))^2 +
(\beta(\lambda) - \beta(\lambda_1))^2
\bigg)^2  \\
& +\ 16\ \bigg( \gamma(\lambda) - \gamma(\lambda_1) +\
\frac{1}{2}\big(\beta(\lambda) \alpha(\lambda_1) - \alpha(\lambda)
\beta(\lambda_1)\big)\big)\bigg)^2 \bigg\}^{- 3/4}\notag\\
& \times 4\ \bigg\{\bigg((\alpha(\lambda) - \alpha(\lambda_1))^2 +
(\beta(\lambda) - \beta(\lambda_1))^2 \bigg)\notag \\
&\times \bigg((\alpha(\lambda)
- \alpha(\lambda_1)) \alpha'(\lambda) + (\beta(\lambda) -
\beta(\lambda_1)) \beta'(\lambda)\bigg) \notag\\
& +\ 8\ \bigg( \gamma(\lambda) - \gamma(\lambda_1) +\
\frac{1}{2}\big(\beta(\lambda) \alpha(\lambda_1) - \alpha(\lambda)
\beta(\lambda_1)\big)\big)\bigg)\notag \\
&\times \bigg(\gamma'(\lambda) +
\frac{1}{2} (\beta'(\lambda) \alpha(\lambda_1) - \alpha'(\lambda)
\beta(\lambda_1))\bigg)\bigg\} \ . \notag
\end{align}

Recalling that $\alpha(0) = 1$, $\beta(0) = \lambda_0$, $\gamma(0) =
0$, $\alpha'(0) = \lambda_0$, $\beta'(0) = - 1$, and $\gamma'(0) =
1/2$, we obtain from \eqref{two}, after some crucial cancellations
\begin{align}\label{three}
\Phi'_{\lambda_1}(0)\ & =\ |z_0|\ \bigg\{\bigg((1 -
\alpha(\lambda_1))^2 + (\lambda_0 + |\beta(\lambda_1)|)^2
\bigg)^2  \\
& +\ 16\ \bigg( - \gamma(\lambda_1) +\ \frac{1}{2}\big(\lambda_0
\alpha(\lambda_1) +
|\beta(\lambda_1)|\big)\big)\bigg)^2 \bigg\}^{- 3/4}\notag\\
& \times 4\ \bigg\{-\ \bigg((1 - \alpha(\lambda_1))^2 + (\lambda_0 +
|\beta(\lambda_1)|)^2 \bigg) \bigg(\lambda_0 \alpha(\lambda_1)) + |\beta(\lambda_1)|)\bigg) \notag\\
& +\ 4\ \bigg(- \gamma(\lambda_1) +\ \frac{1}{2}\big(\lambda_0
\alpha(\lambda_1) + |\beta(\lambda_1)|\big)\big)\bigg) \bigg(1 -
\alpha(\lambda_1) + \lambda_0 |\beta(\lambda_1)|\bigg)\bigg\} \notag
\end{align}

At this point we observe the following formulas which follow from
\eqref{ab} after some elementary computations
\[
(1 - \alpha(\lambda_1))^2 + (\lambda_0 + |\beta(\lambda_1)|)^2\ =\
\lambda_1^2\ \frac{1 + \lambda_0^2 \lambda_1^2 + \lambda_0^2 +
\lambda_1^2}{(1+\lambda_1^2)^2}\ .
\]
\[
\lambda_0\ \alpha(\lambda_1)\ +\ |\beta(\lambda_1)|\ =\ \lambda_1\
\frac{1+\lambda_0^2}{1+\lambda_1^2}\ ,
\]
\[
\gamma(\lambda_1)\ =\ \frac{\lambda_1}{2}\ \left(1 +
\frac{\lambda_1^2}{2}\right)\ \frac{1+\lambda_0^2}{1+\lambda_1^2}\ ,
\]
\[
-\ \gamma(\lambda_1)\ +\ \frac{1}{2}\ (\lambda_0\ \alpha(\lambda_1)\
+\ |\beta(\lambda_1)|)\ =\ -\ \frac{\lambda_1^3}{4}\
\frac{1+\lambda_0^2}{1+\lambda_1^2}\ ,
\]
\[
1 - \alpha(\lambda_1) + \lambda_0 |\beta(\lambda_1)|\ =\
\frac{\lambda_1^2 - \lambda_0^2}{1 + \lambda_1^2}\ .
\]

Substituting these equations in \eqref{three} we finally obtain
\begin{align*}
\Phi'_{\lambda_1}(0)\ & =\ - 4\ |z_0|\ (1 + \lambda_0^2)\ \frac{1 +
\lambda_0^2 \lambda_1^2 + \lambda_0^2 + \lambda_1^2 +
(1+\lambda_1^2)(\lambda_1^2-\lambda_0^2)}{[(1 + \lambda_0^2
\lambda_1^2 + \lambda_0^2 + \lambda_1^2)^2 + \lambda_1^2 (1 +
\lambda_0^2)^2 (1 +\lambda_1^2)^2]^{3/4}}
\\
& =\ - 4\ |z_0|\  \frac{(1 + \lambda_0^2) (1 + \lambda_1^2)^2}{[(1 +
\lambda_0^2 \lambda_1^2 + \lambda_0^2 + \lambda_1^2)^2 + \lambda_1^2
(1 + \lambda_0^2)^2 (1 +\lambda_1^2)^2]^{3/4}}\ . \notag
\end{align*}

Keeping in mind that $0 < \lambda_0 < \lambda_1$, we see that if
$\lambda_1 \to 0^+$, then
\[
\frac{(1 + \lambda_0^2) (1 + \lambda_1^2)^2}{[(1 + \lambda_0^2
\lambda_1^2 + \lambda_0^2 + \lambda_1^2)^2 + \lambda_1^2 (1 +
\lambda_0^2)^2 (1 +\lambda_1^2)^2]^{3/4}}\to\ 1\ .
\]

It is thus clear that there exists $\overline \lambda >0$ such that
\begin{equation}\label{lb}
\Phi'_{\lambda_1}(0)\ \leq\ -\ \frac{1}{2}\ |z_0|\ ,\ \ \
\text{provided that} \ 0 \leq \lambda_1< \overline \lambda\ .
\end{equation}

By continuity, \eqref{psi} and \eqref{lb}, if we choose $\overline \lambda>0$ sufficiently
small, we  achieve \eqref{phi'}, thus completing the proof.

\end{proof}

\section{Proof of Theorems \ref{T:eb} and \ref{T:cp}}\label{S:eb}

With the preliminary work developed in Sections \ref{S:qs1} and \ref{S:qs2}, we are finally able to establish Theorems \ref{T:eb} and \ref{T:cp}.

\begin{proof}[Proof of Theorem \ref{T:eb}]
We only discuss the case $p<Q$, leaving it to the reader to provide the appropriate modifications for the case $p=Q$. By the uniform Harnack inequality in \cite{CDG1} we know that $u>0$ in $\Om$. Let $g_0\in \p \Om$ which is not a characteristic point of $\p \Om$. For $0<r< \frac{d(g_0, \Sigma_{\Om})}{M} $, where $M$ is sufficiently large,  let $g\in \Om \cap B(g_0,r)$
and denote by $\overline g\in \p \Om$ a point such that $d(g,\p \Om)
= d(g,\overline g)$. We observe right away that
\begin{equation}\label{r}
d(g,\overline g)  < r. \end{equation}

One has in fact $d(g,\overline g) = d(g,\p \Om) \leq d(g,g_0) < r$. Moreover, $\overline g\in B(g_0, 2r)$ since
$d(\overline g, g_0)\leq d(\overline g, g) +d(g, g_0) < 2r$. If $M$ has been fixed large enough, this gives in particular that $\overline g\not\in \Sigma_\Om$.
We now let $\Pi(\overline g)$ be the hyperplane tangent to $\partial \Om$ at $\overline g$. There are two possibilities: either $\Pi(\overline g)$ has a characteristic point, or it does not.

\noindent \textbf{Case 1)} If
$\Pi(\overline g)$ has no characteristic point then it must have the form
$$\Pi(\overline g)=\{(x,y,t )\in \Hn \mid <a,x>+<b,y> + d=0 \},$$
where $a, b \in \Rn$ and $ d \in \R $. By a left-translation we may assume without loss of generality that $\Pi(\overline
g)$ takes the form
\[
\Pi_\omega=\{(x,y,t )\in \Hn \mid <a,x>+<b,y> =0 \},
\]
where $\omega = (a,b)\in \R^{2n}$. We emphasize that, thanks to \eqref{isometry}, left-translations do not change the gauge distance $d$, so that we can assume from the start that $\Pi(\overline g)$ is in the form $\Pi_\omega$.  Furthermore, without loss of generality we can assume that $|\omega|^2 = |a|^2 + |b|^2 = 1$.
And that the point $\overline g$ at which the domain $\Om$ and the hyperplane $\Pi_\omega$ touch is a non-characteristic point for $\p \Om$. 

\noindent \textbf{Case 2)} If instead $\Pi(\overline g)$ has a characteristic
point, then it is of the type
$$\Pi(\overline g)=\{(x,y,t )\in \Hn \mid <a,x>+<b,y> + c\, t + d=0 \},$$
where $c\not=0$. Notice that its characteristic point is 
\[
g_0 = \left(-\frac{2b}{c},\frac{2a}{c},-\frac{d}{c}\right)\in \Pi(\overline g).
\]
If in this case we left-translate  $\Pi(\overline g)$ by the point $g_0^{-1}$, it is easy to recognize that the hyperplane $\Pi(\overline g)$ is mapped into 
$\Pi_0=\{(x,y,t)\in \Hn\mid t>0\}$. 
Invoking Theorem \ref{L:vptb} and Lemma \ref{L:vp} in Case 1), and Theorem \ref{T:inside-cha} and Lemma \ref{L:edfb} in Case 2), we can construct
a quasi-segment $\{g(\lambda)\}_{0<\lambda<\overline \lambda}$ such that for every
$0<\lambda<\overline\lambda$,
\[
B(g(\lambda),R(\lambda))\ \subset \ \Om,\ \ \ \p \Om \cap
\overline B(g(\lambda),R(\lambda)) = \{\overline g\},
\]
where $R(\lambda)= d(g(\lambda), \overline g)$. 

\begin{figure}[h]
\begin{center}
\includegraphics[width=82mm]{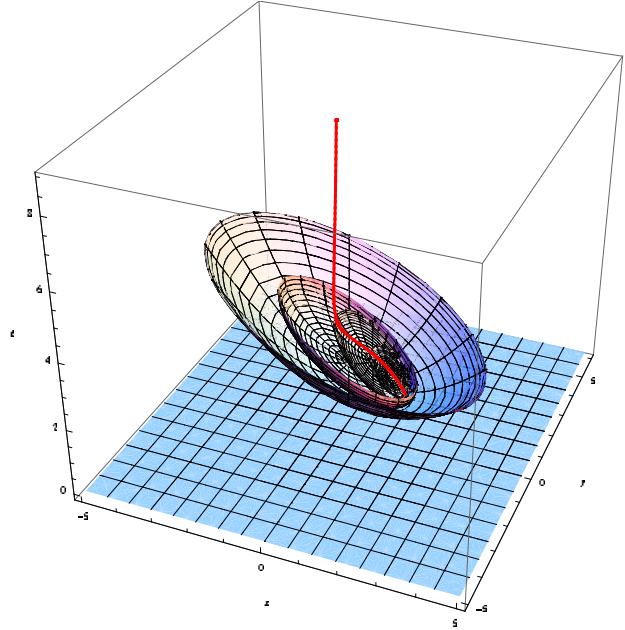}
\caption{Tangent balls and a quasi-segment at a non-characteristic point of the horizontal plane $t=0$.}
\end{center}
\end{figure}

Moreover, by \eqref{r} we may also assume that there exist $0<\lambda_0<\lambda_1<\overline\lambda$ such that
$g=g(\lambda_0)$ and  $R(\lambda_1)=2r$. Thus with  $g_1=g(\lambda_1)$ we have
$B(g_1,2r)\subset \Om$, $\overline g \in \p
B(g_1,2r)$ and
\begin{equation}\label{qsd}
d(g_1, \overline g)-d(g_1, g)\geq \frac{1}{2}d(g,\overline g),
\end{equation}
by the segment or quasi-segment property.

Since by Theorem \ref{T:MM}  any (Euclidean) $C^{1,1}$ domain in $\Hn$ is NTA, every boundary point of $\Om$ has attached to it a non-tangential corkscrew, see Definition \ref{D:NTA}. Now, it is clear that $g_1$ is a non-tangential corkscrew associated with
$\overline g$. Furthermore, for any $0<\epsilon<1$, $g'\in B(g_1,\epsilon \, r)$ 
and any $g''\in \p \Om$ one has
\[
2 r \leq d(g'',g_1) \leq d(g'',g') + d(g',g_1) \leq
d(g'',g') + \epsilon \, r,
\]
and also
\[ d(g', \p \Om) \leq d(g',\overline g) \leq d(g',g_1) +
d(g_1,\overline g)  =  d(g',g_1)+ 2r \leq (2 + \epsilon) r. \]

These inequalities imply that
\[
(2-\epsilon) r \leq d(B(g_1,\epsilon \, r),\p \Om)  \leq  (2 +
\epsilon) r.
\]

By the Harnack inequality in \cite{CDG1} we obtain for some
$C_\epsilon>0$
\begin{equation}\label{eb1}
\frac{u(g')}{u(g_1)} \geq C_\epsilon > 0,\ \ \text{for every}\
g'\in \overline B(g_1,\epsilon \, r).
\end{equation}

Since $g_1$ is a corkscrew for $\overline g$, and $A_r(g_0)$ is a
corkscrew for $g_0$, and $d(\overline g,g_0)<2r$, we see that there
exists a Harnack chain of $N$ balls (with $N$ independent of $r$)
connecting $g_1$ to $A_r(g_0)$. A repeated application of the
Harnack inequality thus gives \[ u(g_1)  \leq C^N u(A_r(g_0)).
\]

From this estimate and \eqref{eb1} we obtain
\begin{equation}\label{eb11}
\frac{u(g')}{u(A_r(g_0))} \geq C_\epsilon > 0,\ \ \text{for
every}\ g'\in \overline B(g_1,\epsilon \, r).
\end{equation}

We now consider the function
\[
f(g')\ =\ \frac{d(g',g_1)^{(p-Q)/(p-1)} - (2r)^{(p-Q)/(p-1)}}{(\epsilon \,
r)^{(p-Q)/(p-1)} - (2r)^{(p-Q)/(p-1)}}\ =\ \frac{
d(g',g_1)^{(p-Q)/(p-1)} - d(\overline
g,g_1)^{(p-Q)/(p-1)}}{(\epsilon \, r)^{(p-Q)/(p-1)} -
(2r)^{(p-Q)/(p-1)}}\]
on the  ring $B(g_1, 2r) \setminus \overline B(g_1,\epsilon \, r)$, and we
clearly have $f \equiv 1$ on $\p B(g_1,\epsilon \, r)$ and $f\equiv  0$
on $\p B(g_1, 2r)$. By \eqref{eb11} and the comparison principle
(Theorem \ref{T:ct}) we conclude that
\begin{equation}\label{eb2}
\frac{u(g')}{u(A_r(g_0))}\ \geq\ C\ f(g')\ ,\ \ \text{for every}\
g'\in \overline B(g_1, 2r) \setminus  B(g_1,\epsilon \, r)\ .
\end{equation}

Now, thanks to \eqref{qsd},  $g\in \overline B(g_1,2r) \setminus
B(g_1,\epsilon \, r)$ and hence we  obtain from \eqref{eb2}
\begin{equation*}
\frac{u(g)}{u(A_r(g_0))}\ \geq\ C\ f(g)\ .
\end{equation*}

It thus suffices to prove that
\[
f(g)\ \geq\ C\ \frac{d(g,\overline g)}{r} \ .
\]

Applying the mean value theorem to the function $h(s) =
s^{(p-Q)/(p-1)}$ with $\epsilon \, r\leq s \leq 2r$, and using the fact that $0<\ep<1$ we find
\[
f(g)\ \geq \ C(Q, p)\ \frac{d(g_1,\overline g) - d(g_1,g)}{r}\ .
\]

At this point we apply the quasi-segment property \eqref{qsd} to get the desired estimate.
This completes the proof of the theorem.

\end{proof}

\begin{proof}[Proof of Theorem \ref{T:cp}]
Combine Theorem \ref{T:eaADP} with Theorem \ref{T:eb}.

\end{proof}

\vskip 0.6in


\begin{thebibliography}{99}

\bibitem[AS]{AS}
H. Aikawa \& N. Shanmugalingam, \emph{Carleson-type estimates for
$p$-harmonic functions and the conformal Martin boundary of John
domains in metric measure spaces}, Michigan Math. J. \textbf{53}~
(2005), 165--188.

\bibitem[AKSZ]{AKSZ}
H. Aikawa, T. Kilpel\"ainen, N. Shanmugalingam \& X. Zhong,
\emph{Boundary Harnack principle for $p$-harmonic functions in
smooth Euclidean domains}, Potential Anal. \textbf{26}~(2007), no.
3, 281-301.

\bibitem[Be]{Be}
R. Berndt, \emph{An introduction to symplectic geometry}, Grad. Studies in Math., Vol. 26, AMS, 2001.

\bibitem[Bo]{B}
  J. M. Bony, \emph{Principe du maximum, in\'egalit\'e de Harnack et unicit\'e du probl\`eme de Cauchy pour les operateurs elliptique degeneres}, Ann. Inst. Fourier, Grenoble, \textbf{19}~(1969), 277--304.

\bibitem[CFMS]{CFMS}
 L. Caffarelli, E. Fabes, S. Mortola \& S. Salsa, \emph{Boundary behavior of nonnegative solutions of elliptic operators in divergence form}, Indiana Univ. Math. J. \textbf{30} (1981), 621--640.


\bibitem[CDG1]{CDG1}
L. Capogna, D. Danielli \& N. Garofalo,  \emph{An embedding theorem and the Harnack inequality for nonlinear subelliptic equations}, Comm. Partial Differential Equations \textbf{18}~(1993),  1765--1794.

\bibitem[CDG2]{CDG2}
\bysame, \emph{Capacitary
estimates and the local behavior of solutions of nonlinear subelliptic equations}, Amer. J. Math. \textbf{118} (1996),  1153--1196.


\bibitem[CG]{CG}
  L. Capogna \& N. Garofalo, \emph{Boundary behavior of nonegative solutions of subelliptic equations in NTA domains for Carnot-Carath\'eodory metrics}, J. Fourier Anal. Appl.
 \textbf{4}~(1998), 403--432.


\bibitem[CGN1]{CGN1}
 L. Capogna, N. Garofalo \& D. M. Nhieu, \emph{A version of a theorem of Dahlberg for the subelliptic Dirichlet problem}, Math. Res. Letters
 \textbf{5} (1998), 541--549.

\bibitem[CGN2]{CGN2}
\bysame, \emph{Properties of harmonic measures in the Dirichlet
problem for nilpotent Lie groups of Heisenberg type}, Amer. J. Math.
\textbf{124}~(2002),  273--306.

\bibitem[CGN3]{CGN3}
\bysame, \emph{Mutual absolute continuity of harmonic and surface measures for H\"ormander type operators}, Perspectives in partial differential equations, harmonic analysis and applications, 49--100, Proc. Sympos. Pure Math., \textbf{79}, Amer. Math. Soc., Providence, RI, 2008. 

\bibitem[Ca]{Ca}
G. Cardano, \emph{Artis Magnae}, 1545.

\bibitem[Ci]{Citti}
  G. Citti, \emph{Wiener estimates at boundary points for H\"ormander's operators}, Boll. U.M.I. \textbf{2-B}~(1988), 667--681.




\bibitem[Cy]{Cy}
J. Cygan, \emph{Subadditivity of homogeneous norms on certain nilpotent Lie groups}, Proc. Amer. Math. Soc. \textbf{83}~(1981), 69--70.




\bibitem[Da]{D}
  D. Danielli, \emph{Regularity at the boundary for solutions of nonlinear subelliptic
equations}, Indiana Univ. Math. J. \textbf{44}~ (1995), 269--286.

\bibitem[DG]{DG}
D. Danielli \& N. Garofalo, \emph{Green functions in nonlinear
potential theory in Carnot groups and the geometry of their level
sets}, preprint, 2004.

\bibitem[DGN]{DGN}
D. Danielli, N. Garofalo, Nicola \& D. M. Nhieu, \emph{Notions of
convexity in Carnot groups}, Comm. Anal. Geom. \textbf{11}~(2003),
 263--341.


\bibitem[F1]{Fic1}
G. Fichera, \emph{Sulle equazioni alle derivate parziali del secondo ordine ellittico-paraboliche}, (Italian) Univ. e Politec. Torino. Rend. Sem. Mat. \textbf{15}~(1955--56), 27--47.

\bibitem[F2]{Fic2}
\bysame, \emph{On a unified theory of boundary value problems for elliptic-parabolic equations of second order}. 1960 Boundary problems in differential equations pp. 97--120, Univ. of Wisconsin Press, Madison. 


\bibitem[Fo]{Fo} 
G. Folland, \emph{Subelliptic estimates and function spaces on nilpotent Lie groups}, Ark. Math. \textbf{13}~(1975), 161--207.




\bibitem[G]{G}
N. Garofalo, \emph{Second order parabolic equations in
nonvariational form: Boundary Harnack principle and comparison
theorems for nonegative solutions}, Ann. Mat. Pura Appl. (4)
\textbf{138}~(1984), 267--296.


\bibitem[GN]{GN}
  N. Garofalo \& D. M. Nhieu, \emph{Lipschitz continuity, global smooth approximations and extension theorems for Sobolev functions in Carnot-Carath\'eodory spaces}, J. d'Analyse Math. \textbf{74}~(1998), 67--97.






\bibitem[HH]{HH}
J. Heinonen \& I. Holopainen, \emph{Quasiregular maps on Carnot groups}, J. Geom. Anal. \textbf{7}~(1997),  109--148.



\bibitem[H]{H}
  H. H\"ormander, \emph{Hypoelliptic second-order differential equations}, Acta Math. \textbf{119}~(1967), 147--171.


\bibitem[Ja]{Ja}
U. Janfalk, \emph{Behaviour in the limit, as $p\to\infty$, of
minimizers of functionals involving $p$-Dirichlet integrals}, SIAM
J. Math. Anal. \textbf{27}~(1996),  341--360.


\bibitem[Je1]{Jer}
  D. Jerison, \emph{The Dirichlet problem for the Kohn Laplacian on the Heisenberg group, Parts I and II}, J. Funct. Analysis \textbf{43}~(1981), 97--142.

\bibitem[Je2]{Jer2}
 \bysame, \emph{Boundary regularity in the Dirichlet problem for $\square_b$ on CR manifolds}, Comm. Pure Appl. Math. \textbf{36}~(1983), 143--181.





\bibitem[JK]{JK}
  D. Jerison \& C. E. Kenig, \emph{Boundary behavior of harmonic functions in non-tangentially accessible domains}, Adv. Math. \textbf{46}~1982, 80--147.

  


\bibitem[Ke]{Ke}
 C. E. Kenig, \emph{Harmonic analysis techniques for second order elliptic boundary value problems}, Amer. Math. Soc., CBMS \textbf{83}, 1994.

\bibitem[KN1]{KN}
  J. J. Kohn \& L. Nirenberg, \emph{Non-coercive boundary value problems}, Comm. Pure and Appl. Math. \textbf{18}~(1965), 443--492.


 \bibitem[LU]{LU}
E. Lanconelli \& F. Uguzzoni, \emph{On the Poisson kernel for the
Kohn Laplacian}, Rend. Mat. Appl. (7) \textbf{17} (1997),  659--677
(1998).

\bibitem[LN]{LN}
J. Lewis \& K. Nystr\"om, \emph{Boundary behavior for $p$-harmonic functions in Lipschitz and
starlike Lipschitz ring domains}, Annales Scientifiques de LÕEcole Normale Superieure,
Volume 40, Issue 5, September-October 2007, p.765-813.


\bibitem[MM1]{MM1}
R. Monti \& D. Morbidelli, \emph{Non-tangentially accessible domains for vector fields}, Indiana Univ. Math. J. \textbf{54}~(2005),  473--498. 

\bibitem[MM2]{MM2}
\bysame, \emph{Regular domains in homogeneous
groups}, Trans. Amer. Math. Soc. \textbf{357}~(2005), 
2975--3011.


\bibitem[NS]{NS}
P. Negrini \& V. Scornazzani, \emph{Wiener criterion for a class of degenerate elliptic operators}, J. Differential Equations \textbf{66}~(1987),  151--164.


\bibitem[P]{P}
  H. Poincar\'e, \emph{Sur les equations aux d\'eriv\'ees partielles de la physique math\'ematique}, Amer. J. of Math. \textbf{12}~(1890), 211--294.


\end{thebibliography}
\end{document}